\theoremstyle{definition}
\def\fnum{equation}
\newtheorem{Thm}[\fnum]{Theorem}
\newtheorem{Cor}[\fnum]{Corollary}
\newtheorem{Lem}[\fnum]{Lemma}
\newtheorem{Con}[\fnum]{Conjecture}
\newtheorem{Def}[\fnum]{Definition}
\newtheorem{Pro}[\fnum]{Proposition}
\numberwithin{equation}{section}
\newcommand{\Vol}{{\text{Vol}}}
\newcommand{\nn}{{\bf{n}}}
\newcommand{\Ric}{{\text{Ric}}}
\newcommand{\Tr}{{\text{Tr}}}
\def\RR{{\bold R}}
\def\SS{{\bold S}}
\newcommand{\dv}{{\text {div}}}
\newcommand{\e}{{\text {e}}}
\newcommand{\cC}{{\mathcal{C}}}
\newcommand{\cF}{{\mathcal{F}}}
\newcommand{\cL}{{\mathcal{L}}}
\newcommand{\cW}{{\mathcal{W}}}
\newcommand{\cN}{{\mathcal{N}}}
\newcommand{\cP}{{\mathcal{P}}}
\newcommand{\cR}{{\mathcal{R}}}
\newcommand{\eqr}[1]{(\ref{#1})}
\def\bH{{\bold H}}
\title[Complexity of parabolic systems]{Complexity of parabolic systems}
\author[]{Tobias Holck Colding}%
\address{MIT, Dept. of Math.\\
77 Massachusetts Avenue, Cambridge, MA 02139-4307.}
\author[]{William P. Minicozzi II}%
\thanks{The  authors
were partially supported by NSF Grants DMS 1812142 and DMS 1707270.}
\email{colding@math.mit.edu  and minicozz@math.mit.edu}
\begin{document}

\begin{abstract}
We first bound the codimension of an ancient  mean curvature flow by the entropy.    As a consequence,  all blowups  lie  in a Euclidean subspace whose dimension is bounded by the entropy and dimension  of the evolving submanifolds.    This drastically reduces the complexity of the system.  Combined with \cite{CM12}, this gives the first general bounds on generic singularities of surfaces in arbitrary codimension.

We also show sharp bounds for codimension in arguably some of the most important situations of ancient flows. Namely, we prove that in any dimension and codimension any ancient flow that is  cylindrical at $-\infty$ must be a flow of hypersurfaces in a Euclidean subspace.  This extends well-known classification results to higher codimension.

The bound on the codimension in terms of the entropy is a special case of  sharp bounds  for spectral counting functions for shrinkers and, more generally, ancient flows.  Shrinkers are solutions that evolve by scaling and are the singularity models for the flow.     

Finally, we show rigidity of cylinders as shrinkers in all dimension and all codimension in a very strong sense: Any  shrinker, even in a large dimensional space, that is sufficiently close to a cylinder on a large enough, but compact, set is itself   a cylinder.       This is an important tool in the theory and is key for regularity; cf. \cite{CM8}.  
\end{abstract}

\maketitle

\section{Introduction}
We introduce a new circle of ideas that gives a new way of attacking mean curvature flow (MCF) in higher codimension.
      Higher codimension  MCF  is a   complicated nonlinear parabolic system where much less is known than for hypersurfaces.  The complexity of the system increases as the codimension increases.   
We  show that blowups of higher codimension MCF have much smaller codimension  than   the original flow.   In many important instances, we   show that blowups are evolving hypersurfaces in a  Euclidean subspace even when the original flow is far from being hypersurfaces.

One way of thinking about MCF is as a one-parameter family of submanifolds $M_t \subset \RR^N$ evolving so that the position vector $x\in M^n_t$ satisfies the nonlinear heat equation
\begin{align}
(\partial_t-\Delta_{M_t})\,x=0\, .
\end{align}
This equation is nonlinear since the Laplacian depends on the evolving submanifold $M_t$.   Many fundamental results and tools about elliptic PDEs have originated in the study of 
the minimal surface equation.  In much the same way,  MCF  is one of the most fundamental parabolic systems.  New results and tools are expected to apply to a variety of other   systems.  

There is a Lyapunov function for the flow that is particularly useful.  To define it recall that the Gaussian surface area $F$ of an $n$-dimensional submanifold
 $\Sigma^n \subset \RR^N$ is
\begin{align}
	F(\Sigma) = \left( 4\,\pi \right)^{ - \frac{n}{2}} \,  \int_{\Sigma} \e^{ - \frac{|x|^2}{4} } \, .
\end{align}
Following \cite{CM6}, the entropy $\lambda$ is the supremum of $F$ over all translations and dilations
\begin{align}
	\lambda (\Sigma) = \sup_{c,x_0} \, F (c\,\Sigma + x_0) \, .
\end{align}
 By Huisken's monotonicity, \cite{Hu}, it follows that $\lambda$ is monotone nonincreasing under the flow.  From this, and lower semi continuity of $\lambda$, 
we have that all blowups have entropy bounded by that of the initial submanifold in a MCF.   

\subsection{Liouville properties}       Let $M_t^n \subset \RR^N$ be an ancient MCF of $n$-dimensional submanifolds with entropies 
$\lambda (M_t)\leq \lambda_0<\infty$.  Ancient   flows are solutions that exist  for all negative times.   The  space  $\cP_d$ of polynomial growth caloric functions consists of   $u(x,t)$ on $\cup_t M_t\times\{t\}$ so that  $(\partial_t - \Delta_{M_t})\, u=0$ and there exists $C$ depending on $u$ with
  \begin{align}
  	|u(x,t)| \leq C\, (1 + |x|^d+|t|^{\frac{d}{2}}) {\text{ for all }} (x,t) \text{ with }x\in M_t , \, t < 0 \, .
  \end{align}
  Motivated by \cite{CM1}--\cite{CM5}, similar spaces were considered in Calle's thesis \cite{Ca1}, \cite{Ca2}.
  
  Our first theorem is a sharp   bound for a parabolic ``counting function'' on ancient MCF (in all of these results, the time slices $M_t$ are allowed to be non-compact):

  \begin{Thm}	\label{t:maria}
  There exists $C_n$  so that if $M_t^n \subset \RR^N$ is an ancient MCF with $\lambda (M_t) \leq \lambda_0$ and $d \geq 1$, then 
$ \dim \cP_d \leq C_n \, \lambda_0 \, d^{n}$.
  \end{Thm}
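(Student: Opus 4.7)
The plan is to combine Huisken's monotonicity with the ``mean value plus almost-orthogonality'' template used by \cite{CM1}--\cite{CM5} for polynomial growth harmonic functions, and adapted to a weighted parabolic setting in \cite{Ca1,Ca2}. Three ingredients drive the argument: a scale-parametrized family of Gaussian-weighted $L^2$ inner products on the time slices, a polynomial-in-$\tau$ growth bound in terms of $\lambda_0$, and a ``dropping'' lemma that converts polynomial growth into a dimension estimate.

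First, for $\tau>0$ and $u,v\in\cP_d$, I would define
\begin{align*}
\langle u,v\rangle_\tau \;=\; (4\pi\tau)^{-\frac{n}{2}}\int_{M_{-\tau}} u(\cdot,-\tau)\,v(\cdot,-\tau)\,\e^{-\frac{|x|^2}{4\tau}}\, .
\end{align*}
The polynomial growth hypothesis $|u|\le C(1+|x|^d+|t|^{d/2})$ together with $\lambda(M_{-\tau})\le\lambda_0$ and standard Gaussian moment estimates yields
\begin{align*}
\langle u,u\rangle_\tau \;\le\; C(u)\,\lambda_0\,(1+\tau^{d})\qquad\text{for all }\tau\ge 1\, .
\end{align*}
Applied to the sub-caloric function $u^2$, Huisken's monotonicity (more precisely, its weighted variant on the MCF, with $u$ caloric) gives a controlled differential inequality in $\tau$ for $\langle u,u\rangle_\tau$ whose principal term is a Gaussian-weighted Dirichlet energy of $u$ on $M_{-\tau}$. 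In particular $\langle\cdot,\cdot\rangle_\tau$ is non-degenerate on $\cP_d$ for every $\tau$, and $\cP_d$ is finite-dimensional.

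The heart of the argument is the dropping lemma. Let $V\subset\cP_d$ have dimension $K$, fix $A>1$ large, and set $\tau_j=A^j\tau_0$ for some $\tau_0\ge 1$. Given any $\langle\cdot,\cdot\rangle_{\tau_0}$-orthonormal basis $\{u_i\}_{i=1}^K$ of $V$, one shows
\begin{align*}
\sum_{i=1}^{K}\langle u_i,u_i\rangle_{\tau_{j+1}} \;\le\; C_n\,A^{2d}\,\sum_{i=1}^{K}\langle u_i,u_i\rangle_{\tau_{j}}\, .
\end{align*}
Combined with the absolute growth bound $\langle u_i,u_i\rangle_\tau \le C_n\lambda_0(1+\tau^d)$ on the left-hand side (after renormalizing the basis to be orthonormal at $\tau_j$), and iterating across $O(\log d)$ scales with a pigeon-hole step to extract the optimal exponent, this yields $K\le C_n\,\lambda_0\,d^n$. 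The factor $d^n$ (versus the $d^{n-1}$ of the elliptic harmonic case) reflects the extra time weight $|t|^{d/2}$ in the growth condition; $\lambda_0$ appears linearly because it enters only as a uniform upper bound on the Gaussian area of each slice $M_{-\tau}$.

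The main obstacle is the dropping lemma itself in this weighted, higher-codimension, parabolic context. In the elliptic case one integrates by parts with a cutoff and invokes a reverse Poincar\'e inequality. Here the corresponding $\partial_\tau$-computation on $\langle u_i,u_j\rangle_\tau$ produces cross terms coming from both the drift Laplacian $\Delta_{M_{-\tau}}-\tfrac{1}{2}\langle x,\nabla\cdot\rangle$ and from the motion of $M_t$ itself, and these must be shown to be absorbed by Huisken's monotonicity. A second subtlety, fundamental for the rest of the paper, is that all estimates must be independent of the ambient dimension $N$, so the codimension cannot enter at any step; this is consistent with the stated constant $C_n$ depending only on $n$.
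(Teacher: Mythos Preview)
Your general framework---Gaussian-weighted $L^2$ quantities $I_u(t)$, their monotonicity from Huisken, and the orthogonalization scheme of \cite{CM2}---matches the paper's setup. But the ``dropping lemma'' as you state it does not do any work: with $\tau_{j+1}>\tau_j$ and $I_u$ monotone in $|t|$, an inequality of the form $\sum_i\langle u_i,u_i\rangle_{\tau_{j+1}}\le C_nA^{2d}\sum_i\langle u_i,u_i\rangle_{\tau_j}$ follows from polynomial growth alone and yields no dimension bound however you iterate it. What the paper actually extracts by pigeon-hole (Lemma~\ref{l:goodm}, with $\Omega=1+3/d$) is a single scale at which a basis orthonormal at time $-\Omega^{m_q+1}$ satisfies the \emph{lower} bound $\sum_i I_{v_i}(-\Omega^{m_q})\ge \e^{-6}\ell$; integrating $I'_{v_i}$ over $[-(1+1/d)\Omega^{m_q},-\Omega^{m_q}]$ then produces a time $t_0$ at which the total Gaussian Dirichlet energy satisfies $-t_0\sum_i(-4\pi t_0)^{-n/2}\int_{M_{t_0}}|\nabla v_i|^2\,\e^{|x|^2/4t_0}\le C\,d\,\ell$.

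The step you are missing entirely is the localization inequality (Lemma~\ref{l:localizeMt}), which the paper itself flags as the new idea: it converts the energy bound at $t_0$ into $(-4\pi t_0)^{-n/2}\int_{M_{t_0}}\frac{|x|^2}{-t_0}K\,\e^{|x|^2/4t_0}\le C\,d\,\ell$ for the Bergman trace $K=\sum_i v_i^2$, so that $K$ concentrates in $B_R$ with $R\sim\sqrt{d\,|t_0|}$. Inside $B_R$, the parabolic mean value inequality (Lemma~\ref{l:mvi}) at radius $\rho=\sqrt{|t_0|/d}$---this is exactly where the doubling control on $I_u$ is spent---gives the pointwise bound $(-4\pi t_0)^{-n/2}\,\e^{|x_0|^2/4t_0}K(x_0,t_0)\le C_n(d/|t_0|)^{n/2}$, and integrating over $B_R\cap M_{t_0}$, whose volume is at most $C_n\lambda_0(d\,|t_0|)^{n/2}$ by the entropy bound, yields $\ell\le C_n\lambda_0\, d^n$. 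The sharp exponent thus comes from the balance $\rho^{-n}\cdot R^n=d^n$, not from the time weight in the growth condition as you suggest. Without the localization step there is no mechanism to restrict the integration to a ball of the correct radius, and your sketch does not close.
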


  The dependence on $d$ is sharp on Euclidean space, where $\cP_d (\RR^n)$ consists of the classical caloric polynomials.  
  For a fixed manifold with $\Ric \geq 0$ that is time-independent, the related bound $C\, d^{n+1}$ was proven 
    by  Lin and Zhang, \cite{LZ}, adapting the arguments of \cite{CM1}--\cite{CM5} for harmonic functions.
The  sharp bound $C \, d^n$ in that case was proven in 
  \cite{CM9}.  These  time-independent bounds use the commutativity of $\Delta$ and $\partial_t$ and do not apply here.   
    Instead a key here
   is a new localization inequality for the Gaussian $L^2$ norm. 
     This new approach allows us to obtain the optimal dependence; see \cite{CM10} for more.
  Similar localization ideas also play a role  later in this paper.

  Theorem \ref{t:maria} has a number of applications, including bounds for the associated heat kernel.  One remarkable consequence with $d=1$ is a bound for the codimension.  This is because the flow sits inside a linear subspace of dimension at most $\dim \, \cP_1$ 
  since a linear relation for coordinate functions specifies a hyperplane containing the flow.

      \begin{Cor}	\label{c:counting2}
  There exists $C_n$ so that if $M_t^n \subset \RR^N$ is an ancient MCF, then it is contained in a Euclidean subspace of dimension   $\leq C_n \, \sup_t\lambda (M_t)$.
  \end{Cor}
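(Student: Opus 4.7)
The plan is to apply Theorem \ref{t:maria} with $d = 1$ to $\lambda_0 = \sup_t \lambda(M_t)$, and then read off the codimension bound from the coordinate functions of the flow, exactly along the lines of the hint given in the excerpt. The MCF equation $(\partial_t - \Delta_{M_t})\, x = 0$ is, component by component, the statement that each Euclidean coordinate $x_i$, viewed as a function on $\bigcup_t M_t \times \{t\}$, is caloric along the flow. Together with the constant function $1$, these functions have linear growth, $|1| + |x_i(x,t)| \leq 1 + |x|$, so they all lie in $\cP_1$.

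First I would let $V \subseteq \cP_1$ be the linear subspace spanned by $1, x_1, \ldots, x_N$. By Theorem \ref{t:maria} with $d = 1$, $\dim V \leq \dim \cP_1 \leq C_n\, \lambda_0$. A linear relation $c_0 + \sum_i c_i\, x_i \equiv 0$ on $\bigcup_t M_t$ is, by definition, the statement that $\bigcup_t M_t$ lies in the affine hyperplane $\{y \in \RR^N : c_0 + \sum_i c_i\, y_i = 0\}$, and the intersection of all such hyperplanes is the affine hull $L$ of the flow. The restriction map identifies $V$ with the space of affine functions on $L$, which has dimension $\dim L + 1$. Hence
\begin{equation*}
\dim L \;=\; \dim V - 1 \;\leq\; C_n\, \lambda_0 \, ,
\end{equation*}
and the flow is contained in this affine subspace, which, up to a harmless translation that does not affect $\lambda$ or the MCF property, is a Euclidean subspace of the stated dimension.

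The content of the corollary is therefore entirely in Theorem \ref{t:maria}; once that sharp counting estimate is available, the passage from ``polynomial growth caloric functions'' to ``codimension'' is a short piece of linear-algebra bookkeeping, and no genuine obstacle remains at this stage. The only issue to keep track of is that the hypothesis $\lambda(M_t) \leq \lambda_0$ in Theorem \ref{t:maria} must hold uniformly in $t$, which is exactly why the corollary states the bound in terms of $\sup_t \lambda(M_t)$; if this supremum is infinite, the conclusion is vacuous and nothing needs to be proved.
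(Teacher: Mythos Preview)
Your proposal is correct and follows exactly the approach the paper indicates: apply Theorem~\ref{t:maria} with $d=1$, observe that the constant function and the coordinate functions $x_i$ lie in $\cP_1$ (since $(\partial_t-\Delta_{M_t})\,x=0$ is the MCF equation), and deduce the affine-hull bound from the resulting linear relations. The paper compresses all of this into a single sentence before the statement of the corollary, and you have spelled out the linear-algebra bookkeeping carefully and correctly.
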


    Singularities are modeled by shrinkers $\Sigma$ that evolve by scaling.       The most fundamental shrinkers are cylinders $\SS^k_{\sqrt{2k}} \times \RR^{n-k}$,
but there are many others including all $n$-dimensional minimal submanifolds of the sphere $\partial B_{\sqrt{2n}} \subset \RR^N$.   
    Let  $\Sigma^n \subset \RR^N$ be a shrinker with finite entropy $\lambda (\Sigma)$. 
As in \cite{CM6}, the drift Laplacian  (Ornstein-Uhlenbeck operator) $\cL = \Delta - \frac{1}{2} \nabla_{x^T}$ is self-adjoint with respect to the Gaussian inner product
$
	\int_{\Sigma} u\, v \, \e^{ -  \frac{|x|^2}{4} }  \, .
$
 Let  $\| u \|_{L^2}$  denote the Gaussian $L^2$ norm.
We will say that $u$ is a $\mu$-eigenfunction   if $\cL \, u = - \mu \, u$   and $0 < \| u \|_{L^2} < \infty$. 
    The {\it spectral counting function} $\cN (\mu)$ is the number of eigenvalues $\mu_i\leq \mu$ counted with multiplicity.
     The next result  bounds  $\cN$:
  
   \begin{Thm}	\label{t:countingN}
 There exists $C_n$ so that the counting function for $\cL$ on an $n$-dimensional shrinker $\Sigma^n \subset \RR^N$   satisfies $\cN (\mu) \leq C_n \, \lambda (\Sigma) \, \mu^{n}$ for $\mu \geq 1$.
  \end{Thm}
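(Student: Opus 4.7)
The plan is to reduce the spectral counting bound to the parabolic counting bound of Theorem~\ref{t:maria}, by realizing $\mu$-eigenfunctions of $\cL$ on $\Sigma$ as polynomial growth caloric functions on the associated self-similar MCF $M_t = \sqrt{-t}\,\Sigma$ for $t<0$. Since entropy is scale-invariant, $\lambda(M_t)=\lambda(\Sigma)$, so Theorem~\ref{t:maria} applies to this flow with $\lambda_0 = \lambda(\Sigma)$.

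Given a $\mu$-eigenfunction $u$ of $\cL$, I would set
\[
v(x,t) = (-t)^\mu \, u\!\left(x/\sqrt{-t}\right) \quad \text{for } x \in M_t,\ t<0.
\]
Using $\partial_t(x/\sqrt{-t}) = x/(2(-t)^{3/2})$ and the scaling relation $\Delta_{M_t} = (-t)^{-1}\Delta_{\Sigma}$ (valid because $M_t$ is $\Sigma$ dilated by $\sqrt{-t}$), a direct computation gives
\[
(\partial_t - \Delta_{M_t})\, v = -(-t)^{\mu-1}\bigl(\mu u + \cL u \bigr) = 0,
\]
so $v$ is caloric on the flow. The map $u \mapsto v$ is linear and injective, since $v(\cdot,-1) = u$. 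Given $N = \cN(\mu)$ linearly independent eigenfunctions $u_1,\dots,u_N$ with eigenvalues $\mu_1,\dots,\mu_N \leq \mu$, this produces $N$ linearly independent caloric functions $v_1,\dots,v_N$.

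To apply Theorem~\ref{t:maria} I must place each $v_i$ in $\cP_d$ for some $d$ linear in $\mu$. The key intermediate step is a pointwise polynomial growth bound for eigenfunctions: there exists $C = C(n)$ so that every $\mu$-eigenfunction $u$ with $\|u\|_{L^2} < \infty$ satisfies $|u(y)| \leq C\, \| u \|_{L^2}\, (1 + |y|^{2\mu})$, uniformly over shrinkers of bounded entropy. Granting this bound, one estimates $|v_i(x,t)| \leq C\bigl((-t)^{\mu_i} + |x|^{2\mu_i}\bigr) \leq C\bigl(1 + |x|^{2\mu} + |t|^{\mu}\bigr)$, so $v_i \in \cP_{2\mu}$, and then
\[
\cN(\mu) \leq \dim \cP_{2\mu} \leq C_n\, \lambda(\Sigma)\, (2\mu)^n \leq C'_n\, \lambda(\Sigma)\, \mu^n.
\]

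The hard part is the polynomial growth estimate for eigenfunctions. My approach would combine a mean value / reverse Poincar\'e inequality for solutions of $\cL u = -\mu u$ on balls of scale $\sim |y|^{-1}$ (at which the drift $\tfrac12 \nabla_{y^T}$ becomes a bounded perturbation of $\Delta$ after rescaling), with the polynomial volume growth of $\Sigma$ implied by finite entropy, then iterate the resulting $L^\infty$-from-$L^2$ bound along rays emanating from the origin, using the Gaussian $L^2$ hypothesis to control the contribution from infinity. I expect the paper's new localization inequality for the Gaussian $L^2$ norm to enter precisely here, since the iteration must be calibrated to yield a growth degree linear in $\mu$. Fortunately any polynomial degree $\leq C\mu$ suffices to recover the sharp power $\mu^n$ in the final estimate, so the constant in front of $\mu$ does not need to be sharp.
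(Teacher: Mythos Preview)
Your overall architecture matches the paper exactly: associate to $\Sigma$ the self-similar flow $M_t=\sqrt{-t}\,\Sigma$, turn each $\mu_i$-eigenfunction $u_i$ into the caloric function $v_i(y,t)=(-t)^{\mu_i}u_i(y/\sqrt{-t})$, verify $v_i\in\cP_{2\mu}$, and invoke Theorem~\ref{t:maria}. The paper records the separation of variables as Lemma~\ref{l:sepvar} and the passage $u_i\mapsto v_i\in\cP_{2\mu_i}$ as Corollary~\ref{c:polyGu}, so the short proof of Theorem~\ref{t:countingN} in the paper is literally the chain you wrote. One small caution: your heat-equation check treats $x$ as fixed when computing $\partial_t(x/\sqrt{-t})$; on a moving submanifold the time derivative must follow the normal motion $y_t=-\bH_{\Sigma_t}$, and only the tangential part $x^T$ survives in the final identity (see \eqr{e:ytderiv}). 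The conclusion is unchanged.

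Where you diverge is in the polynomial growth bound for eigenfunctions, which is indeed the crux. The paper does \emph{not} argue elliptically on $\Sigma$. Instead it reuses the caloric function $v$ itself: since $|v|$ is a heat subsolution (Lemma~\ref{l:heatconvex}), Huisken's weighted monotonicity \eqr{e:eckerH} gives a mean-value inequality of the form
\[
|u|(x_0)=|v(x_0,-1)|\le (-t)^{\mu}\int_{\Sigma}|u|\,\e^{-\frac{|x-x_0/\sqrt{-t}|^2}{4(1+t^{-1})}}\,,
\]
and choosing $t=-4-|x_0|^2$ after a Cauchy--Schwarz step yields $u^2(x_0)\le C_n\lambda(\Sigma)\|u\|_{L^2}^2(4+|x_0|^2)^{2\mu}$ with the sharp exponent $2\mu$ (Theorem~\ref{t:polyGu}). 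This parabolic route is short and avoids any iteration. Your proposed elliptic iteration at scale $|y|^{-1}$ is plausible and would suffice for the theorem since, as you note, any degree $O(\mu)$ works; but your expectation that the Gaussian localization inequality enters here is misplaced---that inequality (Lemma~\ref{l:lemmamu}) is used elsewhere in the paper (spectral concentration, Poincar\'e comparisons), not in the proof of polynomial growth.
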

    
       The dependence on $\mu$ is sharp even on Euclidean space.       A key component in the proof is a sharp polynomial growth bound for eigenfunctions of $\cL$ on any shrinker.  This result is of independent interest.  It too is sharp on $\RR^n$ and shows that any eigenfunction on any shrinker grows polynomially of degree at most twice the eigenvalue (see Theorem \ref{t:polyGu}).  
    	
	Specializing Theorem \ref{t:countingN} to $\mu = \frac{1}{2}$ gives:
	
  \begin{Cor}	\label{c:counting}
  There exists $C_n$ so that if $\Sigma^n \subset \RR^N$ is a shrinker, then it is contained in a Euclidean subspace of dimension   $\leq C_n \, \lambda(\Sigma)$.
  \end{Cor}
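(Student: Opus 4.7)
The plan is to exhibit the $N$ coordinate functions $x_1,\dots,x_N$, restricted to $\Sigma$, as $\frac{1}{2}$-eigenfunctions of $\cL$, and then apply Theorem \ref{t:countingN} to bound the dimension of the $\frac{1}{2}$-eigenspace.

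First, I would check by a short direct computation that on any shrinker one has $\cL\, x_i = -\frac{1}{2}\, x_i$ for every $i$. Since $\Delta_\Sigma x_i$ equals the $i$-th component of the mean curvature vector and $\nabla x_i = (e_i)^T$, the definition $\cL x_i = \Delta_\Sigma x_i - \frac{1}{2}\langle x, \nabla x_i\rangle$ becomes $H_i - \frac{1}{2}(x^T)_i$, which collapses to $-\frac{1}{2}(x^\perp)_i - \frac{1}{2}(x^T)_i = -\frac{1}{2} x_i$ once the shrinker equation $H = -\frac{1}{2} x^\perp$ is used. The Gaussian $L^2$-norm of $x_i$ on $\Sigma$ is finite because $\lambda(\Sigma)<\infty$ controls all polynomial moments of the Gaussian weight on a shrinker. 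Hence each $x_i|_\Sigma$ is a (possibly zero) $\frac{1}{2}$-eigenfunction of $\cL$.

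Next, let $V \subset \RR^N$ denote the smallest linear subspace containing $\Sigma$. The linear map $\RR^N \to C^\infty(\Sigma)$ sending $v \mapsto \langle v, x\rangle|_\Sigma$ has kernel exactly $V^\perp$ and image contained in the $\frac{1}{2}$-eigenspace, so its rank $\dim V$ is at most the dimension of that eigenspace, and hence at most $\cN(\frac{1}{2})$. By monotonicity of $\cN$ we have $\cN(\frac{1}{2}) \le \cN(1)$, and Theorem \ref{t:countingN} at $\mu = 1$ gives $\cN(1) \le C_n\, \lambda(\Sigma)$. Combining these with $\Sigma \subset V$ proves the corollary.

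The corollary is essentially a geometric reading of the spectral bound in Theorem \ref{t:countingN}; no step is genuinely hard. The only points that merit care are keeping sign conventions straight in the eigenvalue computation and verifying the weighted $L^2$-integrability of the coordinate functions, both standard consequences of the shrinker equation and the entropy bound.
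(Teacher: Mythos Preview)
Your proposal is correct and follows essentially the same approach as the paper, which simply says the corollary comes from specializing Theorem \ref{t:countingN}. Your extra care in applying the theorem at $\mu=1$ and then using $\cN(\tfrac{1}{2})\le\cN(1)$ is appropriate, since the theorem as stated assumes $\mu\ge 1$; the paper is slightly loose on this point, and your fix is the natural one.
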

  
  Combined with \cite{CM12}, Corollary \ref{c:counting} shows that all closed $2$-dimensional singularities for higher codimension mean curvature flow that cannot be perturbed away have uniform entropy bounds and lie in a linear subspace of small dimension.   This gives the first general bounds for generic singularities   in higher codimension.

Our estimates in Corollaries \ref{c:counting2} and \ref{c:counting} are linear in the entropy.  The corresponding linear estimate   for algebraic varieties in
 complex projective space follows from B\'ezout's theorem; see corollary $18.12$ in \cite{Ha}.    
 When $\Sigma \subset \partial B_{\sqrt{2\,n}}\subset \RR^N$ is a closed $n$-dimensional minimal submanifold of the sphere and the entropy reduces to the volume, this estimate follows   Cheng-Li-Yau, \cite{CgLYa}.

   \subsection{Sharp bound for codimension}
   The next result gives sharp bounds for codimension in arguably some of the most important situations for ancient flows.   The bounds in the previous subsection were sharp in the 
   exponent of $d$ and, thus, asymptotically sharp as $d\to \infty$.  The next result is more delicate and obtains sharp constants for  $d$ fixed.
   
Suppose that $M_t^n\subset \RR^N$ is an ancient MCF with $\sup_t\lambda (M_t)<\infty$.  For each constant $c>0$ define the flow $M_{c,t}$ by
$
M_{c,t}=\frac{1}{c}\,M_{c^{2}\,t}
$.
It follows that $M_{c,t}$ is an ancient MCF as well.  Since  $\sup_t\lambda(M_t)<\infty$, it follows from Huisken's monotonicity, \cite{Hu}, and work of Ilmanen, \cite{I}, White, \cite{W3}, that every sequence $c_i\to \infty$ has a subsequence (also denoted by $c_i$) so that $M_{c_i,t}$ converges to a shrinker $M_{\infty,t}$ (so $M_{\infty,t}=\sqrt{-t}\,M_{\infty,-1}$) with $\sup_t\lambda (M_{\infty,t})\leq \sup_t\lambda (M_t)$.  We will say that such a $M_{\infty,t}$ is a tangent flow at $-\infty$ of the original flow.  
We next give a sharp bound for the codimension:

\begin{Thm} \label{t:ancientcylinder}
  If $M^n_t\subset\RR^N$ is an ancient MCF and one tangent flow at $-\infty$ is a cylinder $\SS^k_{\sqrt{2\,k}}\times \RR^{n-k}$, then $M_t$ is  a flow of hypersurfaces in a Euclidean subspace.
  \end{Thm}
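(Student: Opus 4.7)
The strategy is to sharpen the general bound $\dim \cP_1 \leq C_n \, \lambda_0$ of Theorem \ref{t:maria} to the optimal $\dim \cP_1(M_t) \leq n+2$ under the cylindrical-tangent hypothesis. Since the $N$ ambient coordinate functions $x_a|_{M_t}$ are caloric and lie in $\cP_1$, together with the constant $1$ they span a subspace of $\cP_1$; the bound then forces at least $N-n-1$ independent linear relations among $\{1, x_1, \dots, x_N\}$, confining every time slice $M_t$ to an $(n+1)$-dimensional affine subspace of $\RR^N$ and exhibiting $M_t$ as a flow of hypersurfaces in a Euclidean subspace.

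To obtain the sharp bound I first analyze the shrinker model. On the cylinder $\Sigma = \SS^k_{\sqrt{2k}} \times \RR^{n-k}$, the tangential position vector is supported purely on the Euclidean factor, so $\cL$ splits as $\cL = \Delta_{\SS^k_{\sqrt{2k}}} + \cL_{\RR^{n-k}}$. The eigenvalue equation $\frac{j(j+k-1)}{2k} + \frac{\ell}{2} = \frac{1}{2}$ has only the solutions $(j,\ell) = (1,0)$ (the $k+1$ degree-one spherical harmonics) and $(0,1)$ (the $n-k$ linear Hermite functions), giving $n+1$ linearly independent $\frac{1}{2}$-eigenfunctions, namely the $n+1$ coordinate functions of the ambient $\RR^{n+1}$. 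Combining this with the polynomial growth bound for $\cL$-eigenfunctions and separation of variables in $(x,t)$, one obtains the decomposition $\cP_1 = \cP_0 \oplus \{\sqrt{-t}\,\phi(x/\sqrt{-t}) : \phi \in \Ker(\cL + \tfrac{1}{2})\}$ on the shrinking cylinder flow, so $\dim \cP_1 = n+2$ there.

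For any $u \in \cP_1(M_t)$, define the parabolic rescaling $u_c(x,t) = c^{-1} u(cx, c^2 t)$ on $M_{c,t}$. A direct check shows $u_c$ is caloric on $M_{c,t}$ and satisfies $|u_c(x,t)| \leq C c^{-1} + C(|x| + |t|^{1/2})$, so the family is uniformly in $\cP_1$ and locally uniformly bounded. Along a subsequence $c_i \to \infty$ realizing the hypothesized cylindrical tangent flow $M_{\infty,t}$, parabolic regularity extracts a smooth subsequential limit $u_\infty$, caloric on $M_{\infty,t}$ with $|u_\infty(x,t)| \leq C(|x| + |t|^{1/2})$; hence $u_\infty \in \cP_1(M_{\infty,t})$, an $(n+2)$-dimensional space.

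The main obstacle is non-degeneracy: if $u \in \cP_1 \setminus \cP_0$, one must show $u_\infty \notin \cP_0$, so that the rescaling map descends to an injection $\cP_1(M_t)/\cP_0 \hookrightarrow \cP_1(M_{\infty,t})/\cP_0$. The natural tool is a parabolic frequency monotonicity for caloric functions on ancient MCF --- a parabolic analog of the elliptic frequency of \cite{CM1}--\cite{CM5}, built using the Gaussian-weighted localization developed for Theorem \ref{t:maria} and in \cite{CM10}. Concretely, one defines a Gaussian-weighted frequency $U(u,\tau)$ at parabolic scale $\tau$, proves it is monotone in $\tau$, and shows that for $u \in \cP_1 \setminus \cP_0$ the frequency stays bounded below by a positive constant; this prevents the rescaled Gaussian $L^2$ norm of $u_c$ from collapsing after normalization, forcing $u_\infty$ to be non-constant. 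Granting this, $\dim \cP_1(M_t) \leq n+2$, and the dimension-counting argument of the first paragraph finishes the proof.
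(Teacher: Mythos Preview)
Your reduction to $\dim \cP_1(M_t) \leq n+2$ and the model computation on the cylinder are correct, and the paper makes exactly this reduction (Theorem \ref{t:ancientP1}). The gap is precisely where you write ``Granting this'': the non-degeneracy of the rescaling map is the entire content of the theorem, and you have not supplied it. A parabolic frequency for caloric functions on a \emph{time-varying} MCF is not defined anywhere in the paper or in the references you invoke, its monotonicity is not established, and even if it were monotone it is not clear why $u \in \cP_1 \setminus \cP_0$ forces the limiting frequency to be bounded away from zero rather than landing at some value in $(0,\tfrac{1}{2})$. Ruling that out requires a spectral-gap argument on the cylinder, and that is exactly the missing work. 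There is also a subtlety you pass over: the hypothesis is that \emph{one} tangent flow at $-\infty$ is cylindrical, so to run any scale-by-scale argument you first need Lemma \ref{p:ancientcyl} (which rests on the rigidity Theorem \ref{t:rigidity}) to know that \emph{all} sufficiently negative rescalings are close to some rotated cylinder.

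The paper's argument is structurally different and avoids compactness limits altogether. Assuming $u_0=1,u_1,\dots,u_{n+2}\in\cP_1$ are independent, Lemma \ref{l:goodm} produces, at times $-\Omega^{m_q+1}\to -\infty$, $J$-orthonormal caloric functions $v_1,\dots,v_{n+2}$ with the \emph{lower} bound $\sum_i I_{v_i}(-\Omega^{m_q})\geq (n+2)\,\Omega^{-1-\mu}$. After an orthogonal change of basis, $v_{n+2}$ is additionally $J$-orthogonal to the $n+1$ coordinate functions of the nearby cylinder. Two quantitative differential inequalities then give \emph{upper} bounds: Lemma \ref{l:atleastlinear} uses the first Poincar\'e inequality on the cylinder (gap to eigenvalue $\tfrac{1}{2}$) to show that each $v_i$ orthogonal to constants satisfies $I_{v_i}(-\Omega^{m_q})\lesssim \Omega^{\mu-1}$, while Lemma \ref{l:atleastquad} uses the second Poincar\'e inequality (gap to eigenvalue $1$) to show that $v_{n+2}$, being also orthogonal to the linear functions, satisfies the stronger $I_{v_{n+2}}(-\Omega^{m_q})\lesssim \Omega^{2\mu-2}$. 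Summing, $(n+1)\Omega^{\mu-1}+\Omega^{2\mu-2}$ is strictly less than $(n+2)\Omega^{-1-\mu}$ for $\mu$ small, and the error terms coming from the flow not being exactly self-similar are controlled by $|\kappa|\to 0$ via Lemmas \ref{l:diffcont}, \ref{l:vV1}, \ref{l:vV2}. This is a direct, finite-scale spectral-gap contradiction; no frequency function and no passage to a subsequential limit is used.
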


 \vskip1mm
  We believe that Theorem \ref{t:ancientcylinder}  will have wide ranging consequences for MCF in higher codimension.   We will try here to briefly explain some of these  (more discussion is in the subsection ``Further applications'' at the end of the introduction).  
  
  Using   Angenent-Daskalopoulos-Sesum, \cite{ADS}, Brendle-Choi, \cite{BCh}, and Choi-Haslhofer-Hershkovits, \cite{ChHH},  we get uniqueness for ancient flows of surfaces in 
  higher codimension:
  
  \begin{Cor}
  If $M^2_t\subset\RR^N$ is an ancient MCF of surfaces and one tangent flow at $-\infty$ is a cylinder $\SS^1_{\sqrt{2}}\times \RR$, then $M_t\subset \RR^3\subset \RR^N$ for some $3$-plane $\RR^3$.  Therefore, by \cite{ChHH} $M_t$ is either shrinking round cylinders, or the ancient ovals, or the bowl soliton.
 \end{Cor}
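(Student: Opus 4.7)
The plan is to combine Theorem~\ref{t:ancientcylinder} with the hypersurface classification of Choi--Haslhofer--Hershkovits~\cite{ChHH}, so the corollary should follow in two short steps.

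First I would apply Theorem~\ref{t:ancientcylinder} directly with $n=2$ and $k=1$: since one tangent flow at $-\infty$ is $\SS^1_{\sqrt{2}}\times \RR$, the theorem yields a Euclidean subspace $V\subset\RR^N$ such that $M_t$ is a flow of hypersurfaces in $V$. Because $M_t$ is $2$-dimensional and each time slice is a hypersurface of $V$, we get $\dim V = 3$, so $V$ is the required $3$-plane. (If the Euclidean subspace provided by the theorem is affine rather than linear, one may translate; the cylindrical tangent flow at $-\infty$ passes through the origin and pins down the subspace up to a parallel translation that does not affect the MCF structure.)

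Second, once $M_t\subset \RR^3$, it becomes an ancient MCF of surfaces in $\RR^3$ whose tangent flow at $-\infty$ is the round shrinking cylinder $\SS^1_{\sqrt{2}}\times \RR$. At this point I would simply invoke the classification of ancient MCFs in $\RR^3$ asymptotic to a round cylinder, which by Choi--Haslhofer--Hershkovits~\cite{ChHH} (building on Angenent--Daskalopoulos--Sesum~\cite{ADS} and Brendle--Choi~\cite{BCh}) forces $M_t$ to be one of the round shrinking cylinders, an ancient oval, or the bowl soliton.

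Neither step involves a serious obstacle: the only point requiring a moment's care is the verification that the subspace produced by Theorem~\ref{t:ancientcylinder} is genuinely three-dimensional (using that $M_t^2$ is a hypersurface of it), and the confirmation that the hypotheses of the $\RR^3$ classification~\cite{ChHH} (in particular, the existence of a cylindrical blowdown at $-\infty$) are inherited from our assumption. Thus the proof is essentially a one-line reduction followed by a citation.
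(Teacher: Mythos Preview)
Your proposal is correct and matches the paper's approach exactly: the paper presents this corollary as an immediate consequence of Theorem~\ref{t:ancientcylinder} together with the hypersurface classifications \cite{ADS}, \cite{BCh}, \cite{ChHH}, without giving any further argument. Your two-step reduction (apply Theorem~\ref{t:ancientcylinder} to land in $\RR^3$, then cite \cite{ChHH}) is precisely what the paper intends.
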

  
  White, \cite{W2}, and Haslhofer-Hershkovits, \cite{HH}, constructed ancient MCF of closed hypersurfaces that for time zero disappear in a round point and at time $-\infty$ are shrinking cylinders.  These are  the ancient ovals.    Hershkovits, \cite{H}, showed (see also Haslhofer, \cite{Has}) that the bowl soliton in $\RR^3$ is the unique translating solution  of MCF which has the family of shrinking cylinders as an asymptotic shrinker at $-\infty$.

  \subsection{Rigidity of cylinders}

   Our next result   plays a key role in the proof of Theorem \ref{t:ancientcylinder}
   and in 
  the regularity of MCF in higher codimension, cf. \cite{CM8}.  This result shows that cylinders are rigid in a very strong sense: Any  shrinker, even in a large dimensional space, that is sufficiently close to a cylinder on a large enough, but compact, set is itself   a cylinder.  
  To state the theorem, 
let $\cC_{n,N}$ be the collection of all $\RR^N$ rotations of  $\SS^k_{\sqrt{2k}} \times \RR^{n-k}$ for $k=1, \dots , n$.

  \begin{Thm}	\label{t:rigidity}
  There exists $R_N$ so that if $\Sigma^n \subset \RR^N$ is a complete shrinker with finite entropy and there exists $\cC \in \cC_{n,N}$ so that
  $B_{R_N} \cap \Sigma$ is a graph over $\cC$ of a normal vector field $V$ with $\| V \|_{C^{2,\alpha}} \leq R_N^{-1}$, then $\Sigma \in \cC_{n,N}$.
  \end{Thm}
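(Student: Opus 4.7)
I would argue by contradiction and compactness. If the theorem fails there exist radii $R_i \to \infty$, complete shrinkers $\Sigma_i^n \subset \RR^N$ of finite entropy, and cylinders $\cC_i \in \cC_{n,N}$ with $B_{R_i}\cap \Sigma_i$ a normal graph over $\cC_i$ of some $V_i$ satisfying $\|V_i\|_{C^{2,\alpha}}\le R_i^{-1}$, yet $\Sigma_i\notin \cC_{n,N}$. Since $\cC_{n,N}$ is a union of finitely many $O(N)$-orbits, after rotating in $\RR^N$ and passing to a subsequence I may assume all $\cC_i$ equal a fixed $\cC=\SS^k_{\sqrt{2k}}\times \RR^{n-k}$. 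Because shrinkers have entropy equal to $F$ at the origin and $F(\cC)<\infty$, the $C^{2,\alpha}$-smallness on $B_{R_i}$ together with the polynomial volume growth of complete shrinkers of finite entropy forces a uniform bound $\lambda(\Sigma_i)\le \lambda_0$. Ilmanen--White Brakke compactness then gives $\Sigma_i \to \cC$ smoothly on compact sets with multiplicity one.

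Next I would linearize the shrinker equation $H+\tfrac{1}{2}x^\perp=0$ at $\cC$: for a normal graph $V$ over $\cC$ it becomes $LV = Q(V)$, where $L$ is the Jacobi (second variation) operator of the Gaussian area at $\cC$ acting on sections of the normal bundle, and $Q$ is nonlinear of quadratic order in $(V,\nabla V,\nabla^2 V)$. On $\cC$ the operator $L$ is an Ornstein--Uhlenbeck-type drift Laplacian plus bounded zeroth order curvature terms, and by separating the sphere factor from the Euclidean one it has purely discrete spectrum in Gaussian $L^2$. Its non-positive eigenfunctions span a finite dimensional space $\cK$ generated exactly by the infinitesimal symmetries of $\cC_{n,N}$: rotations of $\RR^N$ taking $\cC$ to nearby cylinders in $\cC_{n,N}$, translations along the $\RR^{n-k}$-axis, and dilation. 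A positive spectral gap separates $\cK$ from the rest of the spectrum of $L$.

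I would then rescale. Set $\alpha_i = \|V_i\|_{L^2_\rho(\cC)}$ and $\tilde V_i = V_i/\alpha_i$. Using the $O(N)$, translation, and scaling freedom in the choice of $\cC_i \in \cC_{n,N}$ I may assume $\tilde V_i$ is $L^2_\rho$-orthogonal to $\cK$. The rescaled equation $L\tilde V_i = \alpha_i^{-1}Q(V_i) = O(\alpha_i) \to 0$ together with interior Schauder estimates and Arzela--Ascoli yields a subsequential smooth limit $\tilde V_\infty$ on all of $\cC$ with $L\tilde V_\infty=0$ and $\tilde V_\infty\perp \cK$. The spectral gap forces $\tilde V_\infty\equiv 0$, contradicting $\|\tilde V_\infty\|_{L^2_\rho}=1$, provided no mass has been lost in the limit.

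The hard part is exactly the provision at the end: showing that no definite fraction of the Gaussian $L^2$-mass of $\tilde V_i$ leaks to infinity in the passage to the limit. Only $C^{2,\alpha}$-smallness of $V_i$ on $B_{R_i}$ is given, and outside $B_{R_i}$ the shrinker $\Sigma_i$ need not even be graphical over $\cC$. To close this gap I would exploit that $\Sigma_i$ is a complete shrinker of uniformly bounded entropy, hence has polynomial volume growth, together with a Gaussian localization inequality in the spirit of the one introduced earlier in the paper to prove Theorem~\ref{t:maria}; these tools effectively confine the $L^2_\rho$-mass of $V_i$ to a ball of size comparable to $\sqrt{n}$, well inside $B_{R_i}$, and allow the normalization and the spectral contradiction to survive rescaling. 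Verifying this confinement quantitatively on each $\Sigma_i$, and not merely on $\cC$, is in my view the technical core of the result.
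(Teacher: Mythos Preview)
Your approach is plausible but genuinely different from the paper's, and the paper's route sidesteps precisely the difficulty you flag as the technical core.

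The paper does not linearize the shrinker equation or rescale the normal field $V$. Instead it proves spectral stability of $\cL$ under $C^1$-closeness on a large ball (Proposition~\ref{p:stable} and Corollary~\ref{p:codim2}), using that eigenfunctions on \emph{any} shrinker concentrate in a ball of controlled radius (Corollary~\ref{c:lemmamu}), so that Dirichlet eigenvalues on $B_r\cap\Sigma$ approximate the global ones uniformly. Since the coordinate functions $x_i$ are always $\tfrac{1}{2}$-eigenfunctions of $\cL$ on $\Sigma$, and the cylinder has $d_{\cC}(\tfrac{1}{2})=n+1$, lower semicontinuity of multiplicity forces $d_{\Sigma}(\tfrac{1}{2})\le n+1$; hence the $N$ coordinate functions satisfy $N-(n+1)$ linear relations and $\Sigma$ lies in an $(n+1)$-dimensional affine subspace (Corollary~\ref{l:spechyp2}). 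The codimension-one rigidity is then outsourced to \cite{CIM}. For $k=n$ the paper even bypasses \cite{CIM}: since $1$ is not in the spectrum of $\SS^n_{\sqrt{2n}}$, spectral stability forces the $1$-eigenfunction $|x|^2-2n$ to vanish identically.

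The advantage of the paper's argument is that the localization inequality is applied to \emph{intrinsic} eigenfunctions on $\Sigma$, which are globally defined and whose concentration follows purely from the shrinker equation and finite entropy, with no reference to $\cC$ or graphicality. Your approach works with the \emph{extrinsic} field $V_i$, which is only defined on $B_{R_i}$ and for which concentration of $L^2_\rho$-mass is not a priori available; the confinement step you identify would essentially require reproving the content of \cite{CIM} in higher codimension. The paper's spectral reduction cleanly decouples the ``drop codimension'' step from the ``rigidity in a hyperplane'' step.
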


  The rigidity of cylinders in codimension one was proven in \cite{CIM}.  To prove Theorem \ref{t:rigidity},
   we show that a shrinker, even in high codimension, that is close to a cylinder on a  large bounded set must  be a hypersurface in some Euclidean subspace.   
   
    One of several reasons that cylinders are significant is that they are the most prevalent singularities.  By uniqueness of solutions to ODEs,   any   shrinking curve in $\RR^N$ is planar.   From this and dimension reduction, it is expected that for MCF in all codimension the most prevalent singularities are $\gamma\times \RR^{n-1}$.  Here $\gamma$ is a closed planar curve (Abresch-Langer) that is a round circle if embedded or stable, \cite{CM6}, or with $\lambda (\gamma) < 2$.

 \subsection{Further applications}  Even for hypersurfaces, singularities of MCF are too numerous to classify.     The hope is that the generic ones that cannot be perturbed away are much simpler.   
 Combined, this paper and
 \cite{CM12} give the first general bounds on generic singularities of surfaces in arbitrary codimension.  
    
 \subsubsection{Conjectures}

 Using \cite{CIMW} and Brendle, \cite{B},  Bernstein-Wang, \cite{BW3}, showed that any shrinker in $\RR^3$ with entropy $\leq \lambda (\SS^1)+\epsilon$, is a flat plane, round sphere, or round cylinder.   We believe that there should be a similar classification in low dimension and any codimension of low entropy shrinkers (cf. conjecture $0.10$ in \cite{CIMW}):   
 
 \begin{Con}  \label{c:bigC1}
  There exists $\epsilon>0$ so that for $n\leq 4$ and any codimension, the only shrinkers
  with entropy $< \lambda (\SS^1)+\epsilon$
are  round generalized cylinders, $\SS^{k}_{\sqrt{2\,k}}\times \RR^{n-k}$. 
  \end{Con}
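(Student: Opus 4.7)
The plan is to combine the codimension reduction of Corollary \ref{c:counting}, a shrinker compactness argument, and the cylindrical rigidity in Theorem \ref{t:rigidity}. The first step fixes the ambient dimension: by Corollary \ref{c:counting}, any shrinker $\Sigma^n \subset \RR^N$ with $\lambda(\Sigma) < \lambda(\SS^1)+1$ already sits inside a Euclidean subspace of dimension at most $N_0 := C_n\,(\lambda(\SS^1)+1)$. Thus, the entire problem reduces to a classification of $n$-dimensional shrinkers ($n\le 4$) inside the fixed $\RR^{N_0}$ with entropy below $\lambda(\SS^1)+\epsilon$.

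I would then argue by contradiction: if no such $\epsilon$ works, there exist shrinkers $\Sigma_i^n \subset \RR^{N_0}$ that are \emph{not} in $\cC_{n,N_0}$ and satisfy $\lambda(\Sigma_i) < \lambda(\SS^1) + \frac{1}{i}$. Uniform entropy bounds, Huisken's monotonicity, and the Ilmanen-White compactness theory for shrinkers yield, after passing to a subsequence, a limit shrinker $\Sigma_\infty \subset \RR^{N_0}$ with $\lambda(\Sigma_\infty)\leq \lambda(\SS^1)$. In dimension $n \leq 4$ the limit is smooth: any putative singularity produces an iterated tangent flow that is itself a shrinker of strictly lower dimension with entropy at most $\lambda(\SS^1)$, and one eventually lands on a shrinking curve, which by uniqueness of planar ODE solutions is either a circle, a line, or an Abresch-Langer curve of entropy strictly greater than $\lambda(\SS^1)$; only the first two possibilities are compatible with the entropy bound.

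The crux, and the main obstacle of the approach, is the classification of $\Sigma_\infty$: one must show that every complete smooth shrinker $\Sigma^n \subset \RR^{N_0}$ with $n \leq 4$ and $\lambda(\Sigma)\leq \lambda(\SS^1)$ is a generalized cylinder $\SS^k_{\sqrt{2k}}\times\RR^{n-k}$. In codimension one this is the Bernstein-Wang theorem for $n=2$ and its higher-dimensional analogues; the genuinely new difficulty is to rule out higher-codimension competitors. The strategy I would pursue is to combine the sharp eigenvalue bound of Theorem \ref{t:countingN}, which controls the number of low-lying eigenfunctions of $\cL$ and hence the dimension of the $F$-unstable directions, with the $F$-stability characterization of generalized cylinders from \cite{CM6}, and, when $\Sigma_\infty$ sits inside a round sphere, the Cheng-Li-Yau bound mentioned after Corollary \ref{c:counting}. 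The heuristic is that the extra ambient directions "cost" eigenvalues of $\cL$ below $\tfrac{1}{2}$, and the entropy hypothesis leaves no room for them.

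Once $\Sigma_\infty \in \cC_{n,N_0}$, smooth convergence on compact subsets implies that for every fixed $R$ and all sufficiently large $i$, $B_R \cap \Sigma_i$ is a normal graph over a rotation of $\Sigma_\infty$ of arbitrarily small $C^{2,\alpha}$-norm. Taking $R = R_{N_0}$ from Theorem \ref{t:rigidity} then forces $\Sigma_i \in \cC_{n,N_0}$ for large $i$, contradicting the choice of $\Sigma_i$ and completing the argument; the only remaining ingredient needed is the low-entropy classification of $\Sigma_\infty$ described above.
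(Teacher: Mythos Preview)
The statement you are attempting to prove is labeled \texttt{Con} in the paper --- it is Conjecture~\ref{c:bigC1}, stated in the introduction as an open problem, and the paper offers no proof of it. There is therefore nothing to compare your proposal against; the paper's ``proof'' does not exist.

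Your outline is a reasonable strategy for attacking the conjecture, but as written it does not close the gap, and you essentially say so yourself. The compactness-and-rigidity framework reduces the $\epsilon$-statement to the borderline statement: every smooth $n$-dimensional shrinker in $\RR^{N_0}$ with $n\le 4$ and $\lambda(\Sigma)\le\lambda(\SS^1)$ is a generalized cylinder. But that borderline statement is, up to the harmless passage from strict to non-strict inequality, the conjecture itself. The heuristic you offer --- that extra ambient directions should ``cost'' low eigenvalues of $\cL$ and that the entropy bound leaves no room for them --- is not a proof: Theorem~\ref{t:countingN} bounds $\cN(\mu)$ from above by $C_n\,\lambda(\Sigma)\,\mu^n$, which goes in the wrong direction for forcing $\Sigma$ to be a cylinder, and the $F$-stability characterization of cylinders in \cite{CM6} is a codimension-one result. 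Even in codimension one the borderline classification for $n=3,4$ is not established in the references you cite; Bernstein--Wang \cite{BW3} covers $n=2$.

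There are also softer issues in the compactness step (smoothness of the limit in higher codimension is not automatic from entropy bounds alone, and your dimension-reduction sketch for regularity would need Allard-type input to conclude), but these are secondary: the decisive missing ingredient is exactly the low-entropy classification of the limit, and without it the argument is circular.
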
 
  
  We conjecture that for any $n$ the round $\SS^n$ has the least entropy of any closed shrinker{\footnote{ In \cite{CIMW}, it was conjectured that the round $\SS^n$ minimizes entropy 
 among closed hypersurfaces  for $n \leq 6$.  This   was proven by 
   Bernstein-Wang, \cite{BW1}.   Zhu later proved this for all $n$ in \cite{Z}; cf. \cite{BW2},  \cite{KZ}.  We conjecture that this holds in all codimension.}}
 $\Sigma^n \subset \RR^N$.  
This was proven for hypersurfaces in \cite{CIMW}; see also \cite{HW}.     The ``Simons cone'' over $\SS^2\times \SS^2$ has entropy $<\lambda (\SS^1)$, see \cite{CIMW}.  So already for $n=5$,  round cylinders do give not a complete list of the lowest entropy shrinkers.
Conjecture \ref{c:bigC1} is known for $n=1$ since shrinking curves  are  planar and have entropy $\geq \lambda (\SS^1)$.

 Conjecture \ref{c:bigC1}  combined with Theorem \ref{t:ancientcylinder} would imply that any ancient solution $M_t^n \subset \RR^N$ with entropy at most $\lambda (\SS^1) + \epsilon$  is   a hypersurface in a Euclidean subspace provided $n\leq 4$.  This would give that all blowups near any cylindrical singularity for $n\leq 4$ are ancient flows of hypersurfaces.    Thus, reducing the system to a single equation.

 Finally, we conjecture:
  
  \begin{Con}	\label{c:bigC}
The optimal constant $C_n$ in Corollary \ref{c:counting2} satisfies
  \begin{align}
  	C_n \, \lambda ( \SS^1) < n+2 \, .
  \end{align}
  \end{Con}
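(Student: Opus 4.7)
The plan is to combine the spectral counting function of Theorem \ref{t:countingN} with the cylinder rigidity of Theorem \ref{t:rigidity} and the tangent-flow reduction behind Theorem \ref{t:ancientcylinder}.

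First I would reduce to shrinkers. Given an ancient MCF $M_t^n \subset \RR^N$ with $\sup_t \lambda(M_t) = \lambda_0$ that spans the ambient $\RR^N$, the rescalings $M_{c,t} = c^{-1} M_{c^2 t}$ subconverge to a tangent flow at $-\infty$, which is an $n$-dimensional shrinker $\Sigma$ with $\lambda(\Sigma) \leq \lambda_0$. The coordinate functions $x_1, \ldots, x_N$ restrict to linearly independent elements of $\cP_1(M_t)$ and are preserved up to normalization under the rescalings; combined with the $L^2$ convergence from Huisken's monotonicity this shows that $\Sigma$ spans an $N$-dimensional Euclidean subspace. Thus it suffices to prove: any shrinker $\Sigma^n \subset \RR^N$ that spans $\RR^N$ satisfies $N < (n+2)\,\lambda(\Sigma)/\lambda(\SS^1)$.

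Since each $x_i|_\Sigma$ is a $\mu=\tfrac12$ eigenfunction of $\cL$, $N$ is bounded above by the dimension of the $\mu=\tfrac12$ eigenspace. On a cylinder $\SS^k_{\sqrt{2k}} \times \RR^{n-k}$ this dimension is exactly $n+1$ and the entropy is $\lambda(\SS^k)$, so the conjecture reduces on cylinders to the arithmetic inequality $(n+1)\lambda(\SS^1) < (n+2)\lambda(\SS^k)$, which one verifies directly in the regime where the conjecture is stated. To extend to arbitrary shrinkers I would use a dichotomy based on the threshold from Theorem \ref{t:rigidity}: either $B_{R_N} \cap \Sigma$ is an $R_N^{-1}$-small $C^{2,\alpha}$ graph over some $\cC \in \cC_{n,N}$, in which case Theorem \ref{t:rigidity} gives $\Sigma \in \cC_{n,N}$ and the cylinder computation applies, or else $\Sigma$ is quantitatively far from every cylinder, in which case the low-entropy classification (\cite{BW3} for hypersurfaces, conditional on Conjecture \ref{c:bigC1} in higher codimension) gives a gap $\lambda(\Sigma) \geq \lambda(\SS^1) + \delta_0$ with $\delta_0 = \delta_0(n,N) > 0$ that, combined with Corollary \ref{c:counting}, closes the argument.

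The main obstacle is obtaining a sharp constant in Theorem \ref{t:countingN} at $\mu=\tfrac12$: the localization inequality powering Theorem \ref{t:maria} yields the sharp exponent in $d$ but not sharp constants, so a genuinely new analytic ingredient, likely a sharp Gaussian log-Sobolev or eigenvalue rearrangement inequality saturated precisely by the cylinders, is needed. A secondary obstacle is Conjecture \ref{c:bigC1} in higher codimension, on which the quantitative entropy gap in the non-cylindrical branch of the dichotomy presently depends.
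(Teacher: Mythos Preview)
The statement you are addressing is Conjecture~\ref{c:bigC}: the paper presents it as an open problem and gives no proof. There is therefore nothing in the paper to compare your proposal against.

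Your proposal is, as you acknowledge in its final paragraph, not a proof but a conditional plan. Beyond the obstacles you name, two of its steps do not work as written. First, the reduction to shrinkers is unjustified: the claim that an ancient flow spanning $\RR^N$ has a tangent flow at $-\infty$ that also spans $\RR^N$ is not established by the convergence you invoke, since the restrictions $x_i|_{M_t}$ can become linearly dependent in the limit (Huisken monotonicity controls Gaussian integrals, not preservation of linear independence). The conjecture concerns the optimal constant for \emph{all} ancient flows, and Theorem~\ref{t:ancientcylinder} already illustrates that passing from a shrinker bound to an ancient-flow bound requires substantial additional work. Second, the non-cylindrical branch of your dichotomy does not close: even granting Conjecture~\ref{c:bigC1} (which is only formulated for $n\leq 4$), the entropy gap $\lambda(\Sigma)\geq\lambda(\SS^1)+\delta_0$ feeds back into Corollary~\ref{c:counting} to give $N\leq C_n\lambda(\Sigma)$ with the \emph{same} non-sharp constant $C_n$ whose size you are trying to pin down, so the argument is circular.
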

  
  If this conjecture holds, then any ancient solution $M_t^n \subset \RR^N$ with entropy $<\lambda (\SS^1) + \epsilon$, would be a flow of hypersurfaces in a Euclidean subspace.  This would give that all blowups near any cylindrical singularity are ancient flows of hypersurfaces.    

  \section{The operator $\cL$  on the Gaussian space on  shrinkers}
  
   The shrinker equation is $\bH = \frac{ x^{\perp}}{2}$, 
where $\bH = -\Tr \, A$ is the mean curvature vector, $A$ the second fundamental form, and $x^{\perp}$ is the perpendicular part of $x$.{\footnote{See \cite{AHW}, \cite{AS}, \cite{LL} and \cite{Wa} for results on higher codimension MCF.}} 
 Set  $f =  \frac{|x|^2}{4}  $, so the Gaussian weight is $\e^{-f}$.
As in lemma $3.20$ in \cite{CM6}, the coordinate functions $x_i$ are $\frac{1}{2}$-eigenfunctions and   $|x|^2 - 2n$ is a $1$-eigenfunction for $\cL$ on any shrinker with finite entropy.   
We will need some standard facts about $L^2$ eigenfunctions (cf. section $3$ in \cite{CM7}):

\begin{Lem}	\label{l:std}
If $\Sigma^n \subset \RR^N$ is a shrinker   and $u$ is a $L^2$ $\mu$-eigenfunction, then $u \in W^{1,2}$ and $\int |\nabla u|^2 \, \e^{-f} =\mu \, \int u^2 \, \e^{-f}$.
If $v$ is a $\nu$-eigenfunction with $\nu \ne \mu$, then 
\begin{align}	\label{e:lastclaims}
	0 = \int u\,v \, \e^{-f} = \int \langle \nabla u , \nabla v \rangle \, \e^{-f} \, .
\end{align}
\end{Lem}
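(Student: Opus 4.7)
The plan is to establish both conclusions of Lemma \ref{l:std} via standard cutoff integration by parts, exploiting that $\cL = \e^{f}\,\dv(\e^{-f}\,\nabla \cdot\,)$ is symmetric with respect to the Gaussian inner product $\int \cdot \, \e^{-f}$. Because $\Sigma$ is only assumed complete, not compact, boundary contributions at infinity must be ruled out; the decisive ingredient is the Caccioppoli (absorbing) structure of the eigenvalue equation, which lets us bootstrap from $L^2(\e^{-f})$ control on $u$ to $W^{1,2}(\e^{-f})$ control. This bootstrap is the only genuine obstacle; every subsequent claim reduces to passing a cutoff to infinity against integrable data and applying dominated convergence.

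For the first assertion, fix a smooth cutoff $\phi_R$ equal to $1$ on $B_R \cap \Sigma$, supported in $B_{2R}\cap \Sigma$, with $|\nabla \phi_R| \leq C/R$. Testing $\cL u = -\mu u$ against $\phi_R^2 u$ and integrating by parts (legal because $\phi_R$ has compact support) yields
\begin{equation*}
\int \phi_R^2 \,|\nabla u|^2 \,\e^{-f} + 2\int \phi_R\, u\, \langle \nabla \phi_R, \nabla u\rangle \,\e^{-f} = \mu \int \phi_R^2 \,u^2 \,\e^{-f}.
\end{equation*}
Applying $2|\phi_R u \langle \nabla \phi_R,\nabla u\rangle| \leq \tfrac{1}{2}\phi_R^2 |\nabla u|^2 + 2u^2 |\nabla \phi_R|^2$ produces the Caccioppoli bound
\begin{equation*}
\tfrac{1}{2} \int \phi_R^2 \,|\nabla u|^2 \,\e^{-f} \leq \mu \int \phi_R^2 \,u^2 \,\e^{-f} + 2\int u^2 |\nabla \phi_R|^2 \,\e^{-f}.
\end{equation*}
Since $u \in L^2(\e^{-f})$ and $|\nabla \phi_R|^2 \leq C/R^2$, monotone convergence in $R$ gives $u \in W^{1,2}(\e^{-f})$ together with $\int |\nabla u|^2 \e^{-f} \leq 2\mu \int u^2 \e^{-f}$. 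With this finite-energy bound in hand, the cross term in the original (non-absorbed) identity is controlled by $(C/R)\,\|u\|_{L^2}\,\|\nabla u\|_{L^2} \to 0$, so dominated convergence yields the sharp equality $\int |\nabla u|^2 \e^{-f} = \mu \int u^2 \e^{-f}$.

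For the orthogonality relations, once $u$ and $v$ both lie in $W^{1,2}(\e^{-f})$ I would rerun the cutoff argument on the mixed pairing. Subtracting the two integration-by-parts identities for $\int \phi_R^2 \,u\, \cL v \,\e^{-f}$ and $\int \phi_R^2 \,v\, \cL u \,\e^{-f}$ gives
\begin{equation*}
(\mu - \nu)\int \phi_R^2 \,u v \,\e^{-f} = 2\int \phi_R \,\langle \nabla \phi_R,\, u\,\nabla v - v\,\nabla u\rangle \,\e^{-f},
\end{equation*}
and the right-hand side is $O(1/R)$ by Cauchy--Schwarz against the finite $L^2$ and $W^{1,2}$ norms of $u,v$. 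Letting $R \to \infty$ and using $\mu \neq \nu$ yields $\int uv\,\e^{-f} = 0$. Finally, integrating $\int \phi_R^2 \langle \nabla u, \nabla v\rangle \,\e^{-f}$ by parts using $\cL u = -\mu u$ and sending $R \to \infty$ produces $\int \langle \nabla u, \nabla v\rangle \e^{-f} = \mu \int uv\,\e^{-f} = 0$, completing the lemma.
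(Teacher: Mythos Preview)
Your proof is correct and follows essentially the same approach as the paper: a compactly supported cutoff, the identity from integrating $\dv(\eta^2 u\,\nabla u\,\e^{-f})$, an absorbing (Caccioppoli) inequality to get $\|\nabla u\|_{L^2}<\infty$, and then passing the cutoff to one to obtain the exact equality. The only cosmetic differences are that the paper uses a generic $\eta$ with $|\nabla\eta|\leq 1$ (relying on integrability of $|u|\,|\nabla u|$ for the limit) rather than your scaled $\phi_R$ with $|\nabla\phi_R|\leq C/R$, and the paper dispatches \eqr{e:lastclaims} in one line by ``symmetry of $\cL$'' whereas you write out the cutoff computation explicitly.
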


\begin{proof}
Let $\eta$ be a compactly supported function with $\eta^2 \leq 1$ and $|\nabla \eta | \leq 1$.  Taking the divergence of $u\, \nabla u \, \eta^2 \, \e^{-f}$ and applying Stokes'  theorem gives that
\begin{align}	\label{e:basicU}
	\int |\nabla u|^2 \, \eta^2 \, \e^{-f} - \mu \int u^2\, \eta^2 \, \e^{-f} =- 2 \int u\, \eta\, \langle \nabla u , \nabla \eta \rangle \, \e^{-f} \, .
\end{align}
Applying the absorbing inequality $2\,a\,b \leq \frac{a^2}{2} + 2\,b^2$ and then using $\eta^2 \leq 1$ and $|\nabla \eta| \leq 1$ gives
\begin{align}	 
	\int |\nabla u|^2 \, \eta^2 \, \e^{-f} \leq  \left(  \mu   + 2  \right)  \,  \int u^2\, \e^{-f}  + \frac{1}{2}  \int |\nabla u|^2 \, \eta^2 \, \e^{-f} \, .
\end{align}
Absorbing the last term on the right and   taking   $\eta$'s converging to one everywhere, we conclude that $\int |\nabla u|^2 \, \e^{-f} < \infty$.  Once we have this, 
$|u| \, |\nabla u|$ is also integrable, so the right-hand side of \eqr{e:basicU} goes to zero as   $\eta \to 1$.  We conclude  that 
$\int |\nabla u|^2 \, \e^{-f} = \mu \, \int u^2 \, \e^{-f}$.  Finally, \eqr{e:lastclaims} follows from the symmetry of $\cL$.
\end{proof}

We show next that the $W^{1,2}$ norm   of an eigenfunction  concentrates in a bounded set.  The next lemma and corollary apply  to $W^{1,2}$ functions
that are either entire or defined  on a compact subdomain and vanish on the boundary.

  \begin{Lem}	\label{l:lemmamu}
If $u$ is a $W^{1,2}$ function on a shrinker $\Sigma^n \subset \RR^N$, then 
\begin{align}
	  \int |x|^2 \, u^2 \, \e^{-f}  \leq 4\,n \, \int u^2 \, \e^{-f} + 16 \, \int |\nabla u|^2 \, \e^{-f} \, .
\end{align}
Moreover, for any $r>2$, we have
\begin{align}
	\int_{\Sigma \setminus B_{r}} |\nabla u|^2 \, \e^{-f}  \leq  \int_{\Sigma \setminus B_{r-1}}  (5\, u^2 + (\cL \, u)^2 ) \, \e^{-f}    \, . 
\end{align}
\end{Lem}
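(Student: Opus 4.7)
Both parts reduce to integration by parts in the Gaussian space, combined with an absorbing inequality. In both cases I would first carry out the computation assuming $u$ is smooth with compact support (or, when $u$ is defined on a compact subdomain, smooth and vanishing on the boundary), and then extend to $W^{1,2}$ by approximation with functions of the form $u\,\eta_k$ where $\eta_k$ is a cutoff with $|\nabla \eta_k|\to 0$, mollifying if necessary.

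\textbf{Part 1.}  The central identity is $\cL\,|x|^2 = 2n - |x|^2$, which is immediate from the fact (recalled in the text, cf.\ \cite{CM6}) that $|x|^2 - 2n$ is a $1$-eigenfunction of $\cL$. Multiplying by $u^2\,\e^{-f}$ and using the Gaussian self-adjointness of $\cL$ (equivalently: $\dv(\nabla|x|^2 \,\e^{-f}) = \cL(|x|^2)\,\e^{-f}$ and Stokes, noting $\nabla|x|^2 = 2x^T$), I would derive
\begin{align}
	\int |x|^2\, u^2\, \e^{-f} = 2n \int u^2 \, \e^{-f} + 4 \int u \,\langle \nabla u, x^T \rangle \, \e^{-f} \, .
\end{align}
The cross term is then handled by the weighted AM--GM inequality $4|u||x||\nabla u| \leq \tfrac{1}{2}\,u^2|x|^2 + 8\,|\nabla u|^2$, and absorbing $\tfrac{1}{2}\int u^2|x|^2\,\e^{-f}$ into the left-hand side gives exactly the claimed $4n$ and $16$.

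\textbf{Part 2.}  I would pick a Lipschitz cutoff $\phi$ on $\Sigma$ with $\phi \equiv 0$ on $B_{r-1}$, $\phi \equiv 1$ on $\Sigma \setminus B_r$, and $|\nabla \phi|\leq 1$ (supported in $B_r\setminus B_{r-1}$, whose existence requires $r>2$ so that $r-1>1$). Using $\int \langle \nabla(\phi^2 u), \nabla u\rangle\,\e^{-f} = -\int \phi^2\, u\,\cL u\,\e^{-f}$, one obtains
\begin{align}
	\int \phi^2 \,|\nabla u|^2\,\e^{-f} = -\int \phi^2\, u\, \cL u\,\e^{-f} - 2\int \phi\, u\,\langle \nabla \phi, \nabla u\rangle\,\e^{-f} \, .
\end{align}
I would then absorb the two right-hand terms by $|\phi^2 u\,\cL u| \leq \tfrac12\phi^2 u^2 + \tfrac12 \phi^2 (\cL u)^2$ and $2\phi |u||\nabla\phi||\nabla u| \leq \tfrac{1}{2}\phi^2|\nabla u|^2 + 2\,u^2|\nabla \phi|^2$. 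Absorbing $\tfrac12 \int \phi^2 |\nabla u|^2 \e^{-f}$ on the left yields
\begin{align}
	\int \phi^2 |\nabla u|^2 \e^{-f} \leq \int \phi^2 u^2 \e^{-f} + \int \phi^2 (\cL u)^2 \e^{-f} + 4 \int u^2 |\nabla \phi|^2 \e^{-f} \, .
\end{align}
Since $\phi \geq \chi_{\Sigma \setminus B_r}$ on the left and $\phi^2, |\nabla \phi|^2 \leq \chi_{\Sigma \setminus B_{r-1}}$ on the right, the three terms collapse into $\int_{\Sigma \setminus B_{r-1}}(5u^2 + (\cL u)^2)\,\e^{-f}$, which is the asserted inequality with sharp constant $5 = 1 + 4$.

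\textbf{Main obstacle.}  Both computations are formal one-liners once the integration by parts is justified. The only real care required is the density step: verifying that the inequalities, first proved for compactly supported smooth $u$ (where no boundary terms arise and every integral is finite a priori), pass to the $W^{1,2}$ limit. For part 1 this uses Fatou's lemma on the $\int|x|^2 u^2\,\e^{-f}$ side together with $W^{1,2}$ convergence of $u_k = u\,\eta_k$ on the right; for part 2, since $\phi$ is already compactly supported away from $\partial B_{r-1}$, only a standard mollification inside the annulus is needed.
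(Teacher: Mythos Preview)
Your proposal is correct and follows essentially the same route as the paper: the same divergence identities (for $u^2\,x^T\,\e^{-f}$ in Part~1 and $\psi^2\,u\,\nabla u\,\e^{-f}$ in Part~2), the same absorbing inequalities, and the same linear cutoff. The only minor difference is that you spell out the approximation step more carefully than the paper does; your parenthetical justification for the constraint $r>2$ via the cutoff is not quite the point, but this does not affect the argument.
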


\begin{proof}
Since $\cL \,  |x|^2 = 2\,n - |x|^2$, taking the divergence of $ u^2 \, x^T \, \e^{-f}  $ and applying Stokes' theorem and then the absorbing inequality $2\,a\,b \leq \frac{a^2}{4} + 4\,b^2$ gives
\begin{align}
	\frac{1}{2} \, \int |x|^2 \, u^2 \, \e^{-f} &= n \, \int u^2 \, \e^{-f} + 2 \int u \, \langle \nabla u , x^T \rangle \, \e^{-f} \notag \\
	&\leq n \, \int u^2 \, \e^{-f} + \frac{1}{4}  \int u^2 \, |x|^2 \, \e^{-f} + 4\, \int |\nabla u|^2 \, \e^{-f} \, .
\end{align}
The first claim follows.  For the second claim, let $\psi$ be a function that is identically zero on $B_{r-1}$ and identically one outside of $B_r$.  Taking the divergence of 
$ \psi^2 \, u \, \nabla u \, \e^{-f}  $ and 
applying Stokes' theorem and then the absorbing inequalities $ a\,b \leq \frac{a^2}{2} + \frac{b^2}{2}$  and $2\,a\,b \leq \frac{a^2}{2} + 2\,b^2$   gives
\begin{align}
	\int \psi^2 \, |\nabla u|^2 \, \e^{-f} &= -  \int \psi^2\, u\,\cL \, u \, \e^{-f} - 2\, \int   \psi\, \, u\, \langle\, \nabla \psi , \nabla u \rangle \, \e^{-f} \notag \\
		&\leq \frac{1}{2} \, \int \psi^2\, (u^2 + (\cL \, u)^2 ) \, \e^{-f} + \frac{1}{2} \, \int  \psi^2 \, |\nabla u|^2 \, \e^{-f} + 2 \int u^2\, |\nabla \psi |^2 \, \e^{-f} \, . 
\end{align}
Simplifying this and taking $\psi$ to cut off linearly gives the second claim.
\end{proof}

One immediate consequence of Lemma \ref{l:lemmamu} (with $u\equiv 1$)
 is that if $\Sigma^n \subset \RR^N$ is a shrinker with  entropy $\lambda < \infty$, then $\lambda$ is bounded in terms of the volume
 of $B_r \cap \Sigma$ for $r > \sqrt{4n}$.

 \begin{Cor}	\label{c:lemmamu}
If $\cL \, u = - \mu \, u$ on a shrinker $\Sigma^n \subset \RR^N$ and $\|u \|_{L^2}= 1$, then for any $r> 2$  
\begin{align}
	  \int_{\Sigma \setminus B_r} \left\{ u^2 + |\nabla u|^2 \right\} \, \e^{-f} \leq  (6+ \mu^2) \frac{4\,(n+4\,\mu)}{(r-1)^2} \, . 
\end{align}
\end{Cor}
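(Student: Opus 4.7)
The plan is to combine the two inequalities of Lemma \ref{l:lemmamu} with the eigenfunction identity from Lemma \ref{l:std} and a Chebyshev-type argument.

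First, I would exploit the normalization. Since $\cL u = -\mu u$ and $\|u\|_{L^2} = 1$, Lemma \ref{l:std} gives $\int |\nabla u|^2 \, e^{-f} = \mu$. Substituting this into the first inequality of Lemma \ref{l:lemmamu} yields
\begin{equation*}
\int |x|^2 \, u^2 \, e^{-f} \leq 4n + 16\mu = 4(n + 4\mu).
\end{equation*}
This is the key global moment bound on $u$.

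Next, on the complement of a ball we convert this moment bound into an $L^2$ tail bound. For any $\rho > 0$, since $|x|^2 \geq \rho^2$ on $\Sigma \setminus B_\rho$, Chebyshev gives
\begin{equation*}
\int_{\Sigma \setminus B_\rho} u^2 \, e^{-f} \leq \frac{1}{\rho^2} \int |x|^2 \, u^2 \, e^{-f} \leq \frac{4(n + 4\mu)}{\rho^2}.
\end{equation*}
Applied at $\rho = r-1 > 1$, this controls the $L^2$ tail outside $B_{r-1}$, and \emph{a fortiori} outside $B_r$.

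Finally I would feed this into the second inequality of Lemma \ref{l:lemmamu}. Since $(\cL u)^2 = \mu^2 u^2$, that inequality specializes to
\begin{equation*}
\int_{\Sigma \setminus B_r} |\nabla u|^2 \, e^{-f} \leq (5 + \mu^2) \int_{\Sigma \setminus B_{r-1}} u^2 \, e^{-f} \leq (5 + \mu^2)\,\frac{4(n+4\mu)}{(r-1)^2}.
\end{equation*}
Adding the $u^2$ tail bound (also estimated on $\Sigma \setminus B_{r-1}$, contributing the ``$1$'' in ``$6 = 1 + 5$'') to the gradient bound gives the claimed inequality with prefactor $6 + \mu^2$. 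There is no real obstacle here: the argument is essentially a bookkeeping exercise, and the only mild subtlety is using the larger domain $\Sigma \setminus B_{r-1}$ to absorb the Chebyshev estimate into the gradient bound provided by Lemma \ref{l:lemmamu}, which is why the denominator becomes $(r-1)^2$ rather than $r^2$.
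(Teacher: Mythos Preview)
Your proof is correct and follows essentially the same route as the paper: use Lemma~\ref{l:std} to get $\|\nabla u\|_{L^2}^2=\mu$, feed this into the first inequality of Lemma~\ref{l:lemmamu}, apply Chebyshev at radius $r-1$, and combine with the second inequality of Lemma~\ref{l:lemmamu} specialized via $(\cL u)^2=\mu^2 u^2$. The paper's proof is just a more compressed version of exactly these steps.
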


\begin{proof}
Lemma \ref{l:std} gives that $\| \nabla u \|_{L^2}^2 = \mu$. Thus, 
 Lemma \ref{l:lemmamu}
gives that
\begin{align}
	  (r-1)^2 \, \int_{\Sigma \setminus B_{r-1} } u^2 \, \e^{-f} &\leq \int |x|^2 \, u^2 \, \e^{-f}   \leq 4\,n   + 16 \, \mu  \,, \\
	 	\int_{\Sigma \setminus B_{r}} |\nabla u|^2 \, \e^{-f}  &\leq  \int_{\Sigma \setminus B_{r-1}}  (5 + \mu^2) \, u^2   \, \e^{-f}    \, . 
\end{align}
Combining these gives
the corollary.
\end{proof}

    \section{Sharp polynomial growth of eigenfunctions}
    
    On $\RR^n$, the $L^2$ space is spanned by eigenfunctions for $\cL$ and these   are polynomials of degree   twice the eigenvalue.  Moreover, on any shrinker, 
     the coordinate functions are eigenfunctions   with eigenvalue  $\frac{1}{2}$ and $|x|^2-2\,n$ is an eigenfunction with eigenvalue $1$.  In both cases, the degree is twice the eigenvalue.
     The next theorem shows that  $L^2$ eigenfunctions on a shrinker always  grow at most polynomially with degree twice the eigenvalue.

 \begin{Thm}	\label{t:polyGu}
 If $\cL\, u = - \mu \, u$ on a shrinker $\Sigma^n \subset \RR^N$ and $\| u \|_{L^2} < \infty$, then  
 \begin{align}
 	u^2(x) \leq C_n \, \lambda (\Sigma) \,  \| u \|^2_{L^2(\Sigma)} \, (4 + |x|^2)^{2\,\mu} \, .
 \end{align}
 \end{Thm}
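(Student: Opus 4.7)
The plan is to combine a local Moser-type pointwise estimate for the eigenvalue equation with higher-order weighted moment bounds on $u$, then balance the two against the Gaussian weight.

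The first ingredient is to iterate Corollary \ref{c:lemmamu}. Substituting $v = |x|^k u$ into Lemma \ref{l:lemmamu} and applying weighted integration by parts against the eigenvalue equation $\cL u = -\mu u$, together with the identity $\cL(|x|^2) = 2n - |x|^2$, yields a coupled recursion for the moment integrals
\begin{align*}
I_k := \int_\Sigma |x|^{2k}\, u^2\, e^{-f}, \qquad J_k := \int_\Sigma |x|^{2k}\, |\nabla u|^2\, e^{-f}.
\end{align*}
Starting from $I_0 = \|u\|_{L^2}^2$ and $J_0 = \mu\,\|u\|_{L^2}^2$ (Lemma \ref{l:std}), this produces
\begin{align*}
\int_\Sigma |x|^{2k}\, u^2\, e^{-f} \leq C(n,\mu,k)\,\|u\|_{L^2}^2
\end{align*}
for every integer $k \geq 0$. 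As a consequence, the Gaussian tail decays at any polynomial rate:
$\int_{\Sigma \setminus B_r} u^2\, e^{-f} \leq C(n,\mu,k)\,r^{-2k}\,\|u\|_{L^2}^2$.

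The second ingredient is a pointwise Moser-type bound. Fix $x_0 \in \Sigma$ with $R := |x_0| \geq 1$. On the Euclidean ball $B_{1/R}(x_0)$ the drift $\tfrac{1}{2}\langle x^T, \nabla\cdot\rangle$ appearing in $\cL$ has size $O(R)\cdot (1/R) = O(1)$ after rescaling by $R$. Combining standard Moser iteration for the subsolution inequality $\cL(u^2) \geq -2\mu u^2$ with an entropy-controlled local volume bound for $B_{1/R}(x_0) \cap \Sigma$, and converting to the weighted integral via $e^{-f} \geq e^{-(R+R^{-1})^2/4}$ on this ball, gives
\begin{align*}
u^2(x_0) \leq C_n\,\lambda(\Sigma)\,R^n\,e^{(R+R^{-1})^2/4} \int_{B_{1/R}(x_0)\cap \Sigma} u^2\, e^{-f}.
\end{align*}
Since $B_{1/R}(x_0) \subset \Sigma \setminus B_{R-1}$, the tail decay from Step~1 bounds the right-hand side by $C(n,\mu,k)\,R^{-2k}\,\|u\|_{L^2}^2$ for arbitrary $k$. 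Optimizing $k$ as a function of $R$ should absorb the factor $e^{R^2/4}$ and leave the target polynomial growth $(4+|x_0|^2)^{2\mu}$; the bound for small $|x_0|$ follows from the standard elliptic $L^2$-to-$L^\infty$ estimate.

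The main obstacle is the delicate dependence of $C(n,\mu,k)$ on $k$. A naive iteration of the recursion (using $|x^T|\leq|x|$ and Cauchy--Schwarz) produces constants of size $(Ck)^{2k}$, which, at the optimal choice $k \sim R^2$, is only just short of fully absorbing the Gaussian factor $e^{R^2/4}$. To recover the sharp exponent $4\mu$ one must exploit more than just the subsolution property $\cL(u^2)\geq -2\mu u^2$: the proof should use the \emph{exact} identity $\|\nabla u\|_{L^2}^2 = \mu\,\|u\|_{L^2}^2$ from Lemma \ref{l:std}, together with careful cancellations in the cross-terms produced by iterating $|x|^2 = 2n - \cL(|x|^2)$ against the eigenvalue equation, in order to upgrade the moment bound to constants growing essentially like $C^k$ in the relevant weighted sense.
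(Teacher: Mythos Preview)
Your approach has a genuine gap, and you identify it yourself: the moment recursion for $I_k = \int |x|^{2k} u^2 \e^{-f}$ produces constants $C(n,\mu,k)$ that grow at least like $k!$ in $k$, and your ``optimizing $k$ as a function of $R$'' step is never carried out. The vague appeal to ``careful cancellations in the cross-terms'' is not a proof. Even if one could push the recursion to $C_k \leq C^k k!$, the optimization $\min_k \, \e^{R^2/4} C_k R^{-2k}$ is extremely sensitive to the value of $C$; and even if you succeed in killing the exponential, there is no mechanism in your outline that singles out the \emph{sharp} exponent $2\mu$ rather than some larger polynomial degree depending on $n$. The constant $C_n$ in the theorem is required to be independent of $\mu$, which your scheme does not address.

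The paper avoids this entire fight with the Gaussian weight by a parabolic trick you have not used. The eigenfunction $u$ on $\Sigma$ gives, via separation of variables (Lemma~\ref{l:sepvar}), a caloric function $v(y,t) = (-t)^{\mu} u(y/\sqrt{-t})$ on the self-shrinking flow $\Sigma_t = \sqrt{-t}\,\Sigma$. Since $|v|$ is a heat subsolution (Lemma~\ref{l:heatconvex}), Huisken's weighted monotonicity applies directly and bounds $|u(x_0)| = |v(x_0,-1)|$ by a Gaussian integral of $|v|$ over $\Sigma_t$ for any $t < -1$. After rescaling back to $\Sigma$ and Cauchy--Schwarz, the free parameter $t$ is chosen as $t = -4 - |x_0|^2$: the factor $(-t)^{\mu}$ then produces exactly $(4+|x_0|^2)^{\mu}$, the entropy bounds the remaining integral, and the Gaussian factors cancel rather than having to be absorbed. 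The sharp exponent is thus built into the separation-of-variables ansatz, not extracted from a delicate constant-tracking in an iteration.
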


 The key to Theorem \ref{t:polyGu} will be to use parabolic estimates on   an associated solution of the heat equation on the self-shrinking MCF.

  \subsection{Separation of variables solutions}
  
  If $u$ is an   eigenfunction on $\RR^n$ with    $\cL\, u = - \mu\, u$, then we get a separation of variables solution $v(x,t)$ of the heat equation
  \begin{align}
  	v(x,t) = (-t)^{\mu} \, u \left( \frac{x}{\sqrt{-t}}\right) \, .
  \end{align}
On $\RR$, 
   $\cL \, x = - \frac{1}{2}\, x$  gives $v(x,t)=x$, while
 $\cL \, ( x^2-2) = - (x^2 -2)$   gives $v(x,t) = x^2 + 2\,t$.
       
    Suppose that $\Sigma^n \subset \RR^N$ is a shrinker and define a MCF of sets  $\Sigma_t = \sqrt{-t} \, \Sigma$.      Let $M_t$ be a MCF associated to $\Sigma_t$.  $M_t$ comes with a parametrization that may not come from scaling.  As sets $M_t$ and $\Sigma_t$ are the same.  
    
    \begin{Lem}	\label{l:sepvar}
    If $u$ is a function on $\Sigma$ with $\cL \, u = - \mu \, u$, then 
  $v$ given on  the   $\Sigma_t$'s by
\begin{align}
	v(y,t) = (-t)^{\mu} \, u \left(\frac{y}{\sqrt{-t}} \right)  
\end{align}
satisfies $(\partial_t - \Delta_{M_t})\, v = 0$ on the MCF $M_t$.
    \end{Lem}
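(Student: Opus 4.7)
The plan is to pull back to the fixed shrinker $\Sigma$ using the scaling map $\phi_t(x) = \sqrt{-t}\,x$ for $x \in \Sigma$ and $t<0$, under which the metric on $M_t$ pulls back to the constant rescaling $(-t)\, g_\Sigma$. The ambient velocity of $\phi_t$ is $\partial_t \phi_t = -x/(2\sqrt{-t})$, and the shrinker equation $\bH_\Sigma = x^\perp/2$ combined with the scaling law $\bH_{M_t}\circ \phi_t = \bH_\Sigma/\sqrt{-t}$ gives $(\partial_t \phi_t)^\perp = -\bH_{M_t}$. Thus $\phi_t$ realizes the MCF up to the tangential slip $(\partial_t \phi_t)^T = -x^T/(2\sqrt{-t})$, and for any function $w$ on spacetime the intrinsic heat operator at $\phi_t(x)$ equals
\[
(\partial_t - \Delta_{M_t})\, w \;=\; \frac{d}{dt}\bigl[w(\phi_t(x),t)\bigr] \;-\; \langle \nabla^{M_t} w, (\partial_t \phi_t)^T\rangle \;-\; \Delta_{M_t} w.
\]

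Next I compute each piece for $v(y,t) = (-t)^\mu u(y/\sqrt{-t})$. Direct differentiation along the scaling trajectory gives $\frac{d}{dt}\bigl[(-t)^\mu u(x)\bigr] = -\mu(-t)^{\mu-1}u(x)$. The fact that $\phi_t$ is just a constant conformal rescaling of the metric yields
\[
\Delta_{M_t} v \circ \phi_t \;=\; (-t)^{\mu-1}\,\Delta_\Sigma u, \qquad \nabla^{M_t} v \circ \phi_t \;=\; (-t)^{\mu-1/2}\,\nabla^\Sigma u,
\]
where the gradient is understood as a vector in $\RR^N$. Consequently the tangential-slip correction evaluates to $-\langle \nabla^{M_t}v, (\partial_t \phi_t)^T\rangle = \tfrac{1}{2}(-t)^{\mu-1}\langle \nabla u, x^T\rangle$.

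Assembling and recognizing the drift Laplacian $\cL u = \Delta u - \tfrac{1}{2}\langle \nabla u, x^T\rangle$ produces
\[
(\partial_t - \Delta_{M_t})\,v \;=\; -(-t)^{\mu-1}\bigl(\cL u + \mu u\bigr) \;=\; 0
\]
by the eigenfunction hypothesis. The only delicate point is the tangential slip: the scaling parametrization is natural because $v$ separates variables under it, but it is not purely normal, so one has to correct for the tangential drift to convert the scaling time derivative into the intrinsic MCF time derivative. The rest is routine conformal scaling of $\Delta$ and $\nabla$ by the constant factor $(-t)$.
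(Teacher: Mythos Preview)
Your proof is correct and is essentially the same chain-rule computation as the paper's, viewed from the dual perspective: the paper follows a point $y(t)$ moving by the normal MCF velocity and computes how the pulled-back point $y(t)/\sqrt{-t}$ drifts on $\Sigma$ (producing the $\tfrac12\langle\nabla u,x^T\rangle$ term in \eqr{e:ytderiv}), whereas you fix the $\Sigma$-point via the scaling map $\phi_t(x)=\sqrt{-t}\,x$ and account for the same term as the tangential slip $(\partial_t\phi_t)^T$. Both routes reduce $(\partial_t-\Delta_{M_t})v$ to $-(-t)^{\mu-1}(\cL u+\mu u)$.
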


    \begin{proof}
    Given $t< 0$ and a point $y \in \Sigma_t$, we get that
    \begin{align}		\label{e:Hsigt}
    	\bH_{\Sigma_t} (y) = \frac{1}{\sqrt{-t}} \, \bH_{\Sigma}\left( \frac{y}{\sqrt{-t}}\right) = \frac{1}{ \sqrt{-t} } \, \frac{\left[ \frac{y}{ \sqrt{-t} } \right]^{\perp}   }{2} = \frac{y^{\perp}}{-2t} \, .
    \end{align}
    Here, we have freely used that the normal projection $(\cdot)^{\perp}$ operator is invariant under dilation and, thus,  is the same at corresponding points in $\Sigma$ and $\Sigma_t$.
Since $\cL \, u = - \mu \, u$ on $\Sigma$, we have 
\begin{align}	\label{e:DeltaSigmau}
	\Delta_{\Sigma} u (x) = \frac{1}{2} \, \langle \nabla_{\Sigma} u (x) , x^T \rangle - \mu \, u (x) \, .
\end{align}
At $y \in \Sigma_t$, we use the chain rule and \eqr{e:DeltaSigmau} to compute the $\Sigma_t$ Laplacian of $v$  
\begin{align}	\label{e:deltasigt}
	\Delta_{\Sigma_t} v (y,t) &= (-t)^{\mu} \, \Delta_{\Sigma_t} \left[ u \left(\frac{y}{ \sqrt{-t} } \right) \right] 
	=  (-t)^{\mu-1} \, \left[ \Delta_{\Sigma} u \right]   \left(\frac{y}{ \sqrt{-t} } \right)  \notag \\
	&=  (-t)^{\mu-1} \, \left[  \frac{1}{2}   \left\langle \nabla_{\Sigma} u \left(\frac{y}{ \sqrt{-t} } \right) ,  \frac{y^T}{ \sqrt{-t} }  \right\rangle - \mu \, u \left( \frac{y}{ \sqrt{-t} } \right) \right]
	\, .
\end{align}
 If $y(t) \in \Sigma_t$ evolves by   MCF  $y_t = -\bH_{\Sigma_t}(y)$, then \eqr{e:Hsigt} gives
 \begin{align}	\label{e:ytderiv}
 	\partial_t \, \left( \frac{y}{ \sqrt{-t} } \right) = \frac{1}{2} \, (-t)^{ - \frac{3}{2} } \, y + \frac{y_t}{ \sqrt{-t} } = \frac{1}{2} \, (-t)^{ - \frac{3}{2} } \, y + \frac{y^{\perp}}{ 2t\, \sqrt{-t} }  = \frac{1}{ \sqrt{-t} } \, \left( \frac{y^T}{-2t} \right) \, .
 \end{align}
    Therefore, using the chain rule and then \eqr{e:ytderiv}
gives
    \begin{align}		\label{e:comparewith}
    	\partial_t \left[ v (y,t) \right] = \partial_t \left[ (-t)^{\mu} \, u \left(\frac{y}{\sqrt{-t}} \right) \right] &= (-t)^{\mu}  \left\langle \nabla_{\Sigma} u \left(\frac{y}{\sqrt{-t}} \right) , 
	\partial_t   \left( \frac{y}{ \sqrt{-t} } \right) \right\rangle -\mu (-t)^{\mu -1} \, u \left(\frac{y}{\sqrt{-t}} \right)    \notag \\
	&= (-t)^{\mu-1} \, \left\langle \nabla_{\Sigma} u \left(\frac{y}{\sqrt{-t}} \right) , 
	 \frac{y^T}{2\,\sqrt{-t}}  \right\rangle -\mu \, (-t)^{\mu -1} \,  \, u \left(\frac{y}{\sqrt{-t}} \right)
	\, .
    \end{align}
Combining \eqr{e:deltasigt} and \eqr{e:comparewith},  we see that $(\partial_t - \Delta_{\Sigma_t})\,v = 0$  on the MCF.
\end{proof}

 \subsection{Sharp polynomial growth of drift eigenfunctions}

  \begin{Lem}	\label{l:heatconvex}
  If $(\partial_t- \Delta_{M_t})\, w=0$ on a MCF $M_t$ and $q \geq 1$, then $ \left( \partial_t - \Delta_{M_t} \right) \, |w|^q \leq 0$.
  \end{Lem}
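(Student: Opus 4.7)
The statement is a standard Kato-type inequality; the main issue is that $|w|^q$ need not be $C^2$ at zeros of $w$ when $1 \le q < 2$, so some regularization is needed. The plan is first to do a pointwise computation on the smooth part $\{w \ne 0\}$, and then to handle the zero set by approximating $|w|^q$ by $u_\epsilon = (w^2 + \epsilon)^{q/2}$.

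\textbf{Step 1: Pointwise computation where $w \ne 0$.} On the open set where $w > 0$, write $|w|^q = w^q$ and use the chain rule
\begin{align}
(\partial_t - \Delta_{M_t})\,w^q &= q\, w^{q-1} \, (\partial_t - \Delta_{M_t})\, w \; - \; q(q-1)\, w^{q-2}\, |\nabla w|^2 \notag \\
&= - q(q-1)\, w^{q-2}\, |\nabla w|^2 \;\le\; 0,
\end{align}
since $(\partial_t - \Delta_{M_t}) w = 0$ and $q \geq 1$. The same calculation (with $w$ replaced by $-w$) gives the conclusion where $w < 0$.

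\textbf{Step 2: Regularization.} For $\epsilon > 0$, set $g = w^2 + \epsilon$ and $u_\epsilon = g^{q/2}$. Then $u_\epsilon$ is smooth in space-time and a direct computation, using only $(\partial_t - \Delta_{M_t})\,w = 0$ and $(\partial_t - \Delta_{M_t})\,w^2 = -2|\nabla w|^2$, yields
\begin{align}
(\partial_t - \Delta_{M_t})\, u_\epsilon \;=\; -\,q\, g^{\tfrac{q}{2}-2}\, |\nabla w|^2\, \bigl[(q-1)\, w^2 + \epsilon\bigr] \;\le\; 0,
\end{align}
because $q \ge 1$ makes the bracket nonnegative. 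Note that Step 1 is recovered by sending $\epsilon \to 0$ where $w \ne 0$.

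\textbf{Step 3: Passage to the limit.} As $\epsilon \to 0$, $u_\epsilon \to |w|^q$ locally uniformly, and $u_\epsilon$ is $C^2$ for each $\epsilon > 0$. Since each $u_\epsilon$ is a classical subsolution of the heat equation on the flow, the limit $|w|^q$ is a subsolution in the viscosity/distributional sense, which is what the inequality $(\partial_t - \Delta_{M_t})\, |w|^q \le 0$ means when $q < 2$ (for $q \ge 2$ the function is already $C^2$ and the inequality holds classically).

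\textbf{Main obstacle.} The only real subtlety is the failure of $C^2$ regularity of $|w|^q$ on the zero set of $w$; the $\epsilon$-regularization above resolves this cleanly. Everything else is a one-line chain-rule calculation, and the MCF context enters only through the interpretation of $\partial_t - \Delta_{M_t}$, which is compatible with the chain rule since it is a first-order operator in time and second-order spatial operator that commutes with pointwise composition rules on each time slice.
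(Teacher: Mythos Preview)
Your proof is correct and follows essentially the same route as the paper: the paper also writes $|w|^q$ as $v(w^2)$ with $v(s)=s^{q/2}$, computes $(\partial_t-\Delta_{M_t})\,v(w^2)=-2\bigl[v'(w^2)+2w^2v''(w^2)\bigr]\,|\nabla w|^2$, checks nonnegativity of the bracket for $q\ge 1$, and then remarks that ``the general case follows by approximation'' to handle the lack of smoothness at $s=0$. Your $\epsilon$-regularization $u_\epsilon=(w^2+\epsilon)^{q/2}$ is exactly a concrete implementation of that approximation step, so the two arguments are the same in substance, with yours spelling out the limiting argument more carefully.
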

  
  \begin{proof}
  Given any function $v: \RR \to \RR$, set $h = v(w^2)$.  
   Differentiating gives
  \begin{align}
  	h_t &= v'(w^2) \, 2 \, w \, w_t  \, , \\
	\nabla_{M_t} h &= v' (w^2) \, 2 \, w \, \nabla_{M_t} w \,  , \\
	\Delta_{M_t}\, h &= v' (w^2)\, (2\,|\nabla_{M_t} w|^2 + 2\, w \, \Delta_{M_t}\, w) + v''(w^2)\, 4\,w^2\, |\nabla_{M_t} w|^2 \, .
  \end{align}
  Therefore, using that $(\partial_t- \Delta_{M_t})\, w=0$, we have
  \begin{align}
  	h_t - \Delta_{M_t}\, h = - 2 \, \left[ v'(w^2)  +  2\, v''(w^2)\,  w^2 \right] \,  |\nabla_{M_t} w|^2 \, .
  \end{align}
  In particular, we have $h_t - \Delta_{M_t}\, h \leq 0$ as long as 
  \begin{align}	\label{e:wantit}
  	v'(s) + 2 \, s \, v''(s) \geq 0 \, .
  \end{align}
 Now, we set $v(s) = s^{ \frac{q}{2} }$ with $q\geq  1$, so that $v'(s)  =  \frac{q}{2} \, s^{ \frac{q-2}{2} }$ and 
	$v''(s) = \frac{q\,(q-2)}{4} \, s^{ \frac{q-4}{2} }$.  
Using this in \eqr{e:wantit}   gives
  \begin{align}
  	  v'(s) + 2 \, s \, v''(s) &=  \frac{q}{2} \, \left[  s^{ \frac{q-2}{2} }  + 2\, s \, \frac{(q-2)}{2} \, s^{ \frac{q-4}{2} }
	  \right]  =  \frac{q}{2} \,  s^{ \frac{q-2}{2} }  \left[  q-1
	  \right]
	  \, .
  \end{align}
  This is nonnegative for $q\geq 1$  as long as it is defined (i.e., $s > 0$ when $q$ is small).  The general case follows by approximation.
  \end{proof}

\begin{proof}[Proof of Theorem \ref{t:polyGu}]
Set $\Sigma_t = \sqrt{-t} \, \Sigma$ and let $M_t$ be the associated MCF.  By Lemma
\ref{l:sepvar}, the function
  $v(y,t) = (-t)^{\mu} \, u\left (\frac{y}{\sqrt{-t}} \right)  $ 
satisfies   $(\partial_t - \Delta_{M_t})\, v = 0$ on   $M_t$.  Thus, Lemma \ref{l:heatconvex} gives that 
$(\partial_t - \Delta_{M_t}) |v| \leq  0$ on   $M_t$, so the weighted monotonicity formula (theorem $4.13$  in \cite{E1}, cf. \cite{Hu}) applies to $|v|$.  Therefore, given any $x_0 \in \Sigma = \Sigma_{-1}$, we get for all $t< -1$ that
\begin{align}
	|u|(x_0) &= |v(x_0 , -1)| \leq (4\,\pi (-1-t))^{ - \frac{n}{2 } } \int_{\Sigma_{t}} |v(y,t)| \, \e^{ \frac{|y-x_0|^2}{4\,(t+1)} } \notag \\
	&= (-t)^{\mu} \, (4\,\pi (-1-t))^{ - \frac{n}{2 } } \int_{\Sigma_{t}} \left|u\left(\frac{y}{\sqrt{-t}}\right)\right| \, \e^{ \frac{|y-x_0|^2}{4\,(t+1)} } \, .
\end{align}
Making the change of variables $x= \frac{y}{\sqrt{-t}}$, we get
\begin{align}
	|u|(x_0) & =   \frac{(-t)^{ \mu + \frac{n}{2}} }{(4\pi (-1-t))^{  \frac{n}{2 } }} \int_{\Sigma} |u(x)| \, \e^{ \frac{|\sqrt{-t}  x-x_0|^2}{4(t+1)} }
	=   \frac{(-t)^{ \mu} }{(4\,\pi\, (1+t^{-1}))^{  \frac{n}{2 } }} \int_{\Sigma} |u(x)| \, \e^{ -\frac{\left| x-\frac{x_0}{\sqrt{-t} } \right|^2}{4\,(1+t^{-1})} } \, .
\end{align}
We will take $t<-4$.  Using this and expanding the square
  gives  
\begin{align}	\label{e:backinherejack}
	|u|(x_0) & \leq    
	  (-t)^{ \mu}  \int_{\Sigma} |u|\,  \e^{ \frac{\langle x , \frac{x_0}{\sqrt{-t} } \rangle }{2\,(1+t^{-1})} } \, \e^{ -\frac{ | x|^2    }{4(1+t^{-1})} } \, .
\end{align}
We will apply the Cauchy-Schwarz inequality to the last term, writing the integrand as the product of $|u| \, \e^{ - \frac{|x|^2}{8}} $ and
$ \e^{ \frac{\langle x , \frac{x_0}{\sqrt{-t} } \rangle }{2\,(1+t^{-1})} } \, \e^{ -\frac{ (1-t^{-1})| x|^2    }{8\,(1+t^{-1})} }$.  The first term just gives $\| u \|_{L^2}$, as desired.
To bound the second term, we use the absorbing inequality 
\begin{align}
	 \left| \left\langle x , \frac{x_0}{\sqrt{-t} } \right\rangle \right| \leq \frac{|x|^2}{8} + 2\, \frac{|x_0|^2}{-t}  \, .
\end{align}
By section $1$ in \cite{CM6} we can bound $\Vol (B_s \cap M_t)$ in terms of a dimensional constant times the entropy times $s^n$.  Combining this all gives
\begin{align}
	\int_{\Sigma} \e^{ \frac{\langle x , \frac{x_0}{\sqrt{-t} } \rangle }{(1+t^{-1})} } \, \e^{ -\frac{ (1-t^{-1})| x|^2    }{4\,(1+t^{-1})} } \leq
	 \e^{  2\, \frac{|x_0|^2}{ -t-1 } } \, \int_{\Sigma} \e^{ -\frac{ (1-2\,t^{-1})| x|^2    }{8\,(1+t^{-1})} } \leq 
	  \e^{   \frac{2\,|x_0|^2}{ -t-1 } } \, \int_{\Sigma} \e^{ -\frac{ | x|^2    }{8 } } \leq C_n \, \lambda (\Sigma) \,  \e^{   \frac{2\,|x_0|^2}{ -t-1 } } 
	\, .
\end{align}
Finally, using this back in \eqr{e:backinherejack}   and taking $t=-4-|x_0|^2$
\begin{align}	 
	u^2(x_0) & \leq    
	  (-t)^{ 2\,\mu} \, \| u \|_{L^2}^2 \, C_n \, \lambda (\Sigma) \,  \e^{   \frac{2\,|x_0|^2}{ -t-1 } } \leq (|x_0|^2 + 4)^{2\,\mu} \, 
	   \| u \|_{L^2}^2 \, C_n \, \lambda (\Sigma) \, \e^2 
 \, .
\end{align}
\end{proof}

\begin{Cor}	\label{c:polyGu}
 If $\cL\, u = - \mu \, u$ on a shrinker $\Sigma^n \subset \RR^N$ and $\| u \|_{L^2} < \infty$, then 
  $v$ given on   $\Sigma_t=\sqrt{-t}\,\Sigma$'s by $v(y,t) = (-t)^{\mu} \, u \left(\frac{y}{\sqrt{-t}} \right)$
is in $\cP_{2\,\mu}$ and satisfies
 \begin{align}
 	v^2(y,t) \leq C_n \, \lambda (\Sigma) \,  \| u \|^2_{L^2(\Sigma)} \, (-4\,t + |y|^{2})^{2\,\mu} \, .
 \end{align}
 \end{Cor}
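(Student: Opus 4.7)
The corollary packages Theorem \ref{t:polyGu} and Lemma \ref{l:sepvar} into a statement about the rescaled function $v$. Since the hard analytic work has already been carried out upstream, my plan is to deduce both conclusions—membership in $\cP_{2\mu}$ and the pointwise bound—by direct substitution, without any new estimates.

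First, I would record that $v$ is caloric on the shrinking MCF. This is exactly the content of Lemma \ref{l:sepvar}: writing $\Sigma_t = \sqrt{-t}\,\Sigma$ and letting $M_t$ be the associated MCF, one has $(\partial_t - \Delta_{M_t})\,v = 0$. So the only remaining task for $v \in \cP_{2\mu}$ is the polynomial growth estimate, and this will follow from the pointwise bound.

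Next I would derive the pointwise bound by substitution. Theorem \ref{t:polyGu} applied at the point $x = y/\sqrt{-t} \in \Sigma$ gives
\begin{equation}
u^2\!\left(\frac{y}{\sqrt{-t}}\right) \leq C_n \, \lambda(\Sigma)\, \|u\|_{L^2(\Sigma)}^2 \, \left(4 + \frac{|y|^2}{-t}\right)^{2\mu}.
\end{equation}
Multiplying by $(-t)^{2\mu}$ and absorbing this factor into the parenthesis,
\begin{equation}
v^2(y,t) = (-t)^{2\mu}\, u^2\!\left(\frac{y}{\sqrt{-t}}\right) \leq C_n \, \lambda(\Sigma)\, \|u\|_{L^2(\Sigma)}^2 \, \bigl(-4t + |y|^2\bigr)^{2\mu},
\end{equation}
which is exactly the claimed inequality.

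Finally, taking square roots and using $(-4t + |y|^2)^{\mu} \leq 2^{\mu}\bigl(|t|^{\mu} + |y|^{2\mu}\bigr)$ produces a bound of the form $|v(y,t)| \leq C\bigl(1 + |y|^{2\mu} + |t|^{\mu}\bigr)$, which is precisely the growth condition in the definition of $\cP_d$ with $d = 2\mu$. Combined with the caloric property from Lemma \ref{l:sepvar}, this shows $v \in \cP_{2\mu}$ and completes the proof. There is essentially no obstacle here beyond bookkeeping: the substance of the argument is Theorem \ref{t:polyGu}, and the only thing to check is that the two powers of $(-t)$ combine cleanly to convert $(4 + |y|^2/(-t))^{2\mu}$ into $(-4t + |y|^2)^{2\mu}$.
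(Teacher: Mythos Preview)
Your proof is correct and follows exactly the approach the paper takes: the paper's own proof is the single sentence ``This follows by combining Theorem~\ref{t:polyGu} and Lemma~\ref{l:sepvar},'' and you have simply spelled out the substitution and the verification of the $\cP_{2\mu}$ growth condition in detail.
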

 
 \begin{proof}
 This follows by combining Theorem \ref{t:polyGu} and Lemma \ref{l:sepvar}.  
 \end{proof}

   \section{Growth and Gaussian inner products}
    
     In  this section, $M_t^n \subset \RR^N$ is an ancient MCF with finite entropy and  $\phi = \bH + \frac{x^{\perp}}{2t}$.  We will study the growth of 
     caloric functions on $M_t$ in the Gaussian $L^2$ norm.  The key result, inspired by \cite{CM2}, uses  linear independence and polynomial growth to produce orthonormal caloric functions with a fixed doubling property.  This will be used in the next section to bound $\dim \cP_d$  and then 
     used   later for sharp bounds on $\cP_1$.

     We will need  
the weighted Huisken monotonicity formula (theorem $4.13$ in \cite{E1}, cf. \cite{Hu}) 
 \begin{align} \label{e:eckerH}
  \frac{d}{dt} \, \left\{   \left( -4\pi \, t \right)^{ - \frac{n}{2} } \, \int_{M_t} v \, \e^{ \frac{|x|^2}{4t} }  \right\} = \left( -4\pi \, t \right)^{ - \frac{n}{2} } \, \int_{M_t} \left\{ (v_t - \Delta\, v) - v \, \left| \phi \right|^2      \right\}\, \e^{ \frac{|x|^2}{4t} }   \, .
 \end{align}

   \subsection{Polynomial growth}
   
   Given $u$, $v \in    L^2(M_t)$, define a bilinear form $J$ and associated quadratic form $I$  by 
 \begin{align}
         J_t (u , v) &=  \left( -4\pi \, t \right)^{ - \frac{n}{2} } \, \int_{M_t} u \, v \, \e^{ \frac{|x|^2}{4t} }\, , \\
  I_u (t) &= \left( -4\pi \, t \right)^{ - \frac{n}{2} } \, \int_{M_t} u^2 \, \e^{ \frac{|x|^2}{4t} } \, .
 \end{align}

The next lemma shows that $I_u$ is monotone and grows polynomially when $u \in \cP_d$.
 
 \begin{Lem}	\label{l:iprimelem}
 If  $u \in \cP_d$, then there exists $C_{u,n,d}$  so that
 \begin{align}
 I_u (t)&\leq  C_{u,n,d} \, \lambda (M_t) \, (1-t)^d \, ,\\
  I_u' (t) &= \left( -4\pi \, t \right)^{ - \frac{n}{2} } \, \int_{M_t} \left\{ -2 \, |\nabla u|^2 - u^2 \, |\phi |^2 \right\}  \, \e^{ \frac{|x|^2}{4t} } \leq 0  \, . \label{e:Ipri}
\end{align}
\end{Lem}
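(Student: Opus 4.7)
The proof has two essentially independent parts; the first is a direct Gaussian integral estimate, and the second is an application of the weighted Huisken monotonicity \eqref{e:eckerH} to $v=u^2$, after verifying that the polynomial growth is strong enough to make everything integrable.

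\textbf{Step 1 (bounding $I_u$).} Since $u\in\cP_d$, squaring gives $u^2\le 3C^2(1+|x|^{2d}+|t|^d)$. Rewriting the integrand as $(-4\pi t)^{-n/2}e^{|x|^2/(4t)}=(-4\pi t)^{-n/2}e^{-|x|^2/(-4t)}$ and changing variables $y=x/\sqrt{-t}$ identifies
\begin{equation*}
  (-4\pi t)^{-n/2}\int_{M_t} e^{|x|^2/(4t)} = F\!\left(\tfrac{1}{\sqrt{-t}}M_t\right)\le \lambda(M_t).
\end{equation*}
For the term with $|x|^{2d}$ I would split the Gaussian in half and use the pointwise bound
\begin{equation*}
  |x|^{2d}\,e^{|x|^2/(8t)}=|x|^{2d}\,e^{-|x|^2/(-8t)}\le (-8dt)^{d}e^{-d},
\end{equation*}
which follows by maximizing $r^{2d}e^{-r^2/(-8t)}$ in $r$. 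What remains of the Gaussian weight is $e^{|x|^2/(8t)}$, whose integral with the weight $(-8\pi t)^{-n/2}$ equals $F(M_t/\sqrt{-2t})\le\lambda(M_t)$, so a factor of $2^{n/2}$ is the only price. Combining these gives the desired $I_u(t)\le C_{u,n,d}\,\lambda(M_t)\,(1-t)^d$; the $(1-t)^d$ absorbs the $1+|t|^d$ from the polynomial bound and the $(-t)^d$ from the $|x|^{2d}$ integral.

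\textbf{Step 2 (computing $I_u'$).} I would apply \eqref{e:eckerH} with $v=u^2$. Since $(\partial_t-\Delta_{M_t})u=0$, a direct computation yields
\begin{equation*}
  (u^2)_t-\Delta_{M_t}(u^2)=2u\,u_t-2u\,\Delta u - 2|\nabla u|^2=-2|\nabla u|^2.
\end{equation*}
Plugging into \eqref{e:eckerH} immediately gives the stated formula for $I_u'(t)$, and both $-2|\nabla u|^2$ and $-u^2|\phi|^2$ are pointwise nonpositive, so $I_u'\le 0$.

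\textbf{Main obstacle.} The only real issue is justifying the application of the monotonicity formula, i.e.\ showing that the relevant integrals are finite and that differentiation under the integral sign is legitimate. The polynomial growth of $u$, together with the finite entropy hypothesis, should be just enough: the Gaussian estimates from Step 1 (applied to $u^2$, $u\,u_t$, and, via a standard Bernstein-type interpolation along the flow, to $|\nabla u|^2$) control everything on compact time intervals. Once integrability is in hand, the computation above is a routine consequence of \eqref{e:eckerH}.
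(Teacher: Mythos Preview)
Your proposal is correct and follows essentially the same approach as the paper. For the growth bound the paper makes the substitution $y=x/\sqrt{-t}$ and bounds $\int_{M_t/\sqrt{-t}}(1+(-t)^{d/2}|y|^d)^2\,\e^{-|y|^2/4}$ by $C_{n,d}\,\lambda(M_t)\,(1-t)^d$, whereas you split the Gaussian and use the pointwise maximum of $|x|^{2d}\e^{|x|^2/(8t)}$; these are equivalent manipulations. For $I_u'$ the paper simply invokes \eqref{e:eckerH} with $v=u^2$, exactly as you do, and does not spell out the integrability justification you sketch.
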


\begin{proof}
Since $u \in \cP_d$, there is some $C_u$  so that $ |u(x,t)| \leq C_u \, (1 + |x|^d+|t|^{\frac{d}{2}})$ and, thus,
\begin{align} \label{e:Iupolyd}
I_u (t) &\leq C_u \, (-t)^d \,  \left( -4\pi \, t \right)^{ - \frac{n}{2} } \, \int_{M_t}   \e^{ \frac{|x|^2}{4t} } + C_u \, \left( -4\pi \, t \right)^{ - \frac{n}{2} } \, \int_{M_t} (1+|x|^d)^2 \, \e^{ \frac{|x|^2}{4t} } \notag \\
&\leq C_u \, \lambda (M_t)  \, (-t)^d + C_u \, \left(  4\pi \right)^{ - \frac{n}{2} } \, \int_{ \frac{M_t}{\sqrt{-t}}} (1+ (-t)^{\frac{d}{2}} \, |y|^d)^2 \, \e^{ - \frac{|y|^2}{4} } \leq  C_{u,n,d} \, \lambda (M_t) \, (1-t)^d \, ,
\end{align}
where $C_{u,n,d}$ depends on $u$, $n$ and $d$ but not on $t$.  
 Applying \eqr{e:eckerH} with $v=u^2$ gives \eqr{e:Ipri}.
\end{proof}

 \subsection{General constructions}
 
  Let $u_0 \equiv 1, \, u_1 , \dots , u_{\ell} \in \cP_d (M_t)$ be linearly independent.  These are independent, but not necessarily orthogonal.  To separate them,  we will  use ideas introduced in section $4$ in \cite{CM2} for studying   harmonic functions.
  
  Following definition $4.2$ in \cite{CM2}, for each $t_0$ we set $w_{0,t_0} = u_0 = 1$ and then inductively define $w_{i,t_0}$ by
  choosing coefficients  $\lambda_{j,i}(t_0) \in \RR$ so that 
  \begin{align}
  	 w_{i,t_0} \equiv u_i  - \sum_{j=0}^{i-1} \lambda_{j,i}(t_0) \, u_j 
\end{align}
 is $J_{t_0}$-orthogonal to $u_0 , \dots , u_{i-1}$.
Finally, 
 set $f_i (t_0) = I_{w_{i,t_0}}(t_0)$.  
 
 Following proposition $4.7$ in \cite{CM2}, we get the following properties:
 \begin{enumerate}
 \item If $t_0 \leq t_1$, then 
$
 	f_i (t_1) =  I_{w_{i,t_1}} (t_1) \leq I_{w_{i,t_0}} (t_1) \leq f_i (t_0)$.
	  \item For each $i$, there exist $T_i$ and $C_i$ so that  for $t \leq T_i$ we have $0 < f_i (t) \leq C_i \, (1 -t)^d$.
 \end{enumerate}
 
 \vskip2mm
 The next lemma is a variation on proposition $4.16$ in \cite{CM2}   adapted to our situation:
 
 \begin{Lem}	\label{l:goodm}
 Given $\delta > 0$ and $\Omega > 1$, there exist $m_{q} \to \infty$ so that $v_1 , \dots , v_{\ell}$ defined by $v_i = \frac{ w_{i,-\Omega^{m_q +1}}}{ \sqrt{f_i (-\Omega^{m_q +1})}}$ satisfy
  \begin{align}	\label{e:goodm}
   J_{-\Omega^{m_q+1}} (v_i , v_j) = \delta_{ij}  
   {\text{ and 
}}
 	\sum_{i=1}^{\ell} I_{v_i} (-\Omega^{m_q}) \geq \ell \, \Omega^{-d-\delta} \, .
 \end{align}
 \end{Lem}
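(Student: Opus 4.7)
The plan is to treat the two conclusions separately. The orthonormality $J_{-\Omega^{m_q+1}}(v_i,v_j) = \delta_{ij}$ is immediate from the Gram--Schmidt construction: the $w_{i,t_0}$ are pairwise $J_{t_0}$-orthogonal with $J_{t_0}(w_{i,t_0},w_{i,t_0}) = f_i(t_0)$, so dividing by $\sqrt{f_i(-\Omega^{m_q+1})}$ gives the claim for any choice of $m_q$. The real content is the sum estimate, and the starting point is that $w_{i,-\Omega^{m+1}}$ is a competitor in the minimization defining $w_{i,-\Omega^m}$ (both are of the form $u_i - \sum_{j<i} c_j u_j$). Combining this minimality with the monotonicity of $I$ from Lemma~\ref{l:iprimelem} gives
\begin{equation*}
I_{v_i}(-\Omega^m) \;=\; \frac{I_{w_{i,-\Omega^{m+1}}}(-\Omega^m)}{f_i(-\Omega^{m+1})} \;\geq\; \frac{I_{w_{i,-\Omega^m}}(-\Omega^m)}{f_i(-\Omega^{m+1})} \;=\; \frac{f_i(-\Omega^m)}{f_i(-\Omega^{m+1})},
\end{equation*}
so it suffices to exhibit $m_q \to \infty$ with $\sum_i f_i(-\Omega^{m_q})/f_i(-\Omega^{m_q+1}) \geq \ell\,\Omega^{-d-\delta}$.

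To produce such $m_q$, set $r_i(m) = f_i(-\Omega^m)/f_i(-\Omega^{m+1})$; by monotonicity of $f_i$ each $r_i(m) \leq 1$. Pick $m_0$ large enough that property (2) applies simultaneously to all the finitely many $i = 1,\dots,\ell$, so that $0 < f_i(-\Omega^m) \leq C_i(1+\Omega^m)^d$ for $m \geq m_0$. Telescoping,
\begin{equation*}
\prod_{m=m_0}^{M-1} \prod_{i=1}^\ell r_i(m) \;=\; \prod_{i=1}^\ell \frac{f_i(-\Omega^{m_0})}{f_i(-\Omega^M)} \;\geq\; \prod_{i=1}^\ell \frac{f_i(-\Omega^{m_0})}{C_i(1+\Omega^M)^d}.
\end{equation*}
Taking logarithms, the sum $\sum_{m=m_0}^{M-1} \log\prod_i r_i(m)$ is bounded below by a constant minus $\ell d \log(1+\Omega^M)$, so its average over $m \in \{m_0,\dots,M-1\}$ tends to $-\ell d\log\Omega$ as $M\to\infty$.

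By averaging, for every sufficiently large $M$ there is some $m \in \{m_0,\dots,M-1\}$ with $\prod_i r_i(m) \geq \Omega^{-\ell(d+\delta)}$, and AM--GM upgrades this to $\sum_i r_i(m) \geq \ell\,\Omega^{-d-\delta}$. To obtain a sequence $m_q \to \infty$, I would iterate: after selecting such an $m_q$, re-start the argument with $m_0$ replaced by $m_q+1$ to pick up an even larger $m_{q+1}$. This forces the selected indices off to infinity and delivers the required sequence.

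The heart of the argument is the tension between the two properties of $f_i$: monotonicity forces every individual ratio $r_i(m) \leq 1$, so one cannot hope that any single step is significantly better than break-even, while the polynomial cap $f_i(t) \leq C_i(1-t)^d$ prevents the telescoped product from decaying faster than $\Omega^{-\ell d}$ per step in a time-averaged sense. Matching these opposing constraints through the telescoping identity is precisely what produces the sharp exponent $-(d+\delta)$ and is the step that genuinely requires care.
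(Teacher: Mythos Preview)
Your proof is correct and follows essentially the same approach as the paper. Both arguments reduce to controlling the product $\prod_i f_i(-\Omega^m)$: the paper sets $a_m = \prod_i f_i(-\Omega^m)$ and argues by contradiction that if $a_{m+1} > \Omega^{d\ell+\delta}a_m$ held for all large $m$ then $a_m$ would outgrow its polynomial bound, while you telescope and average logarithms to reach the same conclusion constructively; both then finish with AM--GM.
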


  \begin{proof}
  By (1) and (2), the sequence $a_m = \Pi_{i=1}^{\ell} f_i (-\Omega^m)$ is non-decreasing.  By (2) positive for $m$ large, and   $a_m \leq C \, (1+\Omega^m)^{d\, \ell}$.  Therefore, there must exist $m_{q} \to \infty$ where
  \begin{align}	\label{e:polygrow}
  	a_{m_q +1} \leq  \Omega^{d\, \ell + \delta} \, a_{m_q}  \, .
  \end{align}
  If this was not the case, then we would get some $\bar{m}$ so that $a_{m+1} \geq \Omega^{d\, \ell+ \delta} \, a_m$ for every $m \geq \bar{m}$.  Iterating this forces $a_m$ to grow and, eventually, contradict
  $a_m \leq C \, (1+\Omega^m)^{d\, \ell}$.
  
  We will show    \eqr{e:goodm} holds  for $m_q$ satistying \eqr{e:polygrow}.  Namely, (1) and \eqr{e:polygrow} give
  \begin{align}
  	\prod_{i=1}^{\ell}  \, I_{v_i}(-\Omega^{m_q}) \geq \prod_{i=1}^{\ell} \, \frac{ f_i (-\Omega^{m_q})}{ f_i (-\Omega^{m_q+1})} = \frac{a_{m_q}}{a_{m_q+1}} \geq \Omega^{-d\, \ell - \delta} \, .
  \end{align}
  Finally, combining this with the arithmetic-geometric mean inequality gives
  \begin{align}
  	\frac{1}{\ell} \, \sum_{i=1}^{\ell} I_{v_i} (-\Omega^{m_q}) \geq \left( \prod_{i=1}^{\ell}  \, I_{v_i}(-\Omega^{m_q} ) \right)^{ \frac{1}{\ell} } \geq \Omega^{-d - \frac{ \delta}{\ell} }  \, .
  \end{align}
  \end{proof}

       \subsection{Localization}
 
We will need the following localization inequality:
 
 \begin{Lem}	\label{l:localizeMt}
 Given any function $u$ on $M_t$, we have
 \begin{align}
 	 \frac{1}{-t} \int_{M_t} |x|^2 \, u^2 \,  \e^{   \frac{|x|^2}{4t} } \leq 4n \, \int_{M_t}   u^2 \,  \e^{   \frac{|x|^2}{4t} } - 4t \, \int_{M_t}  \left( 4 \, |\nabla u|^2 + u^2 \, |\phi|^2 \right)  \e^{   \frac{|x|^2}{4t} } \, .
 \end{align}
 \end{Lem}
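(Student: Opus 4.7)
The plan is to generalize the shrinker identity of Lemma \ref{l:lemmamu} to the ancient MCF setting; the $|\phi|^2$ correction should arise naturally because $\phi$ is precisely the deviation of $M_t$ from a self-shrinker at time $t$, and must vanish in the self-shrinker case. I would start by computing the divergence of the weighted vector field $u^2\, x^T\, \e^{|x|^2/(4t)}$ on $M_t$, justifying Stokes' theorem with a compactly supported cutoff as in Lemma \ref{l:lemmamu}; the entropy bound of $M_t$ together with the Gaussian weight $\e^{|x|^2/(4t)}$ (with $t<0$) ensures convergence of the integrals and vanishing of cutoff boundary terms in the limit. The three factors of the divergence produce $2u\,\nabla u\cdot x^T$, $u^2\,\dv_{M_t}(x^T)$, and $u^2\,|x^T|^2/(2t)$. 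Using $\nabla_{M_t}(|x|^2/2) = x^T$ and, in the paper's sign convention, $\Delta_{M_t} x = -\bH$, one has $\dv_{M_t}(x^T) = \Delta_{M_t}(|x|^2/2) = n - \langle x, \bH\rangle$.

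Next I would rewrite the $\bH$ contribution via $\bH = \phi - x^\perp/(2t)$. Since $\bH$ is normal, $\langle x, \bH\rangle = \langle x^\perp, \phi\rangle - |x^\perp|^2/(2t)$. Substituting into the integrated identity, multiplying by $-2t > 0$, and then adding $\frac{1}{-t}\int u^2 |x^\perp|^2 \e^{|x|^2/(4t)}$ to both sides so as to complete $|x|^2 = |x^T|^2 + |x^\perp|^2$ on the left, everything aligns to yield the clean identity
\begin{align*}
\frac{1}{-t}\int u^2 |x|^2\, \e^{|x|^2/(4t)} = 4\int u\,\nabla u\cdot x^T\, \e^{|x|^2/(4t)} + 2n\int u^2\, \e^{|x|^2/(4t)} - 2\int u^2 \langle x^\perp, \phi\rangle\, \e^{|x|^2/(4t)}.
\end{align*}

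The final step is absorbing the two cross terms back into the LHS. Applying $4|u\,\nabla u\cdot x^T| \leq \frac{u^2|x^T|^2}{2(-t)} + 8(-t)|\nabla u|^2$ and $2|u^2\langle x^\perp, \phi\rangle| \leq \frac{u^2|x^\perp|^2}{2(-t)} + 2(-t) u^2 |\phi|^2$, the $|x^T|^2$ and $|x^\perp|^2$ pieces combine to exactly $\frac{u^2|x|^2}{2(-t)}$; moving this to the left halves its coefficient, so multiplying by $2$ and rewriting $-4t = 4(-t)$ delivers the stated bound. The proof is largely mechanical; the main care needed is sign bookkeeping in the rewrite and confirming that the cutoff boundary terms vanish just as in the shrinker setting. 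No deep obstacle is expected, since the entropy bound provides exactly the integrability needed for the approximation argument.
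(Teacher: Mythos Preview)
Your proposal is correct and follows essentially the same approach as the paper: both compute the divergence of $u^2\, x^T\, \e^{|x|^2/(4t)}$, rewrite the $\langle x^{\perp}, \bH\rangle$ term via $\bH = \phi - x^{\perp}/(2t)$ to produce the identical identity $\frac{1}{-t}\int u^2|x|^2\,\e^{|x|^2/(4t)} = \int (4u\langle\nabla u, x^T\rangle + 2n\,u^2 - 2u^2\langle x^{\perp},\phi\rangle)\,\e^{|x|^2/(4t)}$, and then apply the same pair of absorbing inequalities. Your explicit mention of the cutoff justification is more careful than the paper, which simply invokes the divergence theorem.
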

 
 \begin{proof}
 Using that $(\partial_t - \Delta_{M_t}) |x|^2 = - 2n$ and $x_t = - \bH$ on $M_t$, we get
 \begin{align}	\label{e:dvMt}
 	2\, \e^{ -  \frac{|x|^2}{4t} } \,  \dv_{M_t} \, \left( u^2 \, x^T \, \e^{   \frac{|x|^2}{4t} } \right) &= 4 \, u \, \langle \nabla u , x^T \rangle + u^2 \, \left( \Delta_{M_t} |x|^2 + \frac{|x^T|^2}{t} \right) \notag \\
	&= 4 \, u \, \langle \nabla u , x^T \rangle + u^2 \, \left( 2n - 2 \langle x^{\perp} , \bH \rangle + \frac{|x^T|^2}{t} \right)  \\
	&= 4 \, u \, \langle \nabla u , x^T \rangle + u^2 \, \left( 2n - 2 \langle x^{\perp} , \phi \rangle + \frac{|x|^2}{t} \right) 
	\, . \notag
 \end{align}
 Using the absorbing inequality twice gives 
 \begin{align}
 	\left| 4 \, u \, \langle \nabla u , x^T \rangle - 2\, u^2  \langle x^{\perp} , \phi \rangle  \right|  &\leq \frac{u^2 \, |x^T|^2}{2|t|} - 8t \, |\nabla u|^2 + \frac{u^2\,  |x^{\perp}|^2}{2|t|} - 2 \, t \, u^2 \, |\phi|^2  \notag \\
	&=   \frac{u^2\, |x|^2}{2|t|} - 8t \, |\nabla u|^2   - 2 \, t \, u^2 \, |\phi|^2 \, .
 \end{align}
Inserting this in \eqr{e:dvMt} and applying the divergence theorem gives the lemma.
  \end{proof}

 \section{Sharp bounds for $\dim \cP_d$ on an ancient MCF}
 
 In this section,  $M_t^n \subset \RR^N$ is an ancient MCF with $\lambda (M_t) \leq \lambda_0$ for all $t$.
   We will  need the following local meanvalue inequality (proposition $2.1$ in \cite{E2}; cf. \cite{Hu}):

\begin{Lem}	\label{l:mvi}
There exists $c$ depending on $n$ so that if $(\partial_t - \Delta)\, u = 0$, then for any $\rho > 0$
\begin{align}
	u^2 (x_0 , t_0) \leq \frac{c}{\rho^{n+2}} \, \int_{t_0 -\rho^2}^{t_0} \int_{B_{\rho}(x_0) \cap M_t} u^2 \, .
\end{align}
\end{Lem}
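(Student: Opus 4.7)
The plan is to prove this standard local parabolic mean value inequality for caloric functions on a MCF via Moser iteration, combining the subsolution property of powers of $|u|$ with the Michael--Simon Sobolev inequality on submanifolds of Euclidean space. This is exactly the scheme carried out by Ecker in his regularity monograph \cite{E2}.

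First, I would upgrade $u$ to a family of subsolutions. By the computation in Lemma \ref{l:heatconvex}, for any $q \geq 1$ the function $w = |u|^q$ satisfies $(\partial_t - \Delta_{M_t}) w \leq 0$ on the MCF, with the extra good term $-q(q-1)|u|^{q-2} |\nabla u|^2$ on the right-hand side. This good term is what will drive the iteration. Next, I would fix a cutoff function $\eta(x,t)$ that is compactly supported in the parabolic cylinder $B_{\rho_k}(x_0) \times (t_0 - \rho_k^2, t_0]$, with $|\nabla \eta| + |\partial_t \eta|^{1/2} \lesssim (\rho_k - \rho_{k+1})^{-1}$ for a sequence of shrinking radii $\rho_k \downarrow \rho/2$. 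Testing the subsolution inequality against $\eta^2 w$ and integrating over $M_t$, one must be careful because the domain is moving: differentiating $\int_{M_t} \eta^2 w^2$ in time produces the extra boundary term $-\int \eta^2 w^2 |\bH|^2$ from the evolution of the volume form, which has a favorable sign and can simply be discarded.

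The result of this testing is a standard Caccioppoli inequality
\begin{align*}
\sup_{t \in [t_0-\rho_{k+1}^2, t_0]} \int_{M_t} \eta^2 w^2 + \int\!\!\int \eta^2 |\nabla w|^2 \leq \frac{C}{(\rho_k - \rho_{k+1})^2} \int_{t_0-\rho_k^2}^{t_0}\!\int_{B_{\rho_k} \cap M_t} w^2.
\end{align*}
Now I would invoke the Michael--Simon Sobolev inequality on the submanifolds $M_t \subset \RR^N$, which gives $\|f\|_{L^{n/(n-1)}} \leq C_n (\|\nabla f\|_{L^1} + \| f \bH \|_{L^1})$. Combining this with the Caccioppoli inequality above (applied to $w^{p}$ in place of $w$ for varying $p$, and interpolating in time) yields the standard reverse-H\"older-type estimate
\begin{align*}
\left(\int\!\!\int_{Q_{\rho_{k+1}}} w^{2\gamma}\right)^{1/\gamma} \leq \frac{C}{(\rho_k - \rho_{k+1})^{n+2}} \int\!\!\int_{Q_{\rho_k}} w^2,
\end{align*}
where $\gamma = 1 + 2/n > 1$ and $Q_\rho = (B_\rho \cap M_t) \times (t_0 - \rho^2, t_0]$. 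The contribution of the mean curvature term $|\bH|$ from Michael--Simon is absorbed either by using $|\bH|^2 \leq |\phi|^2 + |x|^2/(4t^2)$ on the bounded set $B_\rho$ together with Huisken's monotonicity, or simply by applying Michael--Simon to $\eta w$ so that $|\bH|$ gets absorbed into the $|\nabla|$ term via $|\nabla_{M_t}(\eta w)| \gtrsim |\eta w \bH|$ on a subset.

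Finally, Moser iteration: choose $\rho_k = \rho(1/2 + 1/2^{k+1})$ and iterate the reverse-H\"older inequality for $p_k = \gamma^k$, so that $\sup_{Q_{\rho/2}} w^2$ is bounded by $C \rho^{-(n+2)} \int\!\!\int_{Q_\rho} w^2$ after telescoping the geometric series for the radii. Evaluating at $(x_0, t_0)$ yields the claimed inequality. The main obstacle, as usual for moving-domain Moser iteration, is bookkeeping the mean curvature corrections from Michael--Simon and from the evolution of the volume form; both are benign on a MCF and the details are routine once one follows the framework of \cite{E2}.
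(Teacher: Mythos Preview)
The paper does not give its own proof of this lemma; it is quoted directly as proposition~2.1 of \cite{E2}, so there is nothing to compare against except Ecker's original argument. Your Moser iteration scheme via the Michael--Simon Sobolev inequality is exactly Ecker's route, and the overall structure you describe is correct.

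One point in your sketch needs repair. You observe that differentiating $\int_{M_t} \eta^2 w^2$ in time produces the good term $-\int \eta^2 w^2 |\bH|^2$ from the evolution of the volume form, and then say it ``can simply be discarded.'' Do not discard it: this is precisely the term that absorbs the mean curvature contribution $\int |\bH|\,(\eta w)^2$ arising from Michael--Simon, via Young's inequality
\[
|\bH|\,(\eta w)^2 \leq \epsilon\, |\bH|^2 (\eta w)^2 + \epsilon^{-1} (\eta w)^2 \, .
\]
Your two alternative suggestions for handling the $|\bH|$ term both fail as stated. Invoking $|\bH|^2 \leq |\phi|^2 + |x|^2/(4t^2)$ together with Huisken monotonicity brings in global quantities not available in this purely local statement (and $|\phi|$ need not be small). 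The claimed pointwise inequality $|\nabla_{M_t}(\eta w)| \gtrsim |\eta w\,\bH|$ has no basis. Once you retain the good $-\int \eta^2 w^2 |\bH|^2$ term on the left of the Caccioppoli estimate and use it to absorb the Michael--Simon mean curvature term as above, the rest of the iteration goes through exactly as you outline.
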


 \begin{proof}[Proof of Theorem \ref{t:maria}]
 Suppose that $d\geq 1$ and $u_0 \equiv 1 , u_1 , \dots , u_{\ell}$ are linearly independent functions in $\cP_d (M_t)$.  We will prove that there is a constant $C_n$ so that
$
 	\ell \leq C_n \, \lambda_0 \, d^n$.
	
	  The first step is to apply Lemma \ref{l:goodm} with 
 $\Omega = 1 + \frac{3}{d}$ and $\delta = d$ to get $m_{q} \to \infty$ so that $v_1 , \dots , v_{\ell}$ defined by $v_i = \frac{ w_{i,-\Omega^{m_q +1}}}{ \sqrt{f_i (-\Omega^{m_q +1})}}$ satisfy
  \begin{align}	\label{e:goodm2}
   J_{-\Omega^{m_q+1}} (v_i , v_j) = \delta_{ij}  
   {\text{ and 
}}
 	\sum_{i=1}^{\ell} I_{v_i} (-\Omega^{m_q}) \geq \ell \, \Omega^{-2d} \geq \e^{-6} \, \ell  \, .
 \end{align}
Integrating $I_{v_i}'$ from $-(1+ 1/d)\Omega^{m_q}$ to $-\Omega^{m_q}$, there exists $t_0 \in [-(1+ 1/d)\Omega^{m_q} , -\Omega^{m_q}]$ with
\begin{align}	\label{e:getlocali}
	\frac{\Omega^{m_q}}{d}  \,\sum_{i=1}^{\ell} \left( -4\pi \,t_0 \right)^{ - \frac{n}{2} } \, \int_{M_{t_0}} \left\{  2 \, |\nabla v_i|^2 + v_i^2 \, |\phi |^2 \right\}  \, \e^{ \frac{|x|^2}{4t_0} }  &=  \frac{\Omega^{m_q}}{d} \,  \, \sum_{i=1}^{\ell} \left| I'_{v_i}\right| (t_0) \notag \\
	&\leq \int_{-(1+ 1/d)\Omega^{m_q}}^{-\Omega^{m_q}} \sum_{i=1}^{\ell} \left| I'_{v_i} \right| (t)\, dt \leq \ell \, .
\end{align}
Since $I_{v_i}$ is monotone and $\frac{|t_0|}{\Omega^{m_q}} \in [1,1+1/d]$, \eqr{e:goodm2} and \eqr{e:getlocali} give
  \begin{align}		\label{e:arrange1}
   \e^{-6} \, \ell  &\leq  \sum_{i=1}^{\ell} I_{v_i} (t_0)   \, , \\
   -t_0 \,  \left( -4\pi \, t_0 \right)^{ - \frac{n}{2} } \,   \sum_{i=1}^{\ell} \int_{M_{t_0}} \left\{  2 \, |\nabla v_i|^2 + v_i^2 \, |\phi |^2 \right\}  \, \e^{  \frac{|x|^2}{4t_0}}   & \leq d\, \ell \left( 1 + \frac{1}{d} \right) \leq 2 \, d \, \ell \, .	\label{e:arrange2}
   \end{align}
  Applying the localization inequality Lemma \ref{l:localizeMt} to each $v_i$  gives
 \begin{align}	 
 	    \left( -4\pi \, t_0 \right)^{ - \frac{n}{2} } \,  \int_{M_{t_0}} \frac{|x|^2}{-t_0} \, v_i^2 \,  \e^{  \frac{|x|^2}{4t_0}} &\leq 4n \,   I_{v_i}(t_0)-t_0\,   \left( -4\pi \,t_0 \right)^{ - \frac{n}{2} } \,   \int_{M_{t_0}}  \left( 16 \, |\nabla v_i|^2 + 4\, v_i^2 \, |\phi|^2 \right)  \e^{  \frac{|x|^2}{4t_0}} \notag \, .
 \end{align}
 Summing this over $i$ and   then using \eqr{e:arrange2}, we conclude that
  \begin{align}	 	\label{e:localix}
 	    \left( -4\pi \, t_0\right)^{ - \frac{n}{2} } \,  \int_{M_{t_0}} \frac{|x|^2}{-t_0} \,   \sum_{i=1}^{\ell} v_i^2 \,  \e^{  \frac{|x|^2}{4t_0}}
	    &\leq (4n   +  16\,d )\, \ell  \leq (4n   +  16 ) \, d\, \ell  \, .
 \end{align}
Now, define the function $K(x,t)=    \sum_{i=1}^{\ell} v_i^2 (x,t)$ to be the ``trace of the Bergman kernel''.   Equation \eqr {e:arrange1} gives 
\begin{align}	\label{e:Kfrombelow}
	  \e^{-6} \, \ell \leq  \left( -4\pi \,t_0 \right)^{ - \frac{n}{2} } \,  \int_{ M_{t_0}  }  K \,  \e^{  \frac{|x|^2}{4t_0} } \, .
\end{align}
 To bound $\ell$, we will combine \eqr{e:Kfrombelow} with an upper bound on the integral of $K$.  We will divide the integral into an inner ball of radius proportional to $\sqrt{-d\,t_0}$ and an integral outside.
 
Set $\Lambda = \e^6 \, (8n+32)$.  It follows from \eqr{e:localix} that
\begin{align}	\label{e:localix2}
	 \left( -4\pi \,t_0 \right)^{ - \frac{n}{2} } \,  \int_{ M_{t_0} \setminus B_{\sqrt{- \Lambda \, d \, t_0}}}  K \, \e^{  \frac{|x|^2}{4t_0}} &\leq   \frac{(4n   +  16 ) \, d}{ \Lambda \, d \, } \, \ell   \leq \frac{\e^{-6}}{2} \, \ell \, .
\end{align}
Suppose, on the other hand, that $x_0 \in B_{\sqrt{ -\Lambda \, d\, t_0}}$.  Since $K(x_0,t_0)$ is the trace of a quadratic form, there exist coefficients $a_1 , \dots , a_{\ell}$  so that
$\sum a_i^2 = 1$ and 
  $u(x,t)= \sum_{i=1}^{\ell} a_i \, v_i (x,t)$ satisfies $K(x_0,t_0) = u^2(x_0,t_0)$.  Moreover,   monotonicity of $I_u$  and \eqr{e:goodm2} give
\begin{align}	\label{e:theboundonIu}
	\sup_{t \geq (1+1/d)t_0} \, I_u (t) \leq I_u ((1+1/d)t_0) \leq I_u (-\Omega^{m_q+1}) = 1 \, .
\end{align}
Set $\rho = \frac{\sqrt{-t_0}}{\sqrt{d}}$ and  observe that there is a constant $c_n$,  depending just on $n$, so that
\begin{align}	\label{e:sillycn}
	\left(  -4\pi \,t_0\right)^{ - \frac{n}{2} } \,  \e^{  \frac{|x_0|^2}{4\,t_0} } \leq c_n \, \sup_{  B_{\rho}(x_0) \times [ t_0 - \rho^2 , t_0] } \, \left\{ \left( - 4\pi \, t \right)^{ - \frac{n}{2} } \,  \e^{ \frac{|x|^2}{4t}}  \right\} \, .
\end{align}
Lemma \ref{l:mvi}
gives $c$ depending on $n$ so that 
\begin{align}
	u^2 (x_0 , t_0) \leq \frac{c}{\rho^{n+2}} \, \int_{t_0 -\rho^2}^{t_0} \int_{B_{\rho}(x_0) \cap M_t} u^2
		  \, .
\end{align}
Combining this with \eqr{e:sillycn} and the bound \eqr{e:theboundonIu} on $I_u$ gives
\begin{align}
	\left(  -4\pi \, t_0\right)^{ - \frac{n}{2} } \,  \e^{  \frac{|x_0|^2}{4t_0} } \, u^2 (x_0 , t_0) &\leq \frac{c\, c_n}{\rho^{n+2}} \, \int_{t_0 -\rho^2}^{t_0}  \left( - 4\pi \, t \right)^{ - \frac{n}{2} } \int_{B_{\rho}(x_0) \cap M_t} u^2\, \e^{ \frac{|x|^2}{4t}}  \leq \frac{c\, c_n}{\rho^{n+2}} \, \int_{t_0 -\rho^2}^{t_0}I_u(t)  \notag \\
	&\leq  \frac{c\, c_n}{\rho^{n}} \,  I_u(t_0 -\rho^2) \leq  \frac{c\, c_n}{\rho^{n}} = c\, c_n \, \left( \frac{d}{-t_0} \right)^{ \frac{n}{2}}
		  \, .
\end{align}
Integrating this bound over $x_0 \in B_{\sqrt{ -\Lambda \, d \, t_0}} \cap M_{t_0}$ gives
\begin{align}	\label{e:Kinside}
	\left( -4\pi\,t_0 \right)^{ - \frac{n}{2} } \, \int_{ B_{\sqrt{ -\Lambda \, d \, t_0} \cap M_{t_0}} } K \, \e^{  \frac{|x|^2}{4t_0}} \leq c\, c_n \, \left( \frac{d}{-t_0} \right)^{ \frac{n}{2}}
	 \, \Vol \, \left( B_{\sqrt{ -\Lambda \, d \, t_0} \cap M_{t_0}}  \right) \leq C_n \, \lambda_0 \, d^n \, .
\end{align}
Using the lower bound from \eqr{e:Kfrombelow} and 
combining \eqr{e:localix2} with \eqr{e:Kinside}, we see that
\begin{align}	 
	 \e^{-6} \, \ell \leq \left( -4\pi \,t_0 \right)^{ - \frac{n}{2} } \,  \int_{ M_{t_0}  }  K \,  \e^{  \frac{|x|^2}{4t_0}}&\leq    \frac{\e^{-6}}{2} \, \ell + C_n \, \lambda_0 \, d^n\, .
\end{align}
We can absorb the first term on the right and the theorem follows.
 \end{proof}

   \section{Entropy controls  spectral multiplicity and heat kernel}

We will next bound the counting function on a shrinker and then estimate the heat kernel.

   \begin{proof}[Proof of Theorem \ref{t:countingN}]
   The shrinker $\Sigma$ gives rise to a MCF $M_t$ where each $M_t$ is given as a set by $\sqrt{-t} \, \Sigma$.  Fix some $\mu \geq 1$.  For each $L^2$-eigenvalue $\mu_i\leq \mu$ of $\cL$ on $\Sigma$, let $u_i$ be an eigenfunction with $\| u_i \|_{L^2(\Sigma)} = 1$.  
   Corollary \ref{c:polyGu}
then gives 
  $w_i \in \cP_{2\mu_i} (M_t)$ defined by  $w_i(y,t) = (-t)^{\mu_i} \, u_i \left(\frac{y}{\sqrt{-t}} \right)$.  Combining this with Theorem \ref{t:maria} gives $C=C(n)$ so that
  \begin{align}
  	\cN ( \mu) \leq \dim \cP_{2\mu}(M_t) \leq C \, \lambda (\Sigma) \, \mu^{n} \, .
  \end{align}
   \end{proof}

 From Theorem \ref{t:countingN}, and the   proof of the Courant nodal domain theorem, \cite{CtHi}, we get:
 
 \begin{Cor}
 If $\Sigma^n\subset \RR^N$ is a shrinker, then any hyperplane through the origin cannot divide $\Sigma$ into more than $C_n\,\lambda(\Sigma)$ many components.  
 \end{Cor}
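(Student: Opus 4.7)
The plan is to emulate the classical Courant nodal domain argument, with the eigenfunction in question being a linear coordinate function. A hyperplane through the origin is the zero set $\{\ell = 0\}$ of some linear function $\ell(x) = \langle x, v \rangle$ with $|v|=1$. As noted in Section 1, such a coordinate function is a $\tfrac{1}{2}$-eigenfunction for $\cL$ on any shrinker, and Lemma \ref{l:std} guarantees $\ell \in W^{1,2}$ with $\int |\nabla \ell|^2 \e^{-f} = \tfrac{1}{2} \int \ell^2 \e^{-f} < \infty$ since $\lambda(\Sigma) < \infty$.

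Let $\Omega_1, \ldots, \Omega_m$ denote the connected components of $\Sigma \setminus \{\ell = 0\}$, which are precisely the nodal domains of $\ell$. Set $\phi_j = \ell \cdot \mathbf{1}_{\Omega_j}$. The plan is to show the following three facts: (i) each $\phi_j$ belongs to $W^{1,2}$ with respect to the Gaussian weight, (ii) the $\phi_j$ are pairwise $L^2$-orthogonal by disjoint support, and (iii) each $\phi_j$ satisfies $\int |\nabla \phi_j|^2 \e^{-f} = \tfrac{1}{2} \int \phi_j^2 \e^{-f}$. Item (iii) is proven by integration by parts on $\Omega_j$: since $\cL \ell = -\tfrac{1}{2}\ell$ and $\ell$ vanishes on $\partial \Omega_j$, a cutoff argument analogous to the one in the proof of Lemma \ref{l:std} (absorbing boundary cutoff errors using the Gaussian decay) yields $\int_{\Omega_j} |\nabla \ell|^2 \e^{-f} = -\int_{\Omega_j} \ell\, \cL \ell\, \e^{-f} = \tfrac{1}{2} \int_{\Omega_j} \ell^2 \e^{-f}$.

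Combining (ii) and (iii), any linear combination $\psi = \sum c_j \phi_j$ satisfies $\int |\nabla \psi|^2 \e^{-f} = \tfrac{1}{2} \int \psi^2 \e^{-f}$, so the $m$-dimensional subspace $V = \text{span}(\phi_1, \ldots, \phi_m)$ has Rayleigh quotient identically equal to $\tfrac{1}{2}$. The Courant-Fischer min-max principle applied to the self-adjoint operator $\cL$ then gives $\mu_m \leq \tfrac{1}{2}$, so $\cN(1) \geq \cN(\tfrac{1}{2}) \geq m$. Theorem \ref{t:countingN} applied at $\mu = 1$ yields $m \leq \cN(1) \leq C_n \, \lambda(\Sigma)$, as desired.

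The only technical obstacle I anticipate is justifying the integration by parts in step (iii) when $\Sigma$ is non-compact and the nodal set $\{\ell = 0\}$ may have irregularities. Both issues are handled by the standard cutoff-and-absorb technique already used in the proof of Lemma \ref{l:std}: one multiplies by a spatial cutoff $\eta$ of the type considered there, uses the vanishing of $\ell$ on the nodal set to kill the boundary contribution (or approximate $\Omega_j$ by open sets slightly shrunken from $\partial \Omega_j$), and then sends $\eta \to 1$ using the integrability provided by finite entropy. No new ingredient beyond what is already developed in the paper is needed.
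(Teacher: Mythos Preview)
Your proposal is correct and follows essentially the same approach as the paper: the paper's proof simply observes that $x_1$ is a $\tfrac{1}{2}$-eigenfunction, invokes the Courant nodal domain argument (citing Cheng's paper rather than spelling it out), and bounds $\cN(\tfrac{1}{2})$ via Theorem~\ref{t:countingN}. Your version makes the Courant argument explicit and is slightly more careful in applying Theorem~\ref{t:countingN} at $\mu=1$ (since the theorem is stated for $\mu\geq 1$) before using monotonicity of $\cN$.
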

 
 \begin{proof}
 After a rotation, we may assume that the hyperplane is $\{x_1=0\}$.  Since $\cL\,x_1=-\frac{1}{2}\,x_1$ and $\cN\left(\frac{1}{2}\right)\leq C_n \,\lambda (\Sigma)$ by Theorem \ref{t:countingN}, the claim follows from the argument in the Courant nodal domain theorem for the operator $\cL$; see page 45 of \cite{Cg} for a proof for $\Delta$.  
 \end{proof}
 
 In the case where $\Sigma^n\subset B_{\sqrt{2\,n}}\subset \RR^N$ is a closed minimal submanifold and the entropy reduces to the volume, this result was established by Cheng-Li-Yau in corollary 6 of \cite{CgLYa}.

 \subsection{Drift heat kernel on shrinkers}
 
 In \cite{CgLYa}, Cheng, Li and Yau proved heat kernel estimates on closed spherical minimal submanifolds.   Although there are many of these submanifolds, they form a relatively small subset of all closed shrinkers.   In addition, there are many  non-compact shrinkers.  On a closed manifold, the heat kernel  is given by
 $H(x,y,t) = \sum_i \e^{-\mu_i \, t} \, u_i (x) \, u_i(y)$ where the $u_i$'s are eigenfunctions with eigenvalues $\mu_i$.  In general, the heat kernel on a non-compact manifold cannot be constructed this way unless   the eigenvalues go to infinity at a rate.   The heat kernel  has four properties: 
 $H_t = \cL H$,  $H(x,y,t)=H(y,x,t)$, the reproducing property as $t\to 0$, and the semi-group property.
 
We next   estimate  the drift heat kernel $H $ on a shrinker  in arbitrary codimension.  To construct $H $, we  need that the spectrum is discrete; this was proven by Cheng-Zhou, \cite{CxZh}.
 
 \begin{Thm}	\label{t:heatkernel}
 Let $\Sigma^n \subset \RR^N$ be a shrinker with finite entropy.
There is a complete basis of $W^{1,2}$ eigenfunctions $u_i$ for $\cL$ with eigenvalues $\mu_i$  and $\| u_i \|_{L^2} = 1$.  
The heat kernel $H(x,y,t)$ for $\partial_t - \cL$ exists and is given by
\begin{align}	\label{e:hxyt}
	H(x,y,t)=\sum_{i}\e^{-\mu_i\,t}\, u_i(x)\,u_i(y)\,  .
\end{align}
\end{Thm}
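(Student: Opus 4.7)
\textbf{Proof plan for Theorem \ref{t:heatkernel}.} The approach is in three stages: build an orthonormal eigenbasis from Cheng-Zhou's discreteness theorem, define $H$ by the spectral formula and upgrade convergence from $L^2_f$ to pointwise smooth, and finally verify the four characterizing properties termwise.

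First, realize $\cL$ on $L^2_f(\Sigma)$ as the self-adjoint Friedrichs extension of the quadratic form $\int |\nabla u|^2\, \e^{-f}$; formal symmetry is provided by Lemma \ref{l:std}. By Cheng-Zhou \cite{CxZh} the spectrum is discrete, so we obtain an $L^2_f$-orthonormal basis $\{u_i\}$ with $\cL u_i = -\mu_i u_i$ and $0 \leq \mu_0 \leq \mu_1 \leq \dots \to \infty$; Lemma \ref{l:std} places each $u_i$ in $W^{1,2}$, and local elliptic regularity for the drift operator $\cL$ makes each $u_i$ smooth. Theorem \ref{t:countingN} converts to a Weyl-type bound $\mu_i \geq c_n \lambda(\Sigma)^{-1/n}\, i^{1/n}$ for $i$ large, so $\sum_i \e^{-\mu_i t}$ converges for every $t > 0$.

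Next, define the spectral semigroup $T_t f = \sum_i \e^{-\mu_i t}\,\langle f, u_i\rangle\, u_i$ by the spectral theorem. The convergence $\sum_i \e^{-2\mu_i t}<\infty$ makes $T_t$ Hilbert-Schmidt on $L^2_f$ for every $t>0$, so $T_t$ has an integral kernel $K_t(x,y) \in L^2_f(\Sigma \times \Sigma)$; expanding $K_t$ in the product orthonormal basis $\{u_i(x) u_j(y)\}$ identifies $K_t$ with the series \eqr{e:hxyt} in the $L^2_f$ sense. To pass to pointwise smooth convergence I combine two ingredients: Theorem \ref{t:polyGu}, which gives the sharp pointwise bound $u_i^2(x) \leq C_n\,\lambda(\Sigma)(4+|x|^2)^{2\mu_i}$, and the semigroup identity $H(x,y,2t) = \int_\Sigma H(x,z,t)\,H(z,y,t)\,\e^{-f(z)}\,dz$. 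The latter, applied with the $L^2_f$-kernel from the Hilbert-Schmidt step and followed by local parabolic regularity for $\partial_t - \cL$ on compact subdomains of $\Sigma$, produces a smooth classical representative $H(x,y,t)$. Bootstrapping with Theorem \ref{t:polyGu} and the Weyl bound then shows that the series \eqr{e:hxyt} converges locally uniformly together with all its derivatives in $x$, $y$, $t$ on $\Sigma\times\Sigma\times(0,\infty)$.

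Finally, each of the four defining properties is immediate from the series. Symmetry $H(x,y,t) = H(y,x,t)$ is manifest; termwise differentiation (justified by the locally uniform convergence above) gives $\partial_t H = \cL_x H = \cL_y H$; the semigroup identity $\int_\Sigma H(x,z,s)\,H(z,y,t)\,\e^{-f(z)}\,dz = H(x,y,s+t)$ reduces to orthonormality of the $u_i$; and the reproducing identity $T_t \varphi \to \varphi$ as $t\downarrow 0$ holds in $L^2_f$ by completeness of $\{u_i\}$ and in the classical sense for smooth compactly supported $\varphi$ by the parabolic smoothing above.

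The main obstacle is bridging $L^2_f$-convergence and pointwise-smooth convergence of the spectral sum at small $t>0$: the eigenfunction growth factor $(4+|x|^2)^{2\mu_i}$ from Theorem \ref{t:polyGu} competes with the decay $\e^{-\mu_i t}$, and cannot be absorbed directly for $t$ smaller than $2\log(4+|x|^2)$. The resolution is to do the construction in $L^2_f$ first (using only the counting bound) and then apply the semigroup property together with local parabolic regularity on $\Sigma$ to recover pointwise smoothness, which is where Theorem \ref{t:polyGu} and Theorem \ref{t:countingN} both enter crucially.
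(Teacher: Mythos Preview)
Your approach differs genuinely from the paper's. The paper constructs the eigenfunctions by hand via Dirichlet exhaustion on $B_j\cap\Sigma$, uses Corollary~\ref{c:lemmamu} to prevent mass escaping to infinity in the limit, and proves completeness by a direct contradiction argument using the approximation scheme and the variational characterization of eigenvalues. Your route through Cheng--Zhou plus the Friedrichs extension and the abstract spectral theorem is a legitimate and cleaner alternative for the first stage, trading a self-contained construction for a black-boxed one.

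The substantive issue is in your treatment of pointwise convergence of the series \eqr{e:hxyt}. You invoke Theorem~\ref{t:polyGu}, which gives $|u_i(x)|\le C\,(4+|x|^2)^{\mu_i}$, and you correctly observe that on $B_R$ this bound is exponential in $\mu_i$ and cannot be summed against $\e^{-\mu_i t}$ when $t<2\log(4+R^2)$. Your proposed workaround---build the $L^2_f$ kernel via Hilbert--Schmidt, then upgrade to a smooth $H$ by parabolic regularity---does produce a smooth heat kernel, but your final sentence (``Bootstrapping with Theorem~\ref{t:polyGu} and the Weyl bound then shows that the series \eqr{e:hxyt} converges locally uniformly\ldots'') does not follow: Theorem~\ref{t:polyGu} still gives only the exponential-in-$\mu_i$ local bound, so the tail of the series is not controlled for small $t$, and you have not explained how the smooth $H$ already in hand forces pointwise convergence of the spectral sum.

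The paper avoids this entirely by never using Theorem~\ref{t:polyGu} here. Instead it applies \emph{local} interior elliptic estimates to the equation $\cL u_i=-\mu_i u_i$ together with the trivial bound $\int_{B_R}u_i^2\le \e^{R^2/4}$, obtaining $\sup_{B_R}|u_i|^2\le c(R)\,\mu_i^{n/2}$---polynomial in $\mu_i$, not exponential. With this bound and the counting estimate $\cN(m)\le C\lambda\, m^n$, the series $\sum_i \e^{-\mu_i t}|u_i(x)||u_i(y)|$ converges locally uniformly for \emph{every} $t>0$, and the same reasoning handles derivatives. You already mention local elliptic regularity in your first paragraph; using it here in place of Theorem~\ref{t:polyGu} closes the gap immediately and makes the Hilbert--Schmidt detour unnecessary.
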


\begin{proof}
Let $\mu_i^j$ be the Dirichlet eigenfunctions for $\cL$ on $B_j \cap \Sigma$ and let $u_i^j$ be the corresponding eigenfunctions with $\| u_i^j \|_{L^2} = 1$.
By domain monotonicity of eigenvalues,  $\mu_i^j$ is non-increasing in $j$ and we get limits $\mu_i = \lim_{j\to \infty} \mu^j_i$.
  For each $i$, elliptic theory gives uniform estimates for the $u_i^j$ on compact subsets and, thus,
Arzela-Ascoli gives   limiting functions $u_i$ with $\cL \, u_i = - \mu_i \, u_i$ with  $\| u_i \|_{L^2} \leq 1$.   Corollary \ref{c:lemmamu} gives that
$\| u_i \|_{L^2} = 1$ and $\| \nabla u_i \|_{L^2} \ne 0$ for $i>0$, as desired. The $\mu_i$ must go to infinity by Theorem \ref{t:countingN}.

We will show   that the $u_i$'s are complete.  If this was not the case, then there would be some with $\| w \|_{L^2} = 1$, $\| \nabla w \|_{L^2} < \infty$, and 
\begin{align}	\label{e:www}
	\int_{\Sigma} u_i \, w \, \e^{-f} = 0 {\text{ for every }} i .
\end{align}
Since  $\mu_i \to \infty$, we can fix  $k$ so that $\mu_k > 2 \, \| \nabla w \|_{L^2}^2$.
The first claim in Lemma \ref{l:lemmamu} gives  
\begin{align}	\label{e:localizew}
	  \int |x|^2 \, w^2 \, \e^{-f}  \leq 4\,n  + 16 \, \int |\nabla w|^2 \, \e^{-f} \, .
\end{align}
Let $\phi_j$ be a  cutoff function that is one on $B_{j-1}$ and zero $\partial B_j$ and set $w_j = \phi_j \, w$. 
It follows from \eqr{e:www}, \eqr{e:localizew} and the uniform convergence on compact sets of 
$u_i^j$'s to $u_i$ that
\begin{align}	\label{e:www2}
	\lim_{j\to \infty}  \, \|   w_j \|_{L^2}  &= 1 \, , \\
	\limsup_{j\to \infty}  \, \| \nabla w_j \|^2_{L^2}  &\leq\| \nabla w \|_{L^2}^2 \, , \\
	\lim_{j\to \infty} \, \int_{\Sigma} u_i^j w_j \, \e^{-f}  &= 0 {\text{ for   }} i \leq k.
\end{align}
In particular, we can choose some $j$ large so that the orthogonal projection $\bar{w}_j$ of $w_j$ onto the eigenspaces with $\mu_i^j$ with $i>k$ has 
\begin{align}
	\frac{3}{4} &<  \| \bar{w}_j \|_{L^2}^2  {\text{ and }}
	 \| \nabla \bar{w}_j  \|_{L^2}^2    \leq \frac{5}{4} \| \nabla w \|_{L^2}^2  \, . \label{e:cont2}  
\end{align}
However, since $\mu_k^j > \mu_k > 2 \, \| \nabla w \|_{L^2}^2$, the variational characterization of eigenvalues gives 
\begin{align}
	2 \, \| \nabla w \|_{L^2}^2 \,  \| \bar{w}_j \|_{L^2}^2 \leq  \| \nabla \bar{w}_j \|_{L^2}^2 \, .
\end{align}
This contradicts  \eqr{e:cont2}, so we conclude that the $u_i$'s are complete.

To see that the sum \eqr{e:hxyt} converges for each $t> 0$, observe that elliptic theory and the bounds $\int_{B_R} u_i^2 \leq \e^{ \frac{R^2}{4} }$ and 
$\int_{B_R} |\nabla u_i|^2 \leq \mu_i \, \e^{ \frac{R^2}{4} }$ give    $c=c(R)$ so that
\begin{align}
	\mu_i \, \sup_{B_R} |u_i|^2 +
	\sup_{B_R} |\nabla u_i|^2 &\leq c \, \mu_i^{ \frac{n}{2} +1} \, .
\end{align}
  Theorem \ref{t:countingN} gives $C=C(n)$ so that $\cN (m) \leq C \, \lambda \, m^n$, so we have
\begin{align}
	\sup_{B_R \times B_R} |H| (x,y,t) &\leq \sum_{m} \left\{ \sum_{\mu_i \in [m-1,m]} c \, \mu_i^{ \frac{n}{2} }\e^{-\mu_i \, t} \right\} \leq 
		 c\, \sum_{m}  \cN(m)  \,  m^{ \frac{n}{2} }\e^{ -(m-1) \, t}   \notag \\
		 &\leq c \, C \, \lambda \, \e^t \, \sum_{m}   m^{ \frac{3n}{2} }\e^{-mt}   \, .
\end{align}
This is finite for each $t>0$.  Arguing similarly gives  estimates also for higher derivatives, so Arzela-Ascoli gives convergence of \eqr{e:hxyt} on compact subsets.  It then follows  that $H$ has the semi-group property and satisfies the drift heat equation.  Finally, the reproducing property at $t=0$ follows from the the completeness of the eigenvalues.
\end{proof}

  \section{Rigidity}

  In this section, we show that a shrinker, even in high codimension, that is close to a cylinder on a sufficiently large bounded set must  be a hypersurface in some Euclidean subspace.  
  It then follows from  \cite{CIM}    that it must  be a cylinder; cf. \cite{CM7}, \cite{GKS}.

  \subsection{Convergence of the spectrum}
  
  By Theorem \ref{t:heatkernel}, the operator $\cL$ on a shrinker has eigenvalues $0=\mu_0 < \mu_1 \leq \dots$ going to infinity and a complete basis of $L^2$ eigenfunctions.
Given $r> \sqrt{2n}$, let $\beta^r_i$ be the Dirichlet eigenvalues of $\cL$ on $B_r \cap \Sigma$.
We show next that the Dirichlet spectrum converges  uniformly.

\begin{Lem}	\label{p:specD}
Given $k$, $\delta > 0$ and $n$, there exists $\bar{r} = \bar{r}(\mu_k , n , \delta)$ so that for $\bar{r} \leq r$
\begin{align}	\label{e:thedist}
	\mu_i \leq \beta_i^r \leq \mu_i + \delta {\text{ for every }} i \leq k \, .
\end{align}
\end{Lem}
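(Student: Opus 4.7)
The plan is to prove the two inequalities separately. For the lower bound $\mu_i \leq \beta_i^r$, I would invoke standard domain monotonicity for the drift Laplacian: any function in $W^{1,2}_0(B_r \cap \Sigma)$ extends by zero to a function in $W^{1,2}(\Sigma)$ with identical weighted $L^2$ and Dirichlet norms. Theorem \ref{t:heatkernel} gives a complete basis of $L^2$ eigenfunctions, so the min-max characterization
\begin{align}
\mu_i = \inf_{\substack{V \subset W^{1,2}(\Sigma) \\ \dim V = i+1}} \, \sup_{0 \neq u \in V} \, \frac{\int |\nabla u|^2 \, \e^{-f}}{\int u^2 \, \e^{-f}}
\end{align}
ranges over a strictly larger class of subspaces than the analogous characterization of $\beta_i^r$, yielding $\mu_i \leq \beta_i^r$ at once.

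For the upper bound, I would produce explicit trial subspaces for $\beta_i^r$ from the global eigenfunctions. Let $u_0, u_1, \ldots, u_k$ be $L^2$-orthonormal eigenfunctions of $\cL$ on $\Sigma$ for eigenvalues $0 = \mu_0 \leq \mu_1 \leq \cdots \leq \mu_k$, which by Lemma \ref{l:std} are simultaneously orthogonal in the weighted Dirichlet inner product. Fix a Lipschitz cutoff $\phi_r$ that equals one on $B_{r-1}$, vanishes outside $B_r$, and has $|\nabla \phi_r| \leq 1$, and set $\tilde u_j = \phi_r \, u_j \in W_0^{1,2}(B_r \cap \Sigma)$. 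These are the trial functions.

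The heart of the argument is a quantitative perturbation estimate for the matrices
\begin{align}
G_{ij} = \int \tilde u_i \, \tilde u_j \, \e^{-f}, \qquad S_{ij} = \int \langle \nabla \tilde u_i, \nabla \tilde u_j \rangle \, \e^{-f}.
\end{align}
Expanding $\tilde u_j = \phi_r u_j$ and using Cauchy-Schwarz, all error terms between $G$ and the identity, and between $S$ and $\diag(\mu_0, \ldots, \mu_k)$, are bounded by tail integrals $\int_{\Sigma \setminus B_{r-1}} (u_j^2 + |\nabla u_j|^2) \, \e^{-f}$ and cross terms involving $|\nabla \phi_r| \leq 1$ supported in $B_r \setminus B_{r-1}$. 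Applying Corollary \ref{c:lemmamu} to each $u_j$ with $\mu \leq \mu_k$ bounds every such tail by a quantity $\epsilon(r) \leq C(n,\mu_k)/(r-2)^2$, so both $G$ and $S$ converge to their limits at a rate depending only on $n$ and $\mu_k$.

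To finish, I would apply the min-max characterization at each level $i \leq k$: with $V_i = \mathrm{span}(\tilde u_0, \ldots, \tilde u_i)$,
\begin{align}
\beta_i^r \leq \sup_{0 \neq a \in \RR^{i+1}} \frac{a^T S^{(i)} a}{a^T G^{(i)} a},
\end{align}
where $S^{(i)}, G^{(i)}$ are the leading $(i+1) \times (i+1)$ principal submatrices. Once $r$ is large enough that $G^{(i)}$ is invertible and $G^{(i)} \approx I$, $S^{(i)} \approx \diag(\mu_0, \ldots, \mu_i)$, the generalized Rayleigh quotient is at most $(\mu_i + C\epsilon)/(1 - C\epsilon)$, giving $\beta_i^r \leq \mu_i + \delta$ simultaneously for all $i \leq k$ as soon as $r \geq \bar r(\mu_k, n, \delta)$. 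The only real obstacle is organizing the cut-off error so that it depends on just $\mu_k$ and $n$, which is precisely the quantitative content Corollary \ref{c:lemmamu} supplies.
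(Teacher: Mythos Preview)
Your proposal is correct and follows essentially the same approach as the paper: domain monotonicity for the lower bound, and for the upper bound, cutting off the global eigenfunctions $u_i$ with a Lipschitz function, using Corollary \ref{c:lemmamu} to control the $W^{1,2}$ tails uniformly in terms of $n$ and $\mu_k$, and then concluding via min-max with the truncated functions as trial subspace. The paper is more terse (it just notes that $\|u_i - v_i\|_{W^{1,2}}^2 \leq 5c/(r-1)^2$ and says the eigenvalue bound follows), whereas you spell out the Gram/stiffness matrix perturbation explicitly, but the content is the same.
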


\begin{proof} 
Domain monotoniticity of eigenvalues gives for $r_1 < r_2$ and every $i$
\begin{align}	\label{e:domainMon}
	\mu_i \leq \beta_i^{r_2} \leq \beta_i^{r_1} \, .
\end{align}
This gives the first inequality in \eqr{e:thedist}.  Let  $u_1 , \dots , u_{k}$ satisfy $\cL \, u_i = - \mu_i \, u_i$ and $\| u_i \|_{L^2} = 1$.
 Corollary \ref{c:lemmamu} gives $c=c  (n,\mu_k ) > 0$ so that
\begin{align}
	\int_{\Sigma \setminus B_{r}} \left( u_i^2 + |\nabla u_i|^2 \right) \, \e^{-f} \leq \frac{c}{(r-1)^2} {\text{ for }} i = 1 , \dots , k \, .
\end{align}
Let $\psi$ be a linear cutoff function that is one on $B_r$ and zero outside of $B_{r+1}$ and 
set	$v_i = \psi \, u_i $.  The $v_i$'s are supported in $B_{r+1}$ and
we get for each $i$ that
\begin{align}	\label{e:uiandvi}
	\| u_i - v_i \|_{W^{1,2}}^2 &\leq  \int \left( (1-\psi)^2\, u_i^2 + 2\,(1-\psi)^2 |\nabla u_i|^2 + 2\, u_i^2\, |\nabla \psi|^2 \right) \, \e^{-f} \leq \frac{5c}{(r-1)^2} \, .
\end{align}
Since \eqr{e:uiandvi} implies that the $u_i$ and $v_i$ are close both in $L^2$ and for the energy, we get 
the last inequality in \eqr{e:thedist}
 for each $i$ for $r$ sufficiently large depending on $\mu_k$, $n$ and $\delta$.
\end{proof}

  \subsection{Stability of eigenvalues}
  
    In this subsection, $\Gamma^n \subset \RR^N$ is a smooth complete shrinker with finite entropy; let $\mu^{\Gamma}_i$ denote its eigenvalues.

    \begin{Def}
We will say that $\Sigma$ and $\Gamma$ are $(\epsilon , R,   C^1)$-close  if $B_R \cap \Sigma$ can be written as a normal graph of a vector field $U$ over (a subset of) $\Gamma$ and $\| U \|_{C^1} \leq \epsilon$ and, likewise, $B_R \cap \Gamma$ is a graph over $\Sigma$.
\end{Def}

\vskip1mm
The definition of $(\epsilon , R ,C^1)$-close   gives $C^1$ control in a compact set, allowing  wild differences outside of this set.  The 
next proposition shows that this is enough to get spectral stability.

    \begin{Pro}	\label{p:stable}
    Given  $k$ and $\delta > 0$, there exist $\epsilon$ and $R$ depending on $\delta , k , \Gamma$ so that if 
    a shrinker $\Sigma^n$ is $(\epsilon , R ,C^1)$-close to $\Gamma$ and $\lambda (\Sigma) < \infty$, then 
for $i \leq k$
\begin{align}
	\left| \mu^{\Gamma}_i - \mu^{\Sigma}_i \right| \leq \delta  \, .
\end{align}
    \end{Pro}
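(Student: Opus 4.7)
The plan is to prove both $\mu^{\Gamma}_i \le \mu^{\Sigma}_i + \delta$ and $\mu^{\Sigma}_i \le \mu^{\Gamma}_i + \delta$ for $i\le k$ by transferring Dirichlet eigenfunctions between $\Sigma$ and $\Gamma$ through the normal graph diffeomorphism on $B_R$, and combining this with Lemma~\ref{p:specD} to pass between the Dirichlet spectra and the full spectra. Throughout I write $\beta_i^{r,\Sigma}$ and $\beta_i^{r,\Gamma}$ for the Dirichlet eigenvalues of $\cL$ on $B_r\cap\Sigma$ and $B_r\cap\Gamma$.

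First I would fix $R_0=R_0(\Gamma,k,\delta)$ via Lemma~\ref{p:specD} so that
\begin{equation*}
\mu^{\Gamma}_i \le \beta_i^{R_0,\Gamma} \le \mu^{\Gamma}_i + \delta/4 \quad \text{for } i\le k,
\end{equation*}
and let $v_0,\dots,v_k$ be the corresponding $L^2(\e^{-f})$-orthonormal Dirichlet eigenfunctions on $B_{R_0}\cap\Gamma$. I would set $R=R_0+2$ (to be enlarged below) and take $\epsilon\le \epsilon(\Gamma,R_0,\delta)$ small. By $(\epsilon,R,C^1)$-closeness there is a normal graph diffeomorphism $\Phi\colon B_{R_0}\cap\Gamma\to \Phi(B_{R_0}\cap\Gamma)\subset B_{R_0+1}\cap\Sigma$; pushing the $v_i$ forward by $\Phi$ and extending by zero gives compactly supported test functions $\tilde v_i$ on $\Sigma$.

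The key technical step is to compare Rayleigh quotients under $\Phi$. Because $\Phi$ displaces points by at most $\epsilon$ and its differential is $\epsilon$-close to the inclusion, the induced metric on the graph, the weight $\e^{-f}$, and the gradients satisfy pointwise $C\epsilon$ estimates with $C=C(\Gamma,R_0)$. Hence for any $u$ in the $(k+1)$-dimensional span of $v_0,\dots,v_k$,
\begin{equation*}
\Bigl|\int_\Sigma |\nabla \tilde u|^2\,\e^{-f} - \int_\Gamma |\nabla u|^2\,\e^{-f}\Bigr| + \Bigl|\int_\Sigma \tilde u^2\,\e^{-f} - \int_\Gamma u^2\,\e^{-f}\Bigr| \le C\epsilon\,\| u\|_{W^{1,2}(\Gamma)}^2.
\end{equation*}
Taking $\epsilon$ small enough, every Rayleigh quotient on the pushed-forward span is at most $\beta_k^{R_0,\Gamma}+\delta/4 \le \mu^{\Gamma}_k+\delta/2$, and the min-max characterization of Dirichlet eigenvalues combined with domain monotonicity yields
\begin{equation*}
\mu^{\Sigma}_i \le \beta_i^{R_0+1,\Sigma} \le \mu^{\Gamma}_i+\delta/2 \quad \text{for } i\le k.
\end{equation*}

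For the reverse inequality I would bootstrap from this bound: since $\mu^{\Sigma}_k\le \mu^{\Gamma}_k+1$, and the threshold $\bar r$ in Lemma~\ref{p:specD} depends only on $\mu_k$, $n$, and $\delta$, there exists $R_1=R_1(\Gamma,k,\delta,n)$ so that $\beta_i^{R_1,\Sigma}\le \mu^{\Sigma}_i+\delta/4$. Enlarging $R$ to $R_1+2$ and shrinking $\epsilon$ accordingly at the outset (still with dependence only on $\Gamma,k,\delta$), $(\epsilon,R,C^1)$-closeness equally provides a graph diffeomorphism $B_{R_1}\cap\Sigma\to B_{R_1+1}\cap\Gamma$ in the other direction. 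Pushing the Dirichlet eigenfunctions on $B_{R_1}\cap\Sigma$ onto $\Gamma$ and repeating the Rayleigh quotient comparison gives $\beta_i^{R_1+1,\Gamma}\le \beta_i^{R_1,\Sigma}+\delta/4$, whence $\mu^{\Gamma}_i\le \mu^{\Sigma}_i+\delta/2$, completing the argument.

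The main obstacle is the quantitative Rayleigh quotient comparison across $\Phi$: since the graph is controlled only in $C^1$, I cannot push the eigenfunction equation across $\Phi$ and must work purely with the Dirichlet form and $L^2$ norm. Fortunately the weight $\e^{-f}$ is smooth in ambient coordinates and the displacement is at most $\epsilon$, so the estimate reduces to a uniform first-order comparison of induced metrics and volume forms on a ball that is compact in $\Gamma$. A secondary point is that Lemma~\ref{p:specD} applied to $\Sigma$ needs an a priori upper bound on $\mu^{\Sigma}_k$; this is supplied precisely by the first half of the argument.
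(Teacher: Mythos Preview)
Your proposal is correct and follows essentially the same approach as the paper: use Lemma~\ref{p:specD} to approximate the full spectrum of $\Gamma$ by Dirichlet eigenvalues on a large ball, transfer across the $C^1$ graph diffeomorphism to compare Dirichlet eigenvalues on $\Sigma$ and $\Gamma$, use this to obtain an a priori bound on $\mu_k^{\Sigma}$, and then apply Lemma~\ref{p:specD} to $\Sigma$ for the reverse inequality. The paper's proof is terser---it asserts the two-sided Dirichlet eigenvalue comparison $|\beta_i^{\Gamma,r}-\beta_i^{\Sigma,r}|\le \delta/3$ in one line rather than transferring eigenfunctions explicitly in each direction---but your more detailed version with the Rayleigh quotient comparison is exactly what underlies that step.
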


  \begin{proof}
  Lemma \ref{p:specD} gives
  $\bar{r} = \bar{r}(\mu^{\Gamma}_k , n , \delta)$ so that for $\bar{r} \leq r$
\begin{align}	\label{e:thedist2}
	\mu_i^{\Gamma} \leq \beta_i^{\Gamma,r} \leq \mu_i^{\Gamma} + \frac{\delta}{3} {\text{ for every }} i \leq k \, .
\end{align}
Moreover, given a fixed $r\leq R$, then for $\epsilon > 0$  small enough we can identify  $B_r \cap \Gamma$ and $B_r \cap \Sigma$ and, moreover, this identification is almost an isometry on $L^2$ and almost preserves the energy.  It follows that we can arrange that
  \begin{align}	\label{e:thedist3}
	\left| \beta^{\Gamma,r}_i - \beta^{\Sigma,r}_i \right| \leq \frac{\delta}{3}  \, .
\end{align}
Combining \eqr{e:thedist2}, \eqr{e:thedist3} and the fact that $\mu^{\Sigma}_i \leq  \beta^{\Sigma,r}_i$, we get an upper bound on $\mu^{\Sigma}_k$.  Hence, we can 
apply Lemma \ref{p:specD}  to get $r$ large enough that
  \begin{align}	\label{e:thedist4}
	\mu_i^{\Sigma} \leq \beta_i^{\Sigma,r} \leq \mu_i^{\Sigma} + \frac{\delta}{3} {\text{ for every }} i \leq k \, .
\end{align}
Finally, combining \eqr{e:thedist2}, \eqr{e:thedist3} and \eqr{e:thedist4} gives the proposition.
  \end{proof}

An immediate corollary is the lower semi-continuity of the spectral multiplicity (here $d(\mu)$ will denote the multiplicity of an eigenvalue $\mu$):

\begin{Cor}	\label{p:codim2}
Given  $\mu$, 
there exist  $\epsilon$, $R>0$ depending on $\mu$ and $\Gamma$ so that if a shrinker $\Sigma^n$ with $\lambda (\Sigma)<\infty$ is $(\epsilon , R ,C^1)$-close to $\Gamma$, then 
$d_{\Sigma}(\mu) \leq d_{\Gamma} (\mu)$.
\end{Cor}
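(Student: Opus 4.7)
The plan is to combine the uniform eigenvalue convergence from Proposition \ref{p:stable} with the fact that the spectrum of $\cL$ on $\Gamma$ is discrete (Theorem \ref{t:heatkernel}) to extract a multiplicity bound. Let $N_\Xi(\nu)$ denote the number of eigenvalues of $\cL$ on a shrinker $\Xi$ that are $\leq \nu$, counted with multiplicity, so that $d_\Xi(\mu) = N_\Xi(\mu) - N_\Xi(\mu^-)$. The two quantities
\[
K := N_\Gamma(\mu+1), \qquad \delta_0 := \mathrm{dist}\bigl(\mu,\ \{\mu_i^\Gamma\} \setminus \{\mu\}\bigr)
\]
are finite and strictly positive, and depend only on $\mu$ and $\Gamma$. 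By definition of $K$ one has $\mu_K^\Gamma > \mu + 1$.

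I will apply Proposition \ref{p:stable} with $k := K$ and $\delta := \tfrac{1}{2}\min\{1, \delta_0\}$ to produce the $\epsilon$ and $R$. Assuming $\Sigma$ is $(\epsilon, R, C^1)$-close to $\Gamma$, the proposition yields $|\mu_i^\Sigma - \mu_i^\Gamma| \leq \delta$ for every $i \leq K$. Using only the crude bound $\delta \leq 1/2$ I first obtain
\[
\mu_K^\Sigma \geq \mu_K^\Gamma - \tfrac{1}{2} > \mu,
\]
which forces the a priori bound $N_\Sigma(\mu) \leq K$. In particular, every index $i$ for which $\mu_i^\Sigma = \mu$ lies in $\{0, 1, \ldots, K-1\}$ and is therefore within the range where Proposition \ref{p:stable} applies with the sharper estimate $|\mu_i^\Sigma - \mu_i^\Gamma| \leq \tfrac{1}{2}\delta_0$.

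To finish, let $d := d_\Sigma(\mu)$ and let $j, j+1, \ldots, j+d-1$ be the consecutive indices where $\mu_i^\Sigma = \mu$. For each such $i$,
\[
|\mu_i^\Gamma - \mu| = |\mu_i^\Gamma - \mu_i^\Sigma| \leq \tfrac{1}{2}\delta_0,
\]
and the definition of $\delta_0$ forces $\mu_i^\Gamma = \mu$. The $d$ indices $j, \ldots, j+d-1$ are therefore a subset of the $d_\Gamma(\mu)$-many indices at which the $\Gamma$-spectrum equals $\mu$, giving $d_\Sigma(\mu) \leq d_\Gamma(\mu)$.

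The one subtlety worth flagging is that Proposition \ref{p:stable} requires the index cutoff $k$ to be fixed before the closeness parameters are chosen, while $d_\Sigma(\mu)$ is not bounded a priori for an arbitrary shrinker $\Sigma$ of finite entropy. This is circumvented precisely by the two-stage use of the stability estimate above: the first application at tolerance $1/2$ converts Proposition \ref{p:stable} into the a priori cap $N_\Sigma(\mu) \leq K$, and the second, at the sharper tolerance $\delta_0/2$, then extracts the multiplicity comparison.
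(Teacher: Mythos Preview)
Your proof is correct and follows essentially the same strategy as the paper's: use discreteness of the $\Gamma$-spectrum to fix a gap around $\mu$, then apply Proposition~\ref{p:stable} with an index cutoff just beyond $\mu$ to pin down the eigenvalues of $\Sigma$ near $\mu$. The paper phrases the final step via counting functions, bounding $d_\Sigma(\mu)\le \cN_\Sigma(\mu+\delta)-\cN_\Sigma(\mu-\delta)\le \cN_\Gamma(\mu+2\delta)-\cN_\Gamma(\mu-2\delta)=d_\Gamma(\mu)$, while you do an equivalent direct index-matching; neither approach buys anything the other does not. One expositional remark: what you describe as a ``two-stage use'' of Proposition~\ref{p:stable} is really a single application with $\delta=\tfrac12\min\{1,\delta_0\}$, whose conclusion you then exploit twice (once via $\delta\le\tfrac12$ to cap $N_\Sigma(\mu)$, once via $\delta\le\tfrac12\delta_0$ to force $\mu_i^\Gamma=\mu$).
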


 \begin{proof} 
 Theorem \ref{t:countingN} gives that the $\mu^{\Gamma}_i$'s go to infinity, so we can $\delta > 0$ so that $\Gamma$ has no eigenvalues in 
 $[\mu - 2\delta , \mu) \cup (\mu , \mu + 2\delta]$.  It follows that
 \begin{align}	\label{e:dgammamu}
 	d_{\Gamma} (\mu) = \cN_{\Gamma} (\mu + 2\delta) - \cN_{\Gamma} (\mu - 2\delta) \, .
\end{align}
Using
 Proposition \ref{p:stable} with $k = \cN_{\Gamma}(\mu + 2\delta)+1$    gives $\epsilon > 0$ and $R$ so that
 \begin{align}
 	\left| \mu_i^{\Gamma} - \mu_i^{\Sigma} \right| < \delta {\text{ for }} i \leq k \, .
 \end{align}
This implies that $\cN_{\Sigma} (\mu + \delta) \leq \cN_{\Gamma} (\mu + 2\delta)$ and 
 $\cN_{\Gamma} (\mu -2 \delta) \leq \cN_{\Sigma} (\mu -\delta)$ and, thus,
 \begin{align}
 	d_{\Sigma} (\mu) \leq \cN_{\Sigma} (\mu + \delta) - \cN_{\Sigma} (\mu - \delta) \leq  \cN_{\Gamma} (\mu + 2\delta)
	-  \cN_{\Gamma} (\mu - 2\delta) \, .
 \end{align}
 The corollary follows by combining this with \eqr{e:dgammamu}.
\end{proof}

In   \cite{dCW}, do Carmo-Wallach construct families of minimal submanifolds of the sphere, each  isometric to the same round sphere, 
 generalizing   results of Calabi, \cite{Ca}.  The boundary immersions of the families in \cite{dCW} lie   in a lower-dimensional affine  space.   Obviously, they have the same
  volume and, since they are contained in spheres, also the same entropy.  Thus,  the number of linearly independent coordinate functions can vary along a family.

\subsection{Shrinking curves}

In \cite{AL}, Abresch-Langer classified closed shrinking curves in $\RR^2$.  The only embedded one is the circle $\SS^1_{\sqrt{2}}$.  There are immersed solutions $\gamma_{m,\ell}$ with $\frac{1}{2} < \frac{m}{\ell} < \frac{\sqrt{2}}{2}$ where $m$ is the rotation index and $\ell$ is the number of periods of its curvature function.  Moreover, each $\gamma_{m,\ell}$ is convex and has self-intersections, so
$\lambda (\gamma_{m,\ell}) > 2$.

\begin{Lem}	\label{l:curvelambda}
If $\gamma^1 \subset \RR^N$ is a complete immersed shrinker and $\lambda (\gamma) < \infty$, then $\gamma$ is a rotation of either $\RR$, $\SS^1_{\sqrt{2}}$, or one of the $\gamma_{m,\ell}$'s.
\end{Lem}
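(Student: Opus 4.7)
The plan is to prove that $\gamma$ is automatically contained in a $2$-plane through the origin, and then invoke the Abresch-Langer classification in $\RR^2$. The key observation is that the shrinker equation for a curve is a second-order ODE whose right-hand side, at each point, lies in the span of $\gamma(s)$ and the unit tangent $T(s)$. Parametrize $\gamma$ by arc length $s$, so $T=\gamma'$ and $\bH = \gamma''$ (noting $\gamma''\perp T$ automatically). The shrinker equation $\bH=\gamma^\perp/2$ becomes
\[
\gamma''(s) \;=\; \tfrac12\,\gamma(s) \;-\; \tfrac12\,\langle \gamma(s),T(s)\rangle\,T(s).
\]

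Next, I would fix any $s_0$ and let $V=\mathrm{span}(\gamma(s_0),T(s_0))\subset\RR^N$, which is at most $2$-dimensional (and if the two vectors are parallel, $\gamma$ is forced to be a ray through the origin, i.e.\ a rotation of $\RR$). Let $\pi:\RR^N\to V^\perp$ be orthogonal projection and set $g(s)=\pi(\gamma(s))$, so $g'(s)=\pi(T(s))$. Applying $\pi$ to the ODE above and using that $\pi(T(s))=g'(s)$ yields the \emph{linear} second-order ODE
\[
g''(s) \;=\; \tfrac12\,g(s) \;-\; \tfrac12\,\langle \gamma(s),T(s)\rangle\,g'(s),
\]
in which the scalar coefficient $\langle\gamma,T\rangle$ is just a bounded function of $s$ (determined by $\gamma$ itself) independent of $g$. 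Since $g(s_0)=0$ and $g'(s_0)=0$, uniqueness for linear ODEs forces $g\equiv 0$, so $\gamma(s)\in V$ for all $s$.

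Thus after a rigid rotation of $\RR^N$ we may assume $\gamma\subset \RR^2\times\{0\}\subset\RR^N$, so $\gamma$ is a complete immersed shrinker in the plane. The Abresch-Langer classification \cite{AL} says that any such curve is a straight line through the origin, the circle $\SS^1_{\sqrt 2}$, or one of the periodic immersed curves $\gamma_{m,\ell}$ with $\tfrac12 < m/\ell < \tfrac{\sqrt 2}{2}$, which is the conclusion. (A line not through the origin has $\bH=0$ but $x^\perp\ne 0$, hence fails to be a shrinker, so the only line case is a rotation of $\RR$.)

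The main obstacle is the planarity step, and the point there is purely linear-algebraic rather than analytic: the shrinker ODE has the form $\gamma''\in\mathrm{span}(\gamma,T)$, so any subspace containing $\gamma(s_0)$ and $T(s_0)$ is invariant under the flow. The finite-entropy hypothesis is used only implicitly --- it guarantees that the Abresch-Langer analysis applies to all complete planar solutions and rules out pathological non-closed bounded orbits of the curve-shortening shrinker ODE beyond the ones in their list.
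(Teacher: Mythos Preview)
Your planarity argument via projection to $V^\perp$ is correct and is a fleshed-out version of the paper's final sentence, which simply invokes ``uniqueness for ODEs'' to pass from $\RR^N$ to $\RR^2$. On that step you and the paper agree.

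The gap is in the planar step. Abresch--Langer classify \emph{closed} planar shrinkers, not all complete ones, and complete non-closed planar shrinkers do exist: the curvature function $k(\theta)$ is periodic with some period $P$, and the curve closes only when $P/(2\pi)$ is rational; the irrational levels give complete immersed curves that wind forever in a bounded annulus, with infinite length and hence infinite entropy. You correctly flag that finite entropy must be used to exclude these, but you do not supply the argument --- and this is precisely what the body of the paper's proof is about. The paper uses the conserved quantity $k\,\e^{-|x|^2/4}=c$ to show that either $k\equiv 0$ (so $\gamma$ is a line through the origin) or $k\ge c>0$ with $|x|$ bounded, and then uses $\lambda(\gamma)<\infty$ together with $\cL\,|x|^2 = 2-|x|^2$ to bound $\int_\gamma \e^{|x|^2/4}$ and hence the length. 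Only once closedness is established does \cite{AL} apply. Your final paragraph gestures at this step but does not carry it out.
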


\begin{proof}
Suppose first that $\gamma \subset \RR^2$ with unit normal $\nn$ and let $k =  \frac{1}{2} \langle x , \nn \rangle$ be its geodesic curvature.  By \cite{AL}, the quantity $k \, \e^{ - \frac{|x|^2}{4}}$ is constant (this follows from differentiating the equation $k =   \frac{1}{2} \langle x , \nn \rangle$).  If $k$ ever vanishes, then it is identically zero and $\gamma = \RR$.  Otherwise, we can assume that $k>0$ and there is a constant $c> 0$ with
\begin{align}
	c\, \e^{ \frac{|x|^2}{4} } = k \leq \frac{|x|}{2} \, .
\end{align}
  It follows that $k \geq c > 0$  and $|x|$, and thus also $k$, 
  are bounded from above.
  Thus, $\gamma$ is a convex curve in a bounded region.  Since $\lambda (\gamma) < \infty$ and $\cL \, |x|^2 = 2- |x|^2$, we have
  \begin{align}
  	4\, c^2 \, \int_{\gamma}    \e^{   \frac{|x|^2}{4} }= 4\, \int_{\gamma}  k^2 \, \e^{ - \frac{|x|^2}{4} }  \leq \int_{\gamma}  |x|^2 \, \e^{ - \frac{|x|^2}{4} } = 2 \, \int_{\gamma}  \e^{ - \frac{|x|^2}{4} }  = 2 \, \lambda (\gamma) \, .
  \end{align}
Therefore, $\gamma$ has finite length and must be one of the Abresch-Langer curves.
Finally, by uniqueness for ODE's, these are also the only shrinking curves in $\RR^N$ (up to rotation).  
\end{proof}

The next lemma shows that coordinate functions generate the entire $\frac{1}{2}$-eigenspace on the product of an Abresch-Langer curve with $\RR^{n-1}$:

\begin{Lem}	\label{l:spec}
For any $n$, $m$ and $\ell$, we have $d_{\gamma_{m,\ell} \times \RR^{n-1}} (\frac{1}{2})= n+1$.
\end{Lem}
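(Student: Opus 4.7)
My approach is to exploit the product structure $\Sigma := \gamma_{m,\ell} \times \RR^{n-1}$, reducing the problem to separate computations on each factor. The first step is to observe that on a product of shrinkers the drift Laplacian splits as $\cL_\Sigma = \cL_{\gamma_{m,\ell}} + \cL_{\RR^{n-1}}$, with each summand acting in its own variables. Since $\gamma_{m,\ell}$ is a compact smooth curve (intrinsically $\SS^1$) and $\cL$ is a second-order self-adjoint elliptic operator on the Gaussian $L^2$ space, standard spectral theory gives a complete $L^2(e^{-f})$ orthonormal basis of eigenfunctions $\{\phi_i\}$ of $\cL_{\gamma_{m,\ell}}$. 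On $\RR^{n-1}$ the drift Laplacian is the Ornstein--Uhlenbeck operator with the classical Hermite basis $\{\psi_\alpha\}$ and eigenvalues $\tfrac{|\alpha|}{2}$. Tensor products $\phi_i(p)\,\psi_\alpha(y)$ then form a complete orthonormal basis of $L^2(\Sigma, e^{-f})$, each an eigenfunction with eigenvalue equal to the sum. Every $\mu$-eigenspace on $\Sigma$ therefore decomposes as a direct sum over pairs $(\mu_i, \nu_\alpha)$ with $\mu_i + \nu_\alpha = \mu$.

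Next I would enumerate such pairs for $\mu = \tfrac{1}{2}$. By Lemma \ref{l:std}, all eigenvalues of $\cL_{\gamma_{m,\ell}}$ are nonnegative, with $0$ occurring only for constants since $\gamma_{m,\ell}$ is connected. The eigenvalues on $\RR^{n-1}$ lie in $\{0, \tfrac{1}{2}, 1, \tfrac{3}{2}, \ldots\}$, so the only admissible pairs summing to $\tfrac{1}{2}$ are $(0, \tfrac{1}{2})$, contributing $1 \cdot (n-1)$ to the dimension, and $(\tfrac{1}{2}, 0)$, contributing $d_{\gamma_{m,\ell}}(\tfrac{1}{2})$. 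Hence $d_\Sigma(\tfrac{1}{2}) = d_{\gamma_{m,\ell}}(\tfrac{1}{2}) + (n-1)$.

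The final and main step is to show $d_{\gamma_{m,\ell}}(\tfrac{1}{2}) = 2$. Parametrizing $\gamma_{m,\ell}$ by arclength $s \in \RR/L\ZZ$ with unit tangent $T(s)$, the eigenvalue equation $\cL u = -\tfrac{1}{2}\,u$ becomes the second order linear ODE
\begin{align*}
u''(s) - \tfrac{1}{2}\,\langle x(s), T(s) \rangle\,u'(s) + \tfrac{1}{2}\,u(s) = 0.
\end{align*}
Its global solution space on $\RR$ is $2$-dimensional, so the $L$-periodic subspace has dimension at most $2$. The restrictions $x_1|_{\gamma_{m,\ell}}$ and $x_2|_{\gamma_{m,\ell}}$ give two periodic solutions, and they are linearly independent since the Abresch-Langer curve is closed and not contained in any line through the origin. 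This yields $d_{\gamma_{m,\ell}}(\tfrac{1}{2}) = 2$ and hence $d_\Sigma(\tfrac{1}{2}) = n+1$. The only subtle point is the tensor-product decomposition of the spectrum on $\Sigma$, which rests on completeness of the eigenbases on both factors.
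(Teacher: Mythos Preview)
Your proof is correct and follows the same overall strategy as the paper: split $\cL$ along the product, reduce to the curve factor, and then use ODE uniqueness on $\gamma_{m,\ell}$ to get $d_{\gamma_{m,\ell}}(\tfrac12)=2$. The one genuine difference is in how the reduction to the curve is carried out. You invoke the full tensor-product spectral decomposition of $L^2(\Sigma,\e^{-f})$ via the Hermite basis on $\RR^{n-1}$ and an eigenbasis on the compact curve, and then enumerate pairs of eigenvalues summing to $\tfrac12$. The paper instead uses a lighter differentiation trick: for a $\tfrac12$-eigenfunction $u$, set $u_i=\partial_{y_i}u$ and use the commutation $[\partial_{y_i},\cL]=-\tfrac12\,\partial_{y_i}$ to get $\cL u_i=0$; since $u\in W^{1,2}$ gives $u_i\in L^2$, each $u_i$ is constant, so $u=\sum a_i y_i + g$ with $g$ on $\gamma$. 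This sidesteps the completeness of the tensor basis entirely. Your route is more systematic and would transfer verbatim to other products of shrinkers with known spectra; the paper's route is shorter and self-contained but specific to the Euclidean factor.
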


\begin{proof}
Set $\gamma = \gamma_{m,\ell}$  and $\Gamma = \gamma_{m,\ell} \times \RR^{n-1}$.
Following lemma $3.26$ in \cite{CM7}, let $y_i$ be coordinates on $\RR^{n-1}$ so that $\cL$ splits as
\begin{align}
	\cL = \cL_{\gamma} + \cL_{y} \, , 
\end{align}
where $\cL_{\gamma}$ is the drift operator on $\gamma \subset \RR^2$ and $\cL_y$ is the drift operator on $\RR^{n-1}$.
  Suppose that $u$ is an $L^2$ and, thus also $W^{1,2}$, function on $\Gamma$ satisfying $\cL \, u = - \frac{1}{2} \, u$.
Set $u_i = \frac{\partial u}{\partial y_i}$ and observe that
$
	\cL \, u_i = 0 $.  
It follows that $u_i$ is constant.  Since this holds for each $i$,  $u= \sum_i a_i \, y_i + g$ where the $a_i$'s are constants and $g$ is a function on $\gamma$.
Consequently, to prove the lemma, we must show that the two coordinate functions generate the entire $\frac{1}{2}$-eigenspace on $\gamma \subset \RR^2$.  However, this follows immediately from uniqueness for the second order ODE $\cL_{\gamma} \, g = - \frac{1}{2} \, g$.
\end{proof}
  
\subsection{Rigidity of spheres and cylinders}

\begin{Cor}  \label{l:spechyp2}
Given $k < n$, there exists $R> 2n$ so if 
  $\Sigma^n\subset \RR^N$ is a shrinker with $\lambda (\Sigma) < \infty$ and $B_R \cap \Sigma$ is the graph of a vector field $U$ over   
  $\SS^k_{\sqrt{2k}}\times \RR^{n-k}$  with $\| U \|_{C^1} < 1/R$, then there is a $(n+1)$-dimensional Euclidean space $\cW$ so that $\Sigma \subset \cW$.
\end{Cor}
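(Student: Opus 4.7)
The plan is to reduce the statement to a bound on the multiplicity of the $\frac{1}{2}$-eigenvalue of the drift Laplacian $\cL$, and then to apply lower semi-continuity of the multiplicity from Corollary \ref{p:codim2}.

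\textbf{Step 1: Compute $d_{\cC}(1/2)$ for the cylinder $\cC=\SS^k_{\sqrt{2k}}\times \RR^{n-k}$.} On a product, $\cL$ splits as $\cL=\cL_{\SS^k}+\cL_{\RR^{n-k}}$, since $|x|$ is constant along the sphere factor. On $\SS^k_{\sqrt{2k}}$ the drift term vanishes, so $\cL_{\SS^k}=\Delta_{\SS^k}$; its first nonzero eigenvalue on a round sphere of radius $\sqrt{2k}$ equals $\frac{1}{2}$, with eigenspace exactly the restrictions of the $k+1$ ambient linear functions. I would then repeat the argument used in Lemma \ref{l:spec}: if $u$ is $L^2$ with $\cL\,u=-\frac{1}{2}u$, differentiating in the $y_j$ directions of $\RR^{n-k}$ shows $\partial_{y_j}u$ is $\cL$-harmonic and hence constant, so $u=\sum a_j y_j+g(x^{\text{sphere}})$; plugging back forces $g$ to be linear on the sphere. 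This gives $d_{\cC}(1/2)=(k+1)+(n-k)=n+1$.

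\textbf{Step 2: Transport the multiplicity bound to $\Sigma$.} Since the hypothesis ``$B_R\cap\Sigma$ is a graph over $\cC$ of $U$ with $\|U\|_{C^1}\leq 1/R$'' makes $\Sigma$ and $\cC$ arbitrarily $(\epsilon,R,C^1)$-close as $R\to\infty$, Corollary \ref{p:codim2} applies with $\mu=\frac{1}{2}$ and $\Gamma=\cC$: for $R$ large enough (depending only on $n$, $k$, and $\cC$, hence just on $n$ and $k$),
\begin{equation*}
d_\Sigma(1/2)\;\leq\; d_{\cC}(1/2)\;=\;n+1.
\end{equation*}
The only subtlety here is that Corollary \ref{p:codim2} is stated for a single $\Gamma$, so I would just note that $\cC$ itself is the fixed reference shrinker, and the hypothesis $\lambda(\Sigma)<\infty$ gives access to the spectral theory on $\Sigma$ via Theorem \ref{t:heatkernel}.

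\textbf{Step 3: Translate a multiplicity bound into a codimension bound.} Every ambient coordinate function $x_1,\dots,x_N$ restricts to a $\frac{1}{2}$-eigenfunction of $\cL$ on $\Sigma$ (lemma $3.20$ of \cite{CM6}, as already used in the paper). Their span, as functions on $\Sigma$, has dimension at most $d_\Sigma(1/2)\leq n+1$. Therefore there are at least $N-(n+1)$ linearly independent linear relations $\sum_j a_{ij}x_j\equiv 0$ on $\Sigma$; each is a linear hyperplane through the origin containing $\Sigma$, and their intersection is a Euclidean subspace $\cW$ of dimension at most $n+1$ containing $\Sigma$. This gives exactly the conclusion.

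\textbf{Main obstacle.} The only nontrivial step is Step 1, the precise computation of the $\frac{1}{2}$-eigenspace on the cylinder; everything else is direct application of results already proven in the excerpt (the coordinate functions being $\frac{1}{2}$-eigenfunctions, lower semi-continuity of spectral multiplicity, and the ``linear relation $=$ hyperplane'' observation used in Corollary \ref{c:counting2}). The spherical computation is standard, and the separation-of-variables argument is the same trick as in Lemma \ref{l:spec}, so no new technical difficulty arises.
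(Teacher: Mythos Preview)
Your proposal is correct and follows essentially the same approach as the paper's proof. The paper computes $d_{\SS^k_{\sqrt{2k}}\times\RR^{n-k}}(\frac{1}{2})=n+1$ by citing lemma~3.26 of \cite{CM7} (the same separation-of-variables argument you sketch in Step~1) and then invokes Corollary~\ref{p:codim2} with $\mu=\frac{1}{2}$; your Step~3, while not spelled out in the paper's proof of this corollary, is exactly the ``linear relation $=$ hyperplane'' argument the paper uses elsewhere (after Theorem~\ref{t:maria}).
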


\begin{proof} 
 On the cylinder $\SS^k_{\sqrt{2k}}\times \RR^{n-k}$  (see, e.g., section $3$ in \cite{CM7}), the low eigenvalues of $\cL$ are $\mu_0 = 0$, given by the constants, 
  $\frac{1}{2}$ with multiplicity $n+1$, and then there is a gap to
  $1$.
More precisely, it follows from lemma $3.26$ in \cite{CM7} (and its proof{\footnote{Lemma $3.26$ in \cite{CM7} deals with eigenvalue $1$; obvious modifications give eigenvalue $\frac{1}{2}$ as well.}}) that:
\begin{itemize}
\item If $u \in W^{1,2}$ has $\cL \,  u = - \frac{1}{2} \, u$, then $u = f(\theta) + \sum a_j \, y_j$ where $f$ is a $\frac{1}{2}$-eigenfunction on $\SS^k$, $a_j$ are constants, and $y_j$ are  coordinate functions on the axis $\RR^{n-k}$.
\end{itemize}
It follows that $d_{ \SS^k_{\sqrt{2k}}\times \RR^{n-k} } ( \frac{1}{2}) = n+1$ and Corollary \ref{p:codim2} with $\mu = \frac{1}{2}$ completes the proof.
\end{proof}

\begin{proof}[Proof of 
Theorem
\ref{t:rigidity}] As   long as $R$ is large enough, 
Corollary \ref{l:spechyp2} gives  that $\Sigma$ is a hyperplane in some affine $(n+1)$-space.   By assumption, it is also close to a cylinder.  We will apply 
the main rigidity theorem for hypersurfaces from \cite{CIM}, but first need to establish a uniform  bound for $\int_{\Sigma} \e^{-f}$ (which gives an entropy bound).  However, this follows from the closeness to a cylinder in $B_R$, for $R$ large, since $\int_{\Sigma} (|x|^2 - 2n) \, \e^{-f} = 0$.

In the case of a sphere (i.e., $k=n$), we   can argue directly without \cite{CIM}.  Namely, since $\Sigma$ is a shrinker, we have $\cL \, (|x|^2-2n) = -  (|x|^2-2n)$.  However, 
  $1$ is not in the spectrum for $\SS^n_{\sqrt{2n}}$, so Corollary \ref{p:codim2} gives
 $1$  is also not in the spectrum of   $\Sigma$ for $\Sigma$ sufficiently close.  It follows that $|x|^2 - 2n \equiv 0$ and
  $\Sigma \subset \partial B_{\sqrt{2n}}$.
\end{proof}
 
 We get the corresponding statement for products with the Abresch-Langer curves $\gamma_{m,\ell}$:

\begin{Cor}  \label{l:spechyp2AL}
Given $m , \ell , n$, there exists $R> 2n$ so if 
  $\Sigma^n\subset \RR^N$ is a shrinker with $\lambda (\Sigma) < \infty$ and $B_R \cap \Sigma$ is a parameterized graph of a vector field $U$ over  
  $\gamma_{m,\ell} \times \RR^{n-1}$  with $\| U \|_{C^1} < 1/R$, then there is a $(n+1)$-dimensional Euclidean space $\cW$ so that $\Sigma \subset \cW$.
\end{Cor}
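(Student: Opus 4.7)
The plan is to mirror the proof of Corollary \ref{l:spechyp2} almost verbatim, replacing the spherical factor by the Abresch--Langer curve and substituting Lemma \ref{l:spec} for the analogous spectral computation on $\SS^k_{\sqrt{2k}}\times\RR^{n-k}$. The essential ingredients have already been assembled: Lemma \ref{l:spec} pins down the multiplicity of the eigenvalue $\frac{1}{2}$ on the model $\Gamma=\gamma_{m,\ell}\times\RR^{n-1}$ as exactly $n+1$, and Corollary \ref{p:codim2} provides the lower semicontinuity of spectral multiplicity under $(\epsilon,R,C^1)$-closeness. Note that $\Gamma$ has finite entropy: the factor $\gamma_{m,\ell}$ has $\lambda(\gamma_{m,\ell})<\infty$ by Lemma \ref{l:curvelambda}, and taking a Euclidean product with $\RR^{n-1}$ preserves finiteness of $\lambda$.

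The first step is to reconcile the hypothesis (a parameterized graph) with the framework of Proposition \ref{p:stable} and Corollary \ref{p:codim2}, which were stated for normal graphs. Since $\gamma_{m,\ell}$ is immersed rather than embedded when $m/\ell>1/2$, the model $\Gamma$ itself must be viewed as a parameterized immersion, and closeness must be measured through this parameterization. However, the proof of Proposition \ref{p:stable} only uses that on a large ball $B_r$, the identification between the two shrinkers is close to an isometry in $L^2$ and almost preserves the Dirichlet energy, in order to transfer $L^2$-orthonormal trial functions from one domain to the other. This is a local statement on the parameterizing domain and goes through without change in the immersed setting.

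Once Proposition \ref{p:stable}, and thus Corollary \ref{p:codim2}, is available for $\Gamma$, apply Corollary \ref{p:codim2} with $\mu=\frac{1}{2}$ to obtain $\epsilon,R>0$ so that any shrinker $\Sigma$ of finite entropy which is $(\epsilon,R,C^1)$-close to $\Gamma$ satisfies
\begin{equation*}
d_\Sigma\!\left(\tfrac{1}{2}\right)\leq d_\Gamma\!\left(\tfrac{1}{2}\right) = n+1,
\end{equation*}
where the equality is Lemma \ref{l:spec}. Since every ambient coordinate function $x_1,\dots,x_N$ restricts to a $\frac{1}{2}$-eigenfunction of $\cL$ on $\Sigma$ (by lemma $3.20$ of \cite{CM6}), the map $\RR^N\to L^2(\Sigma,\e^{-f})$ sending $(c_1,\dots,c_N)\mapsto\sum_i c_i\,x_i$ lands in the $\frac{1}{2}$-eigenspace and hence has rank at most $n+1$. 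Its kernel, of dimension at least $N-n-1$, consists of vectors $c$ with $\sum c_i\, x_i\equiv 0$ on $\Sigma$, i.e.\ linear hyperplanes through the origin containing $\Sigma$. Intersecting these yields an $(n+1)$-dimensional linear subspace $\cW\subset\RR^N$ with $\Sigma\subset\cW$.

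The only genuine work lies in verifying that the parameterized--graph version of closeness still drives the stability argument of Proposition \ref{p:stable}; everything else is a direct transcription of the proof of Corollary \ref{l:spechyp2}, with Lemma \ref{l:spec} in place of the cylinder spectral identity. Once this is in hand, the remaining steps are bookkeeping.
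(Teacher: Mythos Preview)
Your proposal is correct and follows essentially the same route as the paper: the paper's own proof is the one-liner ``This follows by combining Lemma \ref{l:spec} and Corollary \ref{p:codim2} with $\mu = \frac{1}{2}$,'' which is exactly your argument. Your additional remarks about the immersed nature of $\gamma_{m,\ell}$ and the need to interpret closeness through a parameterization are a valid elaboration of a point the paper leaves implicit, but they do not constitute a different approach.
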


\begin{proof}
This follows by combining Lemma \ref{l:spec}  and Corollary \ref{p:codim2} with $\mu = \frac{1}{2}$.
\end{proof}

The rigidity should also hold for $\gamma_{m,\ell} \times \RR^{n-1}$ by combining Corollary \ref{l:spechyp2AL} with a modification of \cite{CIM}.   See \cite{CM11} for rigidity of cylinders in Ricci flow.

    \section{Sharp bounds for codimension}
    
    In this section, $M_t^n \subset \RR^N$ is an ancient MCF with    $\lambda (M_t) \leq \lambda_0$ and $\phi = \bH + \frac{x^{\perp}}{2t}$.

  \subsection{Preserving orthogonality}
  
  The next lemma shows that a caloric function that integrates to zero on one time slice must remain nearly orthogonal to constants, with the error bounded by the change in the Gaussian area $I_1(t)$.

   \begin{Lem}	\label{l:diffcont}
If   $(\partial_t - \Delta)\, u = 0$ and $J_{t_1} (u,1) =0$ for some $t_1 < 0$, then for any $t_2 \in [t_1 , 0)$
\begin{align}	\label{e:diffcont1}
  	\left|  J_{t_2} (u ,1 ) \right|^2 & \leq    I_{u}(t_1)  \,  \left| I_1 (t_1) - I_{1} (t_2)
	\right|     \, .
  \end{align}
   \end{Lem}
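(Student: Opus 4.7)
The plan is to differentiate $J_t(u,1)$ in $t$ using the weighted monotonicity formula \eqref{e:eckerH}, then apply Cauchy–Schwarz to the resulting spacetime integral. Because $u$ is caloric, plugging $v=u$ into \eqref{e:eckerH} gives
\begin{align}
\frac{d}{dt} J_t(u,1) = -\left(-4\pi t\right)^{-\frac{n}{2}} \int_{M_t} u \, |\phi|^2 \, \e^{\frac{|x|^2}{4t}}\, .
\end{align}
Using the hypothesis $J_{t_1}(u,1)=0$ and integrating from $t_1$ to $t_2$, I would write
\begin{align}
J_{t_2}(u,1) = -\int_{t_1}^{t_2} \left(-4\pi t\right)^{-\frac{n}{2}} \int_{M_t} u \, |\phi|^2 \, \e^{\frac{|x|^2}{4t}}\, dt\, .
\end{align}

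Next, splitting the integrand as $(u\,|\phi|)\cdot|\phi|$ and applying the Cauchy–Schwarz inequality in spacetime with respect to the measure $d\mu_t = (-4\pi t)^{-n/2}\, \e^{|x|^2/(4t)}$, I obtain
\begin{align}
|J_{t_2}(u,1)|^2 \leq \left(\int_{t_1}^{t_2}\int_{M_t} u^2\,|\phi|^2\, d\mu_t\, dt\right)\left(\int_{t_1}^{t_2}\int_{M_t} |\phi|^2\, d\mu_t\, dt\right).
\end{align}

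Now both factors are controlled by Lemma \ref{l:iprimelem}. For the second factor, taking $u\equiv 1$ in \eqref{e:Ipri} gives $-I_1'(t) = \int_{M_t}|\phi|^2 \, d\mu_t$, so integrating from $t_1$ to $t_2$ yields exactly $I_1(t_1)-I_1(t_2)$ (which equals $|I_1(t_1)-I_1(t_2)|$ by monotonicity). For the first factor, \eqref{e:Ipri} gives $\int_{M_t} u^2\,|\phi|^2\, d\mu_t \leq -I_u'(t)$, so integrating gives $I_u(t_1)-I_u(t_2) \leq I_u(t_1)$, again using monotonicity and nonnegativity of $I_u$. Multiplying the two bounds produces \eqref{e:diffcont1}.

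There is no serious obstacle here; the only subtlety is to make sure the Cauchy–Schwarz is set up so that the factor of $|\phi|^2$ (which drives both the failure of constants to remain orthogonal to $u$ and the decay of $I_1$) appears with the correct weight in both resulting integrals. The identities from Lemma \ref{l:iprimelem} are then perfectly matched to both pieces.
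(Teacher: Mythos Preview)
Your proposal is correct and follows essentially the same route as the paper: differentiate $J_t(u,1)$ via the weighted monotonicity formula \eqref{e:eckerH}, integrate, apply Cauchy--Schwarz in spacetime, and identify the two factors with $I_u(t_1)-I_u(t_2)$ and $I_1(t_1)-I_1(t_2)$ through \eqref{e:Ipri}. The paper phrases the bound on the second factor as a direct consequence of \eqref{e:eckerH} rather than \eqref{e:Ipri} with $u\equiv 1$, but these are the same computation.
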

  
  \begin{proof}
  Given $t \in [t_1 , t_2]$,   \eqr{e:eckerH}  gives the derivative   
  \begin{align}
  	\frac{d}{dt} \, J_{t} (u , 1) = - \left( -4\pi \, t \right)^{ - \frac{n}{2} } \, \int_{M_t} u \, |\phi|^2 \, \e^{ \frac{ |x|^2}{4t} } \, .
  \end{align}
  Since $J_{t_1}(u,1)=0$, integrating  from $t_1$ to $t_2$ and using the Cauchy-Schwarz inequality gives
  \begin{align}
  	\left| J_{t_2} (u,1) \right|^2 &\leq  \left\{ \int_{t_1}^{t_2} \left( -4\pi \, t \right)^{ - \frac{n}{2} } \, \int_{M_t} u^2 \, |\phi|^2 \, \e^{ \frac{ |x|^2}{4t} } \right\}  \, 
		\left\{ \int_{t_1}^{t_2}  \left( -4\pi \, t \right)^{ - \frac{n}{2} } \, \int_{M_t}   |\phi|^2 \, \e^{ \frac{ |x|^2}{4t} } \right\} \, .
  \end{align}
By  \eqr{e:eckerH}, the last integral on the right is bounded by  $ \left| I_1 (t_1) - I_{1} (t_2)
	\right| $.    Similarly, by Lemma \ref{l:iprimelem}, the first integral is bounded by $\left| \int_{t_1}^{t_2} I_u'(t) \right| \leq I_{u}(t_1) - I_u(t_2)$.
	\end{proof}

	In the next two lemmas, $V \in \SS^{N-1}$ is a unit vector and $v$ the linear function $v(x) = \langle x , V \rangle$.   Let $\cL_t = \Delta + \frac{1}{2t} \, \nabla_{x^T}$ be the drift operator that is symmetric for $\e^{ \frac{|x|^2}{4t}}$.  We will use that
	\begin{align}	\label{e:cLtV}
		\cL_t \, v = \dv_{M_t} V^T + \frac{1}{2t} \langle x^T , V \rangle = -  \langle V , \bH \rangle + \langle \frac{x^T}{2t} , V \rangle
		= \frac{v}{2t} - \langle \phi , V \rangle \, .
	\end{align}
	
	\begin{Lem}	\label{l:vV1}
	We get for   $t_1 < t_2 <0$ that
	\begin{align}
		\int_{t_1}^{t_2}  \left( -4\pi \, t \right)^{ - \frac{n}{2} } \, \int_{M_t}     \frac{v^2}{-t} \,  |\phi|^2  \, \e^{ \frac{ |x|^2}{4t} } \, dt & \leq  \left|
		  \frac{ I_v(t_1)}{t_1}  -  \frac{I_v(t_2)}{t_2}
		 \right|+ C_n \, \sqrt{\lambda_0}  \,\left( \frac{t_1}{t_2} \right)^{ \frac{1}{2}}  \,  \left| I_1 (t_1) - I_{1} (t_2)
	\right|^{ \frac{1}{2} }
		  \, . \notag
	\end{align}
	\end{Lem}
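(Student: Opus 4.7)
The plan is to derive an explicit identity for
\begin{align*}
A := \int_{t_1}^{t_2}(-4\pi t)^{-n/2}\int_{M_t}\frac{v^2|\phi|^2}{-t}\,\e^{\frac{|x|^2}{4t}}\,dt
\end{align*}
by applying the weighted monotonicity formula \eqref{e:eckerH} to the test function $\frac{v^2}{-t}$, then using the drift identity \eqref{e:cLtV} to eliminate the offending $\int v^2/t^2\,\Phi$ term, and finally closing the estimate with Cauchy--Schwarz paired with the localization inequality of Lemma \ref{l:localizeMt}. Write $\Phi(x,t):=(-4\pi t)^{-n/2}\,\e^{|x|^2/(4t)}$ for brevity.

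Since the coordinate function $v(x)=\langle V,x\rangle$ is caloric on $M_t$ (the position $x$ satisfies $\partial_t x=-\bH=\Delta x$), one has $(\partial_t-\Delta)v^2=-2|\nabla v|^2$, and \eqref{e:eckerH} applied to $u=v^2/(-t)$ yields
\begin{align*}
\frac{d}{dt}\frac{I_v(t)}{-t}=\int_{M_t}\Bigl(\frac{-2|\nabla v|^2}{-t}+\frac{v^2}{t^2}-\frac{v^2|\phi|^2}{-t}\Bigr)\Phi.
\end{align*}
Integrating over $[t_1,t_2]$ expresses $A$ in terms of boundary values of $I_v/(-t)$, an integral of $|\nabla v|^2$, and the integral of $v^2/t^2$. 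I then multiply \eqref{e:cLtV} by $\frac{2v}{t}\Phi$, integrate, and use the symmetry identity $\int v\,\cL_t v\,\Phi=-\int|\nabla v|^2\Phi$ to obtain
\begin{align*}
\int\frac{v^2}{t^2}\Phi=\frac{2}{-t}\int|\nabla v|^2\Phi-\frac{2}{-t}\int v\,\langle\phi,V\rangle\,\Phi.
\end{align*}
Plugging this in cancels the $|\nabla v|^2$ contributions and leaves the clean identity
\begin{align*}
A=\frac{I_v(t_2)}{t_2}-\frac{I_v(t_1)}{t_1}-2\int_{t_1}^{t_2}\frac{1}{-t}\int v\,\langle\phi,V\rangle\,\Phi\,dt.
\end{align*}

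To bound the remaining cross-term I apply Cauchy--Schwarz in space, using $|\langle\phi,V\rangle|\leq|\phi|$ and $\int|\phi|^2\Phi=-I_1'(t)$, to get $\bigl|\int v\,\langle\phi,V\rangle\,\Phi\bigr|\leq I_v(t)^{1/2}|I_1'(t)|^{1/2}$, and then Cauchy--Schwarz in time:
\begin{align*}
\int_{t_1}^{t_2}\frac{I_v(t)^{1/2}|I_1'(t)|^{1/2}}{-t}\,dt\leq\Bigl(\int_{t_1}^{t_2}\frac{I_v(t)}{t^2}\,dt\Bigr)^{1/2}\bigl(I_1(t_1)-I_1(t_2)\bigr)^{1/2}.
\end{align*}
The first factor is controlled by $v^2\leq|x|^2$ combined with Lemma \ref{l:localizeMt} for $u\equiv 1$, which gives $\int\frac{|x|^2}{-t}\Phi\leq 4n\lambda_0+(-4t)|I_1'(t)|$; dividing by $-t$ and integrating yields $\int_{t_1}^{t_2}I_v(t)/t^2\,dt\leq 4n\lambda_0\ln(t_1/t_2)+4(I_1(t_1)-I_1(t_2))$. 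Using $\ln x\leq x$ for $x\geq 1$ and $I_1(t_1)-I_1(t_2)\leq\lambda_0$, this is $\leq C_n\lambda_0(t_1/t_2)$, and the stated bound follows.

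The main technical point is the cancellation enabled by \eqref{e:cLtV}: a direct estimate of $\int I_v(t)/t^2\,dt$ on its own only produces a logarithmic dependence on $t_1/t_2$, which cannot generate the half-power on $|I_1(t_1)-I_1(t_2)|$ appearing in the lemma. Only after \eqref{e:cLtV} converts the quadratic-in-$t$ term into a genuine $v\langle\phi,V\rangle$ cross-term does Cauchy--Schwarz against $|\phi|^2$ produce the required square-root structure, with $(t_1/t_2)^{1/2}$ then emerging from bounding the auxiliary $\int I_v(t)/t^2\,dt$ via the localization inequality.
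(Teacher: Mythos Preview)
Your proof is correct and follows the same overall strategy as the paper: both derive the identity \eqr{e:startvV1} by combining the weighted monotonicity formula with the drift identity \eqr{e:cLtV}, and both then estimate the resulting cross-term $\int_{t_1}^{t_2}\frac{1}{-t}\int v\langle\phi,V\rangle\,\Phi\,dt$.

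The only difference is in how that cross-term is handled. The paper applies the pointwise absorbing inequality $\bigl|\tfrac{2v\langle\phi,V\rangle}{-t}\bigr|\le\epsilon\,\tfrac{v^2}{t^2}+\tfrac{1}{\epsilon}|\phi|^2$, integrates, bounds $\int_{t_1}^{t_2} I_v(t)/t^2\,dt\le I_v(t_1)/(-t_2)$ directly by the monotonicity of $I_v$ together with the linear growth $I_v(t_1)\le -C_n\lambda_0 t_1$ from \eqr{e:Iupolyd}, and then optimizes over $\epsilon$. You instead use Cauchy--Schwarz in space and then in time, and control $\int_{t_1}^{t_2} I_v/t^2\,dt$ by invoking the localization Lemma~\ref{l:localizeMt} with $u\equiv 1$. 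Both routes yield the same bound $C_n\lambda_0\,(t_1/t_2)$ for that auxiliary integral, so the endpoints agree; the paper's route is slightly more economical since it avoids the detour through localization and just uses monotonicity plus the polynomial growth already recorded in Lemma~\ref{l:iprimelem}.
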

	 
	 \begin{proof}
	 Using \eqr{e:eckerH} for $\frac{v^2}{-t}$, then the divergence theorem and \eqr{e:cLtV}  gives
	\begin{align}	\label{e:startvV1}
		\frac{d}{dt} \, \left( \frac{I_v(t)}{-t} \right) &=    \left( -4\pi \, t \right)^{ - \frac{n}{2} } \, \int_{M_t} \left\{   \frac{v^2}{t^2} - \frac{2|\nabla v|^2}{-t} - \frac{v^2}{-t} \,  |\phi|^2  \right\}\, \e^{ \frac{ |x|^2}{4t} } \notag \\
		&=    \left( -4\pi \, t \right)^{ - \frac{n}{2} } \, \int_{M_t} \left\{    - \frac{2v \, \langle \phi , V \rangle }{-t} - \frac{v^2}{-t} \,  |\phi|^2  \right\}\, \e^{ \frac{ |x|^2}{4t} }  \, .
	\end{align}
	Using that $|V| =1$, we get the absorbing inequality for any $\epsilon > 0$
	\begin{align}	\label{e:7point8}
		\left|    \frac{2v \, \langle \phi , V \rangle }{-t} \right| \leq \epsilon \, \frac{v^2}{t^2} + \frac{1}{\epsilon} \, |\phi|^2 \, .
	\end{align}
Using \eqr{e:7point8} in \eqr{e:startvV1}, we get that
	\begin{align}
		 \left( -4\pi \, t \right)^{ - \frac{n}{2} } \, \int_{M_t}   \frac{v^2}{-t} \,  |\phi|^2  \, \e^{ \frac{ |x|^2}{4t} } \leq -\frac{d}{dt} \, \left( \frac{I_v(t)}{-t} \right)  + \epsilon \, \frac{I_v(t)}{t^2} + \frac{1}{\epsilon} \,  \left( -4\pi \, t \right)^{ - \frac{n}{2} } \, \int_{M_t}     {|\phi|^2} \,  \e^{ \frac{ |x|^2}{4t} } \, .
	\end{align}
	Integrating this from $t_1$ to $t_2$ and using the monotonicity of $I_v$ to bound the second term on the right and Huisken's monotonicity on the last term gives
	\begin{align}
		\int_{t_1}^{t_2}  \left( -4\pi \, t \right)^{ - \frac{n}{2} } \, \int_{M_t}     \frac{v^2}{-t} \,  |\phi|^2  \, \e^{ \frac{ |x|^2}{4t} } \, dt & \leq  \left|
		  \frac{ I_v(t_1)}{t_1}  -  \frac{I_v(t_2)}{t_2}
		 \right|+ \epsilon \, \frac{I_v (t_1)}{-t_2} +   \frac{\left| I_1 (t_1) - I_{1} (t_2)
	\right|}{\epsilon}
		  \, . \notag
	\end{align}
	The lemma follows by using  that  $I_v (t) \leq -C_n \, \lambda_0 \, t$ (cf. \eqr{e:Iupolyd}) and  optimizing $\epsilon$.
    \end{proof}
    
    The next lemma shows that the inner product of a caloric function with a fixed linear function grows approximately linearly in $t$.

\begin{Lem}	\label{l:vV2}
If $u_t = \Delta u$ and $t_1 < t_2 < 0$, then for any $\epsilon_1 \in (0,1/2)$ and all $t \in [t_1 , t_2]$
	\begin{align}	 
		& \sqrt{-t_2} \, \left|   \frac{ J_{t_1} (u , v) }{t_1}  - \frac{J_{t} (u , v)}{t}\right|  \leq \frac{5}{2} \, \epsilon_1  \,  I_{u}(t_1)  + \frac{1}{\epsilon_1} \, \left| I_1 (t_1) - I_{1} (t_2)
	\right|   
	\notag \\
	&\qquad  +  \frac{1}{2\, \epsilon_1} \left\{
	\, \left|
		  \frac{ I_v(t_1)}{t_1}  -  \frac{I_v(t_2)}{t_2}
		 \right|+ C_n \, \sqrt{\lambda_0}  \,\left( \frac{t_1}{t_2} \right)^{ \frac{1}{2}}  \,  \left| I_1 (t_1) - I_{1} (t_2)
	\right|^{ \frac{1}{2} }	\right\}   \, .  \notag
	\end{align}
	\end{Lem}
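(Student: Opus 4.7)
The plan is to differentiate $J_t(u,v)/t$, integrate from $t_1$ to $t$, and balance $s$-weighted absorbing inequalities against the factor $\sqrt{-t_2}$ on the left.

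First, I would apply \eqref{e:eckerH} with integrand $uv$. Since $u_t = \Delta u$ and the linear function $v(x) = \langle x, V\rangle$ satisfies $v_t = \langle x_t, V\rangle = -\langle \bH, V\rangle = \Delta_{M_t} v$ along MCF, we obtain $(uv)_t - \Delta(uv) = -2\langle \nabla u, \nabla v\rangle$. Integration by parts against the Gaussian weight combined with \eqref{e:cLtV} converts $-2\int \langle \nabla u,\nabla v\rangle e^{|x|^2/(4t)}$ into $\frac{1}{t}\int uv\, e^{|x|^2/(4t)} - 2\int u\langle \phi, V\rangle e^{|x|^2/(4t)}$, which yields
\begin{align*}
\frac{d}{dt}\left(\frac{J_t(u,v)}{t}\right) = -\frac{2}{t}(-4\pi t)^{-n/2}\int u\langle \phi, V\rangle e^{\frac{|x|^2}{4t}} - \frac{1}{t}(-4\pi t)^{-n/2}\int uv|\phi|^2 e^{\frac{|x|^2}{4t}}.
\end{align*}
Integrating from $t_1$ to $t \in [t_1,t_2]$ and pulling absolute values inside bounds $\left|J_{t_1}(u,v)/t_1 - J_t(u,v)/t\right|$ by $2\int_{t_1}^t \frac{1}{-s}(-4\pi s)^{-n/2}\int |u||\phi|\, e^{|x|^2/(4s)}\, ds$ plus the analogous expression with integrand $|u||v||\phi|^2$.

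The main step is to estimate each of these after multiplying by $\sqrt{-t_2}$. For the first, I would use the $s$-weighted absorbing inequality $|u||\phi| \leq \frac{\epsilon_1 u^2}{2\sqrt{-s}} + \frac{\sqrt{-s}|\phi|^2}{2\epsilon_1}$. By monotonicity $I_u(s) \leq I_u(t_1)$ from Lemma \ref{l:iprimelem}, the $u^2$ contribution is at most $\frac{\epsilon_1 I_u(t_1)}{\sqrt{-t_2}}$; using $-I_1'(s) = (-4\pi s)^{-n/2}\int |\phi|^2 e^{|x|^2/(4s)}$ together with $\frac{1}{\sqrt{-s}} \leq \frac{1}{\sqrt{-t_2}}$, the $|\phi|^2$ contribution is at most $\frac{|I_1(t_1) - I_1(t_2)|}{2\epsilon_1 \sqrt{-t_2}}$. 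Multiplying by $2\sqrt{-t_2}$ gives $2\epsilon_1 I_u(t_1) + \frac{|I_1(t_1) - I_1(t_2)|}{\epsilon_1}$. For the second integral, I would use $|uv||\phi|^2 \leq \frac{\epsilon_1 \sqrt{-s}}{2} u^2 |\phi|^2 + \frac{v^2 |\phi|^2}{2\epsilon_1 \sqrt{-s}}$. Lemma \ref{l:iprimelem} gives $(-4\pi s)^{-n/2}\int u^2|\phi|^2 e^{|x|^2/(4s)} \leq -I_u'(s)$, so the $u^2|\phi|^2$ piece contributes $\frac{\epsilon_1}{2} I_u(t_1)$ after scaling; the $v^2|\phi|^2$ piece becomes exactly $\frac{1}{2\epsilon_1 \sqrt{-t_2}}$ times $\int_{t_1}^t (-4\pi s)^{-n/2}\int \frac{v^2}{-s}|\phi|^2 e^{|x|^2/(4s)}\, ds$, which is at most the full integral on $[t_1, t_2]$ bounded by Lemma \ref{l:vV1} (using nonnegativity of the integrand). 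After scaling this produces the bracketed expression with coefficient $\frac{1}{2\epsilon_1}$. Summing $2\epsilon_1 + \epsilon_1/2 = \frac{5\epsilon_1}{2}$ recovers the claimed estimate.

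The main obstacle is the joint choice of exponents $(-s)^{\pm 1/2}$ in the two absorbing inequalities. These weights are dictated by the requirement that each of the four resulting time integrals be controlled in terms of $I_u(t_1)$, $|I_1(t_1) - I_1(t_2)|$, or the Lemma \ref{l:vV1} output, with exactly one net factor of $1/\sqrt{-t_2}$ to cancel the $\sqrt{-t_2}$ on the left. Once this scaling is locked in, the rest is organized bookkeeping using monotonicity of $I_u$ and $I_1$, the formula for $I'$ from Lemma \ref{l:iprimelem}, and direct application of Lemma \ref{l:vV1}.
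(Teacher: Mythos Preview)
Your proposal is correct and follows essentially the same approach as the paper's proof: compute the derivative of $J_t(u,v)/t$ via \eqr{e:eckerH} and \eqr{e:cLtV}, apply the two $s$-weighted absorbing inequalities $|u||\phi| \leq \frac{\epsilon_1 u^2}{2\sqrt{-s}} + \frac{\sqrt{-s}|\phi|^2}{2\epsilon_1}$ and $|uv||\phi|^2 \leq \frac{\epsilon_1\sqrt{-s}}{2}u^2|\phi|^2 + \frac{v^2|\phi|^2}{2\epsilon_1\sqrt{-s}}$, then integrate and invoke monotonicity of $I_u$, $I_1$ together with Lemma~\ref{l:vV1}. The only cosmetic difference is that the paper bounds $\bigl|\tfrac{d}{dt}(J_t(u,v)/t)\bigr|$ pointwise before integrating and records the intermediate term $\tfrac{\epsilon_1}{2}|I_u(t_1)-I_u(t_2)|$ (then bounds it by $\tfrac{\epsilon_1}{2}I_u(t_1)$), whereas you absorb this directly; the resulting constant $\tfrac{5}{2}\epsilon_1$ is identical.
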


	\begin{proof}
	Since $u$ and $v$ satisfy the heat equation, 
	applying \eqr{e:eckerH}    to $v \, u$   gives
	\begin{align}	\label{e:startdiff}
		\frac{d}{dt} \, J_{t} (u , v) = - \left( -4\pi \, t \right)^{ - \frac{n}{2} } \, \int_{M_t} \left\{2\, \langle \nabla u , \nabla v \rangle +  u \,v |\phi|^2  \right\}\, \e^{ \frac{ |x|^2}{4t} } \, .
	\end{align}
	Using  \eqr{e:cLtV} and the divergence theorem on the first term in \eqr{e:startdiff} gives
	\begin{align}	 
		\frac{d}{dt} \, J_{t} (u, v) &=  \left( -4\pi \, t \right)^{ - \frac{n}{2} } \, \int_{M_t} \left\{2\,   u \, \cL_t \, v   -  u\,v |\phi|^2  \right\}\, \e^{ \frac{ |x|^2}{4t} } \notag \\
		   &= \frac{1}{t} \, J_{t} (u , v) - \left( -4\pi \, t \right)^{ - \frac{n}{2} } \, \int_{M_t} \left\{2\,   u \, \langle \phi , V \rangle   +  u \,v |\phi|^2  \right\}\, \e^{ \frac{ |x|^2}{4t} } 
		   \, .
	\end{align}
Using absorbing inequalities on the last integral, we get for any $\epsilon_1  > 0$ that
	\begin{align}	
		\left| \frac{d}{dt} \, \frac{J_{t} (u, v)}{t} \right| &\leq \epsilon_1 \, \frac{I_{u} (t)}{ (-t)^{3/2}} + 
		 \left( -4\pi \, t \right)^{ - \frac{n}{2} }   \, \int_{M_t} \left\{ \frac{ | \phi|^2}{\epsilon_1 \, \sqrt{-t}}   +  \left( \frac{\epsilon_1\, u^2}{2 \sqrt{-t}}   + \frac{v^2}{2\epsilon_1\, (-t)^{3/2}}  \right) |\phi|^2  \right\}\, \e^{ \frac{ |x|^2}{4t} } 
		   \, .  \notag
	\end{align}
	Integrating  in $t$, using the monotonicity of $I_{u}$ on the first term, \eqr{e:eckerH}  on the  next two terms, and Lemma \ref{l:vV1} on the last term gives for any $t \in [t_1 , t_2]$ that
	\begin{align}	 
		& \sqrt{-t_2} \, \left|   \frac{ J_{t_1} (u , v) }{t_1}  - \frac{J_{t} (u , v)}{t}\right|  \leq 2\, \epsilon_1  \,  I_{u}(t_1)  + \frac{1}{\epsilon_1} \, \left| I_1 (t_1) - I_{1} (t_2)
	\right|  + \frac{\epsilon_1}{2 } \, \left| I_{u} (t_1) - I_{u} (t_2) \right| 
	\notag \\
	&\qquad  +  \frac{1}{2\, \epsilon_1} \left\{
	\, \left|
		  \frac{ I_v(t_1)}{t_1}  -  \frac{I_v(t_2)}{t_2}
		 \right|+ C_n \, \sqrt{ \lambda_0}  \,\left( \frac{t_1}{t_2} \right)^{ \frac{1}{2}}  \,  \left| I_1 (t_1) - I_{1} (t_2)
	\right|^{ \frac{1}{2} }	\right\}   \, .  \notag
	\end{align}
  \end{proof}

\subsection{Poincar\'e inequalities}

The first eigenvalue on a cylinder is $\frac{1}{2}$, with the eigenspace spanned by the $(n+1)$ coordinate functions, and the next eigenvalue is $1$.  The next lemma gives corresponding Poincar\'e inequalities
for submanifolds    close to a cylinder on a fixed large set.  This requires that   the function satisfies a  ``localization inequality'' (cf. Lemma \ref{l:localizeMt}):
\begin{align}	\label{e:concinq}
	 \left( -4\pi t \right)^{- \frac{n}{2} } \, \int_{\Gamma}\left(  \frac{|x|^2}{-t} \, u^2 -t\, |\nabla u|^2 \right)  \, \e^{ \frac{|x|^2}{4t}} \leq C_0 \, \left( -4\pi t \right)^{- \frac{n}{2} } \, \int_{\Gamma} u^2 \, \e^{ \frac{|x|^2}{4t}} < \infty \, . 
\end{align}
In the next lemma, $\Gamma^n \subset \RR^N$ is a submanifold with $\lambda (\Gamma) \leq \lambda_0 < \infty$ and $B_{R_{\mu}} \cap \frac{\Gamma}{\sqrt{-t}}$ is a $C^1$ graph over a cylinder with norm at most $\epsilon_{\mu}$ and  $t< 0$ is a constant.

\begin{Lem}	\label{l:poin1}
Given $C_0$ and $\mu > 0$, there exists $\epsilon_{\mu} > 0$ and $R_{\mu} >0$ so that if 
  $u$ is a $W^{1,2}$ function satisfying $\int_{\Gamma} u \, \e^{ \frac{|x|^2}{4t}} = 0$ and \eqr{e:concinq}, 
then  
\begin{align}	\label{e:PoA}
	(1-\mu) \, \left( -4\pi t \right)^{- \frac{n}{2} } \, \int_{\Gamma} u^2 \, \e^{ \frac{|x|^2}{4t}} \leq -2\, t\, \left( -4\pi t \right)^{- \frac{n}{2} } \,  \int_{\Gamma} |\nabla u|^2 \, \e^{ \frac{|x|^2}{4t}} \, .
\end{align}

If in addition $\int_{\Gamma} u \, x_i \, \e^{ \frac{|x|^2}{4t}} = 0$ for  the coordinates $x_1  , \dots , x_{n+1}$   on the cylinder, then
\begin{align}	\label{e:PoB}
	(1-\mu) \, \left( -4\pi t \right)^{- \frac{n}{2} } \, \int_{\Gamma} u^2 \, \e^{ \frac{|x|^2}{4t}} \leq  -t \, \left( -4\pi t \right)^{- \frac{n}{2} } \, \int_{\Gamma} |\nabla u|^2 \, \e^{ \frac{|x|^2}{4t}} \, .
\end{align}
\end{Lem}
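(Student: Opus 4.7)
The plan is to rescale to unit time and then argue by a compactness/contradiction reduction to the spectral gap of $\cL$ on the exact cylinder $\cC = \SS^k_{\sqrt{2k}} \times \RR^{n-k}$. Set $\tilde\Gamma = \Gamma/\sqrt{-t}$ and $\tilde u(y) = u(\sqrt{-t}\,y)$. A direct change of variables $x = \sqrt{-t}\,y$ cancels the $t$-factors and turns \eqref{e:PoA} and \eqref{e:PoB} into the standard Gaussian Poincar\'e inequalities
\begin{align*}
(1-\mu) \int_{\tilde\Gamma} \tilde u^2 \, \e^{-f} &\leq 2 \int_{\tilde\Gamma} |\nabla \tilde u|^2 \, \e^{-f} \, , \\
(1-\mu) \int_{\tilde\Gamma} \tilde u^2 \, \e^{-f} &\leq \int_{\tilde\Gamma} |\nabla \tilde u|^2 \, \e^{-f} \, ,
\end{align*}
with the same orthogonality hypotheses against $1$ and, for the second, against the cylinder coordinates $x_1, \dots, x_{n+1}$. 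Under the same rescaling, \eqref{e:concinq} becomes $\int_{\tilde\Gamma}(|y|^2\,\tilde u^2 + |\nabla \tilde u|^2)\,\e^{-f} \leq C_0 \int_{\tilde\Gamma} \tilde u^2 \,\e^{-f}$, which yields both the Chebyshev tail bound $\int_{\tilde\Gamma \setminus B_R} \tilde u^2 \, \e^{-f} \leq (C_0/R^2) \int_{\tilde\Gamma} \tilde u^2 \,\e^{-f}$ and a uniform $W^{1,2}$ bound.

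Suppose \eqref{e:PoA} fails for some $\mu > 0$. Then there exist sequences $\epsilon_i \to 0$, $R_i \to \infty$, shrinkers $\tilde\Gamma_i$ that are $(\epsilon_i, R_i, C^1)$-close to $\cC$, and normalized $\tilde u_i$ with $\int \tilde u_i^2\,\e^{-f} = 1$, $\int \tilde u_i\,\e^{-f} = 0$, and $\int |\nabla \tilde u_i|^2\,\e^{-f} \leq (1-\mu)/2$, all satisfying the rescaled localization bound with constant $C_0$. The graphical parametrization identifies $B_{R_i} \cap \tilde\Gamma_i$ with $B_{R_i} \cap \cC$ via a diffeomorphism converging to the identity in $C^1$, so pulling back $\tilde u_i$ (and cutting off near $\partial B_{R_i}$ with error controlled by the tail estimate) produces a sequence $v_i$ on $\cC$ whose Gaussian $L^2$ and $W^{1,2}$ norms differ from those of $\tilde u_i$ by a multiplicative factor tending to $1$. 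The uniform $W^{1,2}$ bound together with the tail estimate gives compactness in the Gaussian $L^2$ norm on $\cC$ (local Rellich plus uniform control at infinity), so after passing to a subsequence $v_i \to v_\infty$ strongly in $L^2(\cC, \e^{-f})$ and weakly in $W^{1,2}$, with $\int v_\infty^2\,\e^{-f} = 1$, $\int v_\infty\,\e^{-f} = 0$, and $\int |\nabla v_\infty|^2\,\e^{-f} \leq (1-\mu)/2$ by lower semicontinuity.

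But on $\cC$ the first nonzero eigenvalue of $\cL$ is $\tfrac12$, with eigenspace spanned by the $n+1$ coordinate functions (as recalled in the proof of Corollary \ref{l:spechyp2}), so $v_\infty \perp 1$ gives $\int v_\infty^2\,\e^{-f} \leq 2 \int |\nabla v_\infty|^2\,\e^{-f} \leq 1-\mu$, contradicting $\int v_\infty^2\,\e^{-f} = 1$. The argument for \eqref{e:PoB} is identical except that we also have $\int v_\infty\, x_j\,\e^{-f} = 0$ for $j = 1, \dots, n+1$, so $v_\infty$ is orthogonal to both the $0$- and the $\tfrac12$-eigenspaces; since the next eigenvalue is $\geq 1$, we get $\int v_\infty^2\,\e^{-f} \leq \int |\nabla v_\infty|^2\,\e^{-f} \leq 1-\mu$, again a contradiction.

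The main obstacle is extracting a limit $v_\infty$ with the correct $L^2$ mass, i.e., preventing mass from escaping to spatial infinity on the noncompact cylinder; this is precisely what the localization hypothesis \eqref{e:concinq} is designed to rule out via the uniform tail estimate above. A secondary technical point is that the orthogonality against the unbounded test functions $x_j$ must transfer to the limit, which follows from strong $L^2$ convergence together with the uniform second-moment bound $\int |y|^2 v_i^2\,\e^{-f} \leq C_0$ and Cauchy--Schwarz.
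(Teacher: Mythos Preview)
Your argument is correct and uses the same underlying ingredients as the paper --- the rescaling to $t=-1$, the localization hypothesis \eqref{e:concinq} to control the tail, the graphical transplant to the cylinder, and the spectral gap $\mu_1=\tfrac12$, $\mu_2=1$ on $\cC$ --- but packages them differently. The paper gives a direct, quantitative proof: it picks a cutoff radius $R\in\{L,\dots,2L-1\}$ by pigeonhole so that the gradient energy in the transition annulus is at most $L^{-1}$ of the total, transplants the truncated function $\psi u$ to the cylinder, applies the exact Poincar\'e inequality there, and then tracks every error term explicitly (the tail $C_0/R^2$, the metric distortion $\eta(R,\epsilon_\mu)$, the cutoff gradient). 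Your compactness/contradiction route sidesteps this bookkeeping: you only need the errors to vanish along a sequence, so the pigeonhole step for the gradient annulus is unnecessary (the cutoff energy $\int|\nabla\psi|^2\tilde u_i^2\,\e^{-f}$ is already $O(R_i^{-2})$ by the tail bound, and $\psi^2|\nabla\tilde u_i|^2\leq|\nabla\tilde u_i|^2$ suffices since you invoke lower semicontinuity rather than two-sided comparison). The trade-off is that the paper's proof in principle yields an explicit dependence of $\epsilon_\mu,R_\mu$ on $\mu$ and $C_0$, while yours does not; for the applications in Lemmas~\ref{l:atleastlinear} and~\ref{l:atleastquad} this makes no difference. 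Your handling of the two genuine subtleties --- no loss of $L^2$ mass at infinity, and passage to the limit of the orthogonality against the unbounded $x_j$ via the second-moment bound --- is correct and is exactly where \eqref{e:concinq} earns its keep.
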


\begin{proof}
Since the statement is scale-invariant, we can assume that $t=-1$.  To shorten notation, let
$\fint$ denote the Gaussian  integral
\begin{align}
	\fint  w \equiv  \left( 4\pi \right)^{ - \frac{n}{2} } \, \int w \, \e^{-f} \, .
\end{align}
Let $L$ be a  large integer to be chosen and   choose   $R \in \{L, L+1, \dots , 2L-1 \}$ with
\begin{align}	 
	  \fint_{B_{R+1} \cap \Gamma \setminus B_R} \, |\nabla u|^2   \leq 
	  \frac{ 1}{L}    \, \fint_{B_{2L} \cap \Gamma \setminus B_L} |\nabla u|^2  \leq
	  	 \frac{ 1}{L}   \, \fint_{\Gamma} |\nabla u|^2 \, .   
\end{align}
Combining this with the localization inequality \eqr{e:concinq} gives
\begin{align}	\label{e:Po1}
	 \fint_{B_{R+1} \cap \Gamma \setminus B_R} \, |\nabla u|^2   \leq \frac{C_0}{L} \,     \fint_{\Gamma} u^2\, .
\end{align}
Let $\psi$ be a linear cutoff function from $R$ to $R+1$ and define $w = \psi \, u$.  The localization inequality \eqr{e:concinq} gives
\begin{align}	\label{e:Po2}
	 \fint_{\Gamma}   |u - w|^2   \leq    \fint_{\Gamma \setminus B_{R}}   u^2    \leq \frac{C_0}{R^2} \,   \fint_{\Gamma} u^2   \, .
\end{align}
As long as $B_{2R} \cap \Gamma$ is a small $C^1$ graph over the cylinder $\Sigma$, we can transplant the function $w$ to a function $\bar{w}$ on $\Sigma$ which is supported inside $B_{2R}$.  Moreover, the distortion of the measure and the gradient are as small as we want if we make $\epsilon_{\mu}$ small enough.  In particular, there is a continuous function $\eta (R , \epsilon_{\mu})$ with $\eta (R , 0) =0$ so that
\begin{align}
	\left| \fint_{\Gamma} w^2  -  \fint_{\Sigma} \bar{w}^2   \right| &\leq \eta \,  \fint_{\Gamma} w^2   \, ,  \label{e:Po4} \\
	\left|   \fint_{\Gamma} |\nabla w|^2  -    \fint_{\Sigma} |\nabla \bar{w}|^2  \right| &\leq \eta \,    \fint_{\Gamma} |\nabla w| ^2   \, , \label{e:Po5} \\
	\left|   \fint_{\Gamma} w   -   \fint_{\Sigma} \bar{w}  \right|^2 &\leq \eta \,   \fint_{\Gamma} w^2  
	\, .  \label{e:Po6}
\end{align}
The first non-zero eigenvalue of the cylinder $\Sigma$ is $\frac{1}{2}$, so $\bar{v}$ satisfies
\begin{align}
	  \fint_{\Sigma} \bar{w}^2  &\leq  \frac{ \left(\fint_{\Sigma} \bar{w}  \right)^2}{\lambda (\Sigma)  } + 2 \, \fint_{\Sigma} |\nabla \bar{w} |^2  
	  \leq    \left( \fint_{\Sigma} \bar{w} \right)^2 + 2 \, \fint_{\Sigma} |\nabla \bar{w} |^2  
	\, .
\end{align}
Combining this with  \eqr{e:Po4}, \eqr{e:Po5}, \eqr{e:Po6} and the squared triangle inequality gives
\begin{align}
	(1-\eta) \,  \fint_{\Gamma} w^2  &\leq    \fint_{\Sigma} \bar{w}^2 \leq  2 \,    \fint_{\Sigma} |\nabla \bar{w} |^2   +   \left(   \fint_{\Sigma} \bar{w} \right)^2  \notag \\
	&\leq 2 (1+\eta) \,  \fint_{\Gamma} |\nabla w|^2    + 2  \, \left(\eta \,    \fint_{\Gamma} w^2  + \left[   \fint_{\Gamma} w  \right]^2  \right)    \, .
\end{align}
Absorbing the middle term, using $\fint_{\Gamma} u  = 0$, \eqr{e:Po2}  and the Cauchy-Schwarz inequality gives
\begin{align}
	 (1-3\, \eta) \,   \fint_{\Gamma} w^2   & \leq 2 (1+\eta) \,  \fint_{\Gamma} |\nabla w|^2 +  2\,   \left[ \fint_{\Gamma} (u-w) \right]^2    \notag \\
	 &\leq 2 (1+\eta) \fint_{\Gamma} |\nabla w|^2  +   \frac{C }{R^2}\,  \fint_{\Gamma} u^2   \, .  
\end{align}
Using  \eqr{e:Po2}  to bound  $\| u \|_{L^2}$ by $\| w \|_{L^2}$  and   splitting   $\fint_{\Gamma}|\nabla w|^2 $  into   inner and outer parts  
\begin{align}
	\left(1- \frac{C_0}{R^2} \right) & (1-3\eta)  \fint_{\Gamma} u^2   \leq (1-3\eta) \,    \fint_{\Gamma} w^2   \notag \\
	& \leq 2 (1+\eta) \, \fint_{B_R \cap \Gamma} |\nabla w|^2  + 
	   \frac{C}{R^2} \,  \fint_{\Gamma} u^2   + 2(1+\eta) \, \fint_{\Gamma \setminus B_R} |\nabla w|^2    \, . 
\end{align}
  The first term is bounded by $2 (1+\eta) \, \fint_{  \Gamma} |\nabla u|^2 $ since $u=w$ on $B_R \cap \Gamma$.  The second can be absorbed on the left.  
 For the last, we use \eqr{e:Po1} and  \eqr{e:Po2}  to get
\begin{align}
	\fint_{\Gamma \setminus B_R} |\nabla w|^2  \leq 2 \,\fint_{\Gamma \setminus B_R}(u^2 +  |\nabla u|^2)   \leq 2 \left( \frac{C_0}{R^2} + \frac{C_0}{L} \right) \, \fint_{\Gamma} u^2     \, .   
\end{align}
Choosing $L$ large and then taking $\eta$ small enough, this gives \eqr{e:PoA}.

Suppose   in addition that  $\fint_{\Gamma} u \, x_i  = 0$ for  the coordinates $x_1  , \dots , x_{n+1}$   on $\Sigma$. These $x_i$'s are a   basis for the first non-zero eigenspace of $\Sigma$ and the next
eigenvalue is $1$.  Thus,   if $\zeta$ is a function on $\Sigma$ that integrates to zero against $1$ and 
 $x_1, \dots , x_{n+1}$, then $\fint_{\Sigma} \zeta^2   \leq \fint_{\Sigma} |\nabla \zeta|^2$.  For each of these $x_i$'s,  using that the support of $w$ is a graph over $\Sigma$   gives
 \begin{align}
 	\left( \fint_{\Gamma} w \, x_i   -\fint_{\Sigma} \bar{w} \, x_i   \right)^2 \leq \eta \, \fint_{\Gamma} w^2 \, .
 \end{align}
 We can now argue as above to get \eqr{e:PoB}.
\end{proof}

   \subsection{Lower bounds for growth}
   
   The next lemma shows that any caloric function that has integral zero on a slice must grow at least linearly if the flow is close to cylindrical.
   
   \begin{Lem}		\label{l:atleastlinear}
   Given $\mu \in (0, 1/2)$, there exist $\epsilon_{\mu} > 0$ and $R_{\mu} > 0$ so that if 
   \begin{itemize}
   \item $B_{R_{\mu}} \cap  \frac{ M_t }{\sqrt{-t}} $ is an $\epsilon_{\mu}$ $C^1$-graph over a cylinder for $t_1 \leq t \leq t_2 < 0$,
   \item $u_t = \Delta u$, $I_u (t_1)= 1$, and $J_{t_1} (u,1)= 0$,
   \end{itemize}
   then $I_{u} (t_2) \leq \left( \frac{t_1}{t_2} \right)^{\mu-1} + 2 \,  \left| I_1 (t_1) - I_1(t_2) \right|  $.
   \end{Lem}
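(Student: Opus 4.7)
The plan is to translate the Poincaré inequality \eqr{e:PoA} of Lemma \ref{l:poin1} (with first eigenvalue $\tfrac{1}{2}$ on the cylinder) into a differential inequality for $I_u(t)$ and then integrate. Set $\mathcal E := |I_1(t_1) - I_1(t_2)|$. Since $I_u(t_1) = 1$ and $J_{t_1}(u,1) = 0$, Lemma \ref{l:diffcont} gives $|J_t(u,1)|^2 \leq \mathcal E$ for all $t \in [t_1, t_2]$. The Gaussian area $F$ of each cylinder $\SS^k_{\sqrt{2k}} \times \RR^{n-k}$ exceeds $1$, so by choosing $\epsilon_\mu$ small and $R_\mu$ large I can force $I_1(t) \geq \tfrac{1}{2}$ on $[t_1, t_2]$. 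Letting $c(t) := J_t(u,1)/I_1(t)$ and $\bar u := u - c(t)$, the function $\bar u$ has zero Gaussian mean at time $t$ and satisfies $c(t)^2 I_1(t) = J_t(u,1)^2/I_1(t) \leq 2\mathcal E$.

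Applying \eqr{e:PoA} to $\bar u$ on $M_t$, together with $\nabla \bar u = \nabla u$ and $I_{\bar u}(t) = I_u(t) - c(t)^2 I_1(t) \geq I_u(t) - 2\mathcal E$, gives
\[
(1-\mu)\bigl[I_u(t) - 2\mathcal E\bigr] \leq -2t\,I_{\nabla u}(t).
\]
Lemma \ref{l:iprimelem} yields $2 I_{\nabla u}(t) \leq -I'_u(t)$, so $-2t\,I_{\nabla u}(t) \leq t\,I'_u(t)$ and hence
\[
(1-\mu)\bigl[I_u(t) - 2\mathcal E\bigr] \leq t\,I'_u(t).
\]
Switching to $s = -t > 0$ and $\tilde I(s) = I_u(-s)$, this reads $s\tilde I'(s) \geq (1-\mu)[\tilde I(s) - 2\mathcal E]$, i.e., $\phi(s) := [\tilde I(s) - 2\mathcal E]\, s^{\mu-1}$ is nondecreasing. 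Monotonicity $\phi(-t_2) \leq \phi(-t_1)$, combined with $\tilde I(-t_1) = 1$ and $(t_1/t_2)^{\mu-1} \leq 1$, gives
\[
I_u(t_2) - 2\mathcal E \leq (1 - 2\mathcal E)(t_1/t_2)^{\mu-1} \leq (t_1/t_2)^{\mu-1},
\]
which rearranges to the claimed bound.

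The main obstacle is the Poincaré step: verifying the hypotheses of Lemma \ref{l:poin1} at every slice $M_t$. The cylindrical graph hypothesis is built into the assumption of the present lemma, but the localization inequality \eqr{e:concinq} for $\bar u$ on $M_t$ still needs justification. Since $I_u$ is monotone nonincreasing, $I_u(t) \leq 1$ on $[t_1, t_2]$, and $c(t)$ is uniformly bounded; combining these with Lemma \ref{l:localizeMt} supplies the required control on $\int |x|^2 \bar u^2 e^{|x|^2/(4t)}$ and $\int |\nabla \bar u|^2 e^{|x|^2/(4t)}$. The lower bound $I_1(t) \geq \tfrac{1}{2}$ that powers the clean constant ``$2\mathcal E$'' in the conclusion is the other ingredient requiring the smallness of $\epsilon_\mu$ and largeness of $R_\mu$; it follows because, on the compact set $B_{R_\mu}$, the $C^1$-closeness makes $I_1(t)$ arbitrarily close to $F(\text{cylinder}) > 1$, while tail contributions outside $B_{R_\mu}$ are negligible by the monotonicity of $I_1$ and the entropy bound.
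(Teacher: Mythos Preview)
Your overall strategy matches the paper's: subtract the running mean to get a mean-zero function $\bar u$, invoke the Poincar\'e inequality \eqr{e:PoA} from Lemma~\ref{l:poin1}, convert to the differential inequality $(1-\mu)\bigl[I_u(t)-2\mathcal E\bigr]\le t\,I_u'(t)$, and integrate. The integration step and the handling of $J_t(u,1)$ via Lemma~\ref{l:diffcont} are fine.

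The gap is in your verification of the localization hypothesis \eqr{e:concinq} for $\bar u$. That hypothesis requires a \emph{fixed} constant $C_0$ bounding both $\tfrac{1}{-t}\int |x|^2\bar u^2\,\e^{|x|^2/(4t)}$ and $-t\int|\nabla\bar u|^2\,\e^{|x|^2/(4t)}$ by $C_0\, I_{\bar u}(t)$; the thresholds $\epsilon_\mu,R_\mu$ in Lemma~\ref{l:poin1} depend on $C_0$, so it must be uniform in $t$. You appeal to Lemma~\ref{l:localizeMt}, but that lemma only bounds the weighted mass $\int|x|^2 u^2/(-t)$ in terms of $I_u$ \emph{plus} the energy $-t\int|\nabla u|^2$; it does not bound the energy itself. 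Knowing $I_u(t)\le 1$ gives no control on the ratio $-t\,I_{|\nabla u|}(t)/I_{\bar u}(t)$, and there is no a priori reason this ratio is uniformly bounded for a general caloric function.

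The paper closes this with a case split at each $t$: if $t\,I_u'(t)>I_u(t)$ then your target inequality $(1-\mu)\bigl[I_u(t)-2\mathcal E\bigr]\le t\,I_u'(t)$ holds automatically; otherwise $t\,I_u'(t)\le I_u(t)$, which by \eqr{e:Ipri} is exactly the energy bound $-2t\,I_{|\nabla u|}(t)-t\,I_{u|\phi|}(t)\le I_u(t)$, and combined with Lemma~\ref{l:localizeMt} this yields \eqr{e:concinq} for $\bar u$ with a $C_0$ depending only on $n$. With this split in place your argument goes through unchanged.
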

   
   \begin{proof}
   Set $\kappa =  \left| I_1 (t_1) - I_{1} (t_2)
	\right|$.  We are done if $I_{u}(t_2) \leq 2\, \kappa$, so we    can assume 
	\begin{align}	\label{e:kappaIu}
		2\, \kappa< I_{u}(t_2) \leq I_u(t) {\text{ for all }} t \in [t_1 , t_2] \, .
	\end{align}
   When \eqr{e:kappaIu} holds, we will prove that $I_u$ satisfies the differential inequality
\begin{align}	\label{e:theguythatholds}
	\left( (-t)^{\mu-1} \, I_u \right)' = (-t)^{\mu-1} \,  \left(  I_u' + (\mu-1) \frac{I_u}{t} \right)  \leq    \kappa \, (-t)^{\mu - 2 }  \, .
\end{align}
Once we have   \eqr{e:theguythatholds}, then integrating from $t_1$ to $t_2$ gives
\begin{align}
	    (-t_2)^{\mu - 1} \, I_u(t_2) - (-t_1)^{\mu-1} \, I_u (t_1) \leq   \kappa \, \int_{t_1}^{t_2} (-t)^{\mu -2} \, dt \leq \frac{ \kappa  \,  \, (-t_2)^{ \mu - 1}  }{1-\mu} \, .
\end{align}
The lemma follows from this since $I_u (t_1) = 1$ and $\mu \in (0,1/2)$.
 
  It remains to prove \eqr{e:theguythatholds}.    If $I_u'(t) <   \frac{I_u}{t}$, then  \eqr{e:theguythatholds} holds.  Hence, suppose  that  
 \begin{align}	\label{e:badt}
 	I_u'(t) \geq   \frac{I_u}{t} \, .
 \end{align}
In this case $t\, I_u'  (t)\leq I_u (t)$ and  \eqr{e:Ipri} gives
    \begin{align}	 	\label{e:instead}
 	-t\, \left( -4\pi \, t \right)^{ - \frac{n}{2} } \, \int_{M_t} \left\{ 2 \, |\nabla u|^2 + u^2 \, |\phi |^2 \right\}  \, \e^{ \frac{|x|^2}{4t} } = t \, I'_u(t)  \leq I_u(t)  \, .
 \end{align}
 Combining \eqr{e:instead} with   Lemma \ref{l:localizeMt}, 
gives  
 \begin{align}	\label{e:locali2}
 	 \frac{\left( -4\pi \, t \right)^{ - \frac{n}{2} }}{-t} \int_{M_t} |x|^2 \, u^2 \,  \e^{   \frac{|x|^2}{4t} } & \leq 4n \, I_u(t) - 4t \, \left( -4\pi \, t \right)^{ - \frac{n}{2} } \int_{M_t}  \left( 4 \, |\nabla u|^2 + u^2 \, |\phi|^2 \right)  \e^{   \frac{|x|^2}{4t} } \notag \\
	 &\leq (4n+8) \, I_u(t) \, .
 \end{align}
  For each $t \in [t_1 , t_2]$, Lemma \ref{l:diffcont}
gives that
\begin{align}	 \label{e:fromdiffcont}
  	\left|  J_{t} (u ,1 ) \right|^2 & \leq  \kappa      \, .
  \end{align}
 The function $v = u - \frac{J_t (u,1)}{I_1 (t)}$ integrates to zero on $M_t$ and, using \eqr{e:fromdiffcont}
 and \eqr{e:kappaIu}, 
 \begin{align}
 	I_u(t) &= I_v (t) + \frac{J_t^2 (u,1)}{I_1 (t)} \leq I_v (t) + \kappa <   I_v(t) + \frac{1}{2} \, I_u (t)  \, .
\end{align}  
 From this, we conclude that
 \begin{align}	\label{e:IuIvt}
 	I_u (t) \leq 2 \, I_v (t) \, .
 \end{align}
We will show that $v$ satisfies the localization inequality \eqr{e:concinq}.  The energy bound in \eqr{e:concinq} follows from \eqr{e:instead} and \eqr{e:IuIvt} since $|\nabla v|^2 = |\nabla u|^2$.
  The squared triangle inequality for $v$,  \eqr{e:locali2}, and using the entropy bound on $M_t$ to bound $\left( -4\pi \, t \right)^{ - \frac{n}{2} } \int_{M_t} \frac{|x|^2}{-t}  \,  
  \e^{   \frac{|x|^2}{4t}}$ give
\begin{align}
	 \frac{\left( -4\pi \, t \right)^{ - \frac{n}{2} }}{-t} \int_{M_t} |x|^2 \, v^2 \,  \e^{   \frac{|x|^2}{4t} } & \leq 2\,  \frac{\left( -4\pi \, t \right)^{ - \frac{n}{2} }}{-t} \int_{M_t} |x|^2 \, (u^2 + \kappa) \,  \e^{   \frac{|x|^2}{4t} } \notag \\
	 &\leq  2\,(4n+8) \, I_u(t) + C \, \kappa  \, .
 \end{align}
 Using \eqr{e:kappaIu} and \eqr{e:IuIvt}, we get the  remaining estimate for  \eqr{e:concinq}.
Thus,   if $M_t /\sqrt{-t}$ is sufficiently close to cylindrical,   Lemma \ref{l:poin1}, \eqr{e:fromdiffcont}  and the equality in  \eqr{e:instead} give
\begin{align}	\label{e:diffIneqbadt}
	(1-\mu) \, I_u(t)  \leq     \frac{J^2_t (u,1)  }{I_1(t)}
	- 2\, t\,   \left( -4\pi \, t \right)^{ - \frac{n}{2} }  \int_{M_t}  |\nabla u|^2    \, \e^{ \frac{|x|^2}{4t} }  \leq  \kappa +t \, I_u' (t) 
	\, .
\end{align}
This gives \eqr{e:theguythatholds} in the remaining case \eqr{e:badt}, completing the proof.
   \end{proof}

 \subsection{Projecting orthogonally to linear functions}
 Let $x_1 , \dots , x_{n+1}$ be  the coordinates on the cylinder
  $\Sigma=\SS^k_{\sqrt{2k}} \times \RR^{n-k} \subset \RR^{n+1} \subset \RR^N$.    Given   $u$ on $M_t$, define  $\zeta = \zeta (t) \in \RR^{n+2}$ by
    \begin{align}	\label{e:hereiszeta}
    	   \zeta_0 = J_t (u,1) {\text{ and }} \zeta_i = \frac{ J_t (u , x_i)}{\sqrt{-t}} {\text{ for }} i = 1 , \dots , n+1 \, .
\end{align}
The function $\frac{x_i}{\sqrt{-t}}$ is normalized to be roughly unit size on $M_t$ and 
 constant size on a self-shrinking flow.  Let  $a=a(t) \in \RR^{n+2}$ be coefficients so that
  $v= u - a_0 - \sum_{i=1}^{n+1} a_i \frac{x_i}{\sqrt{-t}}$ is  the $J_t$-projection  of $u$ orthogonally to $\{ 1 , \frac{x_1}{\sqrt{-t}} , \dots , \frac{x_{n+1}}{\sqrt{-t}} \}$.

 If $\zeta =0$, then $a=0$ and $v=u$.  The next lemma shows that $u$ and $v$ are close if $\zeta$ is small.

\begin{Lem}	\label{l:projction}
There exist $ R_0 ,  \epsilon_0 > 0$   so that if   $B_{R_0} \cap \frac{M_t}{\sqrt{-t}}$ is an $\epsilon_0$ $C^1$-graph over $\Sigma$, 
then
\begin{align}	\label{e:uvzeta1}
	 I_u (t) &\leq  I_v (t) + |\zeta|^2 \, , \\
	 \left( -4\, \pi \, t \right)^{ - \frac{n}{2} } \, \int_{M_t} \frac{|x|^2}{-t} \, v^2 \, \e^{ \frac{|x|^2}{4t}} &\leq  C_n \, \lambda_0 \, |\zeta|^2 +  (n+3) \, 
	\left( -4\, \pi \, t \right)^{ - \frac{n}{2} } \, \int_{M_t} \frac{|x|^2}{-t} \,  u^2    \, \e^{ \frac{|x|^2}{4t}} \, , \\
	-t\, \left| I_{|\nabla u|}(t) - I_{|\nabla v|}(t) \right|  &\leq  (n+2) \, \lambda_0 \, |\zeta|^2  + 2\, |\zeta| \, \left( - \lambda_0 \, t\, I_{u\, |\phi|}(t) \right)^{ \frac{1}{2} }  \, . \label{e:lastuvzeta}
\end{align}\end{Lem}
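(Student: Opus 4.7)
The plan is to recognize $v$ as the $J_t$-orthogonal projection of $u$ onto the span of $\varphi_0 = 1$ and $\varphi_i = x_i/\sqrt{-t}$ for $i = 1, \dots, n+1$, and to translate each of the three estimates into an identity for the Gram matrix $G_{\alpha\beta} = J_t(\varphi_\alpha, \varphi_\beta)$ followed by routine bounds. With $a = (a_0, \dots, a_{n+1})$ the projection coefficients, $\zeta = Ga$. The crucial preliminary is that $\lambda_{\min}(G) \geq 1$ once $R_0$ is large and $\epsilon_0$ small. On the exact cylinder $\SS^k_{\sqrt{2k}} \times \RR^{n-k}$, rotational symmetry of the sphere factor and the zero-mean property of the Gaussian on $\RR^{n-k}$ make $G^{\rm cyl}$ diagonal with entries $F_{\rm cyl}$, $\frac{2k}{k+1} F_{\rm cyl}$ (multiplicity $k+1$), and $2 F_{\rm cyl}$ (multiplicity $n-k$), where $F_{\rm cyl} = (k/(2\pi\e))^{k/2} \Vol(\SS^k_1) > 1$ for every $k \geq 1$; so $\lambda_{\min}(G^{\rm cyl}) > 1$ with a definite gap. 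The $C^1$-closeness on $B_{R_0}$ controls the inner part of each $G_{\alpha\beta}$, while $\lambda(M_t) \leq \lambda_0$ and Gaussian decay control the tail $(-4\pi t)^{-n/2}\int_{M_t \setminus B_{R_0\sqrt{-t}}} \varphi_\alpha \varphi_\beta \, \e^{|x|^2/(4t)}$, so $\lambda_{\min}(G) \geq 1$ for $R_0$ large and $\epsilon_0$ small.

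The first inequality is immediate. Expanding $I_u = J_t(v + \sum_\alpha a_\alpha \varphi_\alpha, v + \sum_\alpha a_\alpha \varphi_\alpha)$ and using $J_t(v, \varphi_\alpha) = 0$ gives $I_u = I_v + a^T G a = I_v + \zeta^T G^{-1}\zeta \leq I_v + |\zeta|^2$. For the second, apply the $(n+3)$-term squared triangle inequality $v^2 \leq (n+3)(u^2 + a_0^2 + \sum_{i=1}^{n+1} a_i^2 \varphi_i^2)$, integrate against $|x|^2/(-t)$ in the Gaussian measure, and use the entropy bound to estimate each $(-4\pi t)^{-n/2}\int \frac{|x|^2}{-t}\varphi_\alpha^2 \, \e^{|x|^2/(4t)} \leq C_n \lambda_0$; combined with the component-wise bound $|a_\alpha|\leq|\zeta|$ coming from $\lambda_{\min}(G)\geq 1$, this yields the stated form.

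The third estimate is the most delicate. Setting $W = \nabla(u - v) = (-t)^{-1/2} \sum_{i=1}^{n+1} a_i e_i^T$, we have $|\nabla u|^2 - |\nabla v|^2 = 2\langle \nabla u, W\rangle - |W|^2$. Applying $\cL_t$-symmetry coordinate-wise and using $\cL_t x_i = x_i/(2t) - \langle \phi, e_i\rangle$ from \eqr{e:cLtV} converts the cross term into a main part $\frac{a\cdot\zeta}{-t}$ plus an error $\frac{2}{\sqrt{-t}} \sum_i a_i (-4\pi t)^{-n/2}\int u \langle \phi, e_i\rangle\, \e^{|x|^2/(4t)}$; Cauchy-Schwarz against $I_1 \leq \lambda_0$ bounds this error by $\frac{2|a|}{\sqrt{-t}} \lambda_0^{1/2} I_{u|\phi|}^{1/2}$. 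Meanwhile $|W|^2 \leq |a|^2/(-t)$, so $(-4\pi t)^{-n/2}\int |W|^2 \e^{|x|^2/(4t)} \leq |a|^2 \lambda_0/(-t)$. Multiplying by $-t$, combining, and using $|a|^2 \leq (n+2) \max_\alpha a_\alpha^2 \leq (n+2)|\zeta|^2$ yields the claimed bound.

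The main obstacle is the Gram-matrix step: one must simultaneously track all $(n+2)^2$ entries of $G$, balance the interior $C^1$-closeness against entropy-driven Gaussian tail decay, and verify that the cylinder's eigenvalues sit above $1$ with enough room to absorb the perturbation. Once this is secured, the three inequalities reduce to standard Hilbert-space projection identities and the $\cL_t$ integration by parts already used elsewhere in the paper.
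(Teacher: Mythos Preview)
Your proposal is correct and follows essentially the same route as the paper: both compute the Gram matrix $G$ of $\{1, x_1/\sqrt{-t}, \dots, x_{n+1}/\sqrt{-t}\}$ explicitly on the cylinder to get $\lambda_{\min}(G) > 1$ (equivalently $|a| \leq |\zeta|$), then derive the three inequalities via the orthogonal-projection identity $I_u = I_v + \langle a, \zeta\rangle$, the $(n+3)$-term squared triangle inequality, and the integration by parts using $\cL_t x_i = x_i/(2t) - \langle\phi, e_i\rangle$. Your final detour through $|a|^2 \leq (n+2)\max_\alpha a_\alpha^2$ is unnecessary since you already have $|a|^2 \leq |\zeta|^2$ directly from the eigenvalue bound, but this is harmless.
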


    \begin{proof}
  We will need   some calculations on $\Sigma$.  Let  $g_{ij}$ denote the matrix 
  of $(4\pi)^{ - \frac{n}{2}} \, \e^{- \frac{|x|^2}{4}}$  Gaussian  inner products of $\{ 1 , x_1 , \dots , x_{n+1} \}$ on $\Sigma$:
    \begin{align}		\label{e:cylij0}
    	\frac{g_{ij}}{\lambda (\SS^k)} &= 
	\begin{cases}
	0 &{\text{ if }} i \ne j \\
	1 &{\text{ if }} i=j=0 \\
	 \frac{2k}{k+1}  &{\text{ if }} i=j \leq k+1 \\
	2  &{\text{ if }} k+1 < i=j \leq n+1 
	\end{cases} 
    \end{align}
   By \eqr{e:cylij0}, $g_{ij}$ is invertible and the largest eigenvalue of $g_{ij}^{-1}$ is $\frac{1}{\lambda (\SS^k)} < \frac{1}{\sqrt{2}}$. Thus, for $R_0$ large and $\epsilon_0$ small,
    the matrix $\bar{g}_{ij}= \bar{g}_{ij}(t)$ of  $J_t$-inner products of $\{ 1 , \frac{x_1}{\sqrt{-t}} , \dots , \frac{x_{n+1}}{\sqrt{-t}} \}$ on $M_t$ is invertible 
    and the largest eigenvalue of $\bar{g}^{-1}_{ij}$ is $< 1$ in norm.
Thus, since $\zeta = \bar{g} (a)$, we have
\begin{align}	\label{e:azeta1}
	a = \bar{g}^{-1} (\zeta) {\text{ and }} |a|^2 = \sum_i a_i^2 \leq |\zeta|^2 \, .
\end{align}
Since $I_u (t) = I_v (t) + I_{u-v}(t)$ and $ I_{u-v}(t) = \sum_{i,j} a_i a_j \bar{g}_{ij} = \langle \zeta , a \rangle \leq |\zeta|^2$, so \eqr{e:azeta1} gives \eqr{e:uvzeta1}.

Using the Cauchy-Schwarz inequality, \eqr{e:azeta1} and the entropy bound for $M_t$ gives
\begin{align}	\label{e:uvzeta2}
	\left( -4\, \pi \, t \right)^{ - \frac{n}{2} } \, \int_{M_t} \frac{|x|^2}{-t} \, v^2 \, \e^{ \frac{|x|^2}{4t}} &\leq (n+3) \, 
	\left( -4\, \pi \, t \right)^{ - \frac{n}{2} } \, \int_{M_t} \frac{|x|^2}{-t} \, \left( u^2 + a_0^2 +  \sum \frac{a_i^2 \, x_i^2}{-t}    \right)  \, \e^{ \frac{|x|^2}{4t}}  \notag \\
	&\leq  C_n \, \lambda_0 \, |\zeta|^2 +  (n+3) \, 
	\left( -4\, \pi \, t \right)^{ - \frac{n}{2} } \, \int_{M_t} \frac{|x|^2}{-t} \,  u^2    \, \e^{ \frac{|x|^2}{4t}}     \, .
\end{align}

To compare the energy of $u$ and $v$, we first write 
\begin{align}	\label{e:nunv1}
	I_{|\nabla v|} - I_{|\nabla u|}   = I_{|\nabla (u-v)|} - 2\, \left( -4\pi \, t \right)^{ - \frac{n}{2} } \, \int_{M_t} \langle \nabla (u-v) , \nabla u \rangle \, \e^{ \frac{|x|^2}{4t} } \, .
\end{align}
We bound the first term on the right using  the Cauchy-Schwarz inequality, \eqr{e:azeta1} and the entropy bound for $M_t$
\begin{align}	\label{e:nunv1a}
	I_{|\nabla (u-v)|} \leq (n+1) \, \sum_i a_i^2 \, I_{ \frac{|\nabla x_i|}{\sqrt{-t}}} \leq   (n+1) \, \frac{\lambda_0}{-t} \, \sum_i a_i^2 \leq  (n+1) \, \frac{\lambda_0}{-t}  \, |\zeta|^2 \, .
\end{align}
Since $\cL_t \, x_i = \frac{x_i}{2t} - \langle \phi , \partial_i \rangle$ by \eqr{e:cLtV} with $V=\partial_i$,  Stokes's theorem and the definition of $\zeta$ give
\begin{align}
	  2\, \left( -4\pi \, t \right)^{ - \frac{n}{2} } \, \int_{M_t} \langle \nabla (u-v) , \nabla u \rangle \, \e^{ \frac{|x|^2}{4t} }  
	&=  -2\, J_t (  u  ,  \cL_t \,  (u-v) ) \notag \\
	&= \sum_{i=1}^{n+1} \frac{a_i \, \zeta_i}{-t} +2\sum_{i=1}^{n+1}  a_i \, J_t (  u  , \frac{\langle \phi , \partial_i \rangle }{\sqrt{-t}} ) \, .
\end{align}
Using this and \eqr{e:nunv1a}   in \eqr{e:nunv1},    the bound \eqr{e:azeta1} and the Cauchy-Schwarz inequality gives
\begin{align}		 
	-t\, \left| I_{|\nabla u|} - I_{|\nabla v|} \right|  \leq  (n+1) \, \lambda_0   \, |\zeta|^2 + 2\, |\zeta|^2  - \lambda_0 \, t\, I_{u\, |\phi|}\, .
\end{align}
\end{proof}

\subsection{Quadratic growth}
Using a  variation of Lemma  \ref{l:atleastlinear}, we will show:
    If $M_t$ is close to a cylinder $\Sigma \subset \RR^{n+1} \subset \RR^N$ and $u$ is a caloric function on $M_t$ that is orthogonal to 
      $\{ 1, \frac{x_1}{\sqrt{-t}} , \dots , \frac{x_{n+1}}{ \sqrt{-t}}\}$, then $u$ grows
      essentially quadratically.   
        The  growth comes from a Poincar\'e inequality on $\Sigma$ for functions  orthogonal to 
   $\{ 1, \frac{x_1}{\sqrt{-t}} , \dots , \frac{x_{n+1}}{ \sqrt{-t}}\}$.
    
  Let  $\kappa$   be the vector in $\RR^{n+2}$ given by  $\kappa_0 =  \left| I_1 (t_1) - I_{1} (t_2)
	\right|$ and  $\kappa_i =  \left|
		  \frac{ I_{x_i}(t_1)}{t_1}  -  \frac{I_{x_i}(t_2)}{t_2}
		 \right|$ for $i=1, \dots , n+1$.   The vector $\kappa$ vanishes when $M_t$ is self-shrinking.
   
     \begin{Lem}		\label{l:atleastquad}
   Given $\mu \in (0, 1/4)$, there exist $\epsilon_{\mu} > 0$,  $R_{\mu} > 0$ and $C_n'$ so that if 
   \begin{itemize}
   \item $B_{R_{\mu}} \cap \frac{ M_t }{\sqrt{-t}}$ is an $\epsilon_{\mu}$ $C^1$-graph over $\Sigma$ for $t_1 \leq t \leq t_2 < 0$,
   \item $u_t = \Delta \, u$, $I_u (t_1)= 1$, and $J_{t_1} (u,1)= J_{t_1}(u , x_i) =   0$ for   $i=1, \dots , (n+1)$,
   \end{itemize}
   then $I_{u} (t_2) \leq \left( \frac{t_1}{t_2} \right)^{2\,\mu-2} + C_n' \,\left( 2 + \mu^{-1} \right)  \, \lambda_0^2 \, \sqrt{|\kappa |} \, \left( \frac{t_1}{t_2} \right)^2 $.  
   \end{Lem}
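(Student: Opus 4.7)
The plan is to imitate the proof of Lemma~\ref{l:atleastlinear}, but now exploit that $u$ is $J_{t_1}$-orthogonal to the whole first eigenspace of the cylinder (the constants plus the linear functions $x_1,\dots,x_{n+1}$), so we may invoke the stronger Poincar\'e inequality \eqref{e:PoB} in Lemma~\ref{l:poin1}, whose effective eigenvalue is $1$ rather than $\frac{1}{2}$. The target is the differential inequality
\begin{equation*}
\bigl((-t)^{2\mu-2}\, I_u(t)\bigr)' \;\leq\; C_n'\,\lambda_0^2\,\sqrt{|\kappa|}\,(-t)^{2\mu-3},
\end{equation*}
whose integration from $t_1$ to $t_2$ (using $I_u(t_1)=1$ and $2\mu-2 \in (-2,-3/2)$) yields the claimed bound, with the $(t_1/t_2)^{2\mu-2}$ coming from the homogeneous part and the $(t_1/t_2)^2$ factor from $\int_{t_1}^{t_2}(-t)^{2\mu-3}\,dt$.

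First, I would show that the vector $\zeta(t)\in\RR^{n+2}$ of \eqref{e:hereiszeta} stays small for $t\in[t_1,t_2]$. Since $\zeta(t_1)=0$, Lemma~\ref{l:diffcont} directly gives $|\zeta_0(t)|^2\leq \kappa_0\leq |\kappa|$, and Lemma~\ref{l:vV2} applied with $V=e_i$ gives, after optimizing $\epsilon_1$, a bound $|\zeta_i(t)|^2 \lesssim \lambda_0\,\sqrt{|\kappa|}$ for $i=1,\dots,n+1$. Summing, $|\zeta(t)|^2 \lesssim \lambda_0\,\sqrt{|\kappa|}$ on $[t_1,t_2]$. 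Then Lemma~\ref{l:projction}, applied to the orthogonal projection $v = u - a_0 -\sum_i a_i\,x_i/\sqrt{-t}$, compares $I_u$ and $I_v$, compares $I_{|\nabla u|}$ and $I_{|\nabla v|}$, and transfers the localization inequality \eqref{e:concinq} from $u$ to $v$, in each case up to errors controlled by $|\zeta|^2$ and by $|\kappa|$.

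Next I would run the dichotomy of Lemma~\ref{l:atleastlinear}. If $I_u'(t) \leq (2-2\mu)\,I_u(t)/t$, the desired differential inequality is trivial. In the remaining ``bad'' case $I_u'(t) \geq (2-2\mu)\,I_u(t)/t$, identity \eqref{e:Ipri} bounds the energy $I_{|\nabla u|}(t)$ and the $|\phi|$-mass $I_{u|\phi|}(t)$ by $I_u(t)/(-t)$; via Lemma~\ref{l:projction} this verifies \eqref{e:concinq} for $v$ (with $C_0$ depending only on $n$). Applying \eqref{e:PoB} to $v$ then gives $(1-\mu)\,I_v(t) \leq -t\,I_{|\nabla v|}(t)$, and converting back to $u$ through Lemma~\ref{l:projction} (absorbing the $|\zeta|^2$ and $\lambda_0\sqrt{|\kappa|}$ errors) produces $(2-2\mu)\,I_u(t)\leq t\,I_u'(t)+ C\,\lambda_0^2\,\sqrt{|\kappa|}$, which is exactly the required differential inequality after multiplication by $(-t)^{2\mu-3}$.

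The main obstacle is the simultaneous bookkeeping in the first step and the dichotomy: Lemma~\ref{l:vV2} carries four error terms of different orders in $\epsilon_1$, $\kappa$, and $\lambda_0$, and the subsequent Poincar\'e/projection step also contributes $|\zeta|^2$ and $\lambda_0\sqrt{|\kappa|}$ errors. Balancing these so that the final constant becomes $C_n'(2+\mu^{-1})\,\lambda_0^2$ with an $\sqrt{|\kappa|}$ (not just $|\kappa|$) error term, and handling the $\mu^{-1}$ blowup coming from the integration of $(-t)^{2\mu-3}$, requires tracking the dependencies carefully. A clean way to avoid circularity is to note that all bounds above depend only on $I_u(t_1)=1$ and monotonicity of $I_u$, so that the control of $|\zeta(t)|$ and the differential inequality for $I_u$ can be established in parallel without a genuine bootstrap.
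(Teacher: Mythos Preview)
Your approach is the paper's, with the same ingredients (Lemmas~\ref{l:diffcont}, \ref{l:vV2}, \ref{l:projction}, \ref{l:poin1}) in the same order, but one step is not quite right. You assert that in the ``bad'' case Lemma~\ref{l:projction} verifies \eqref{e:concinq} for $v$ with $C_0$ depending only on $n$. That inequality, however, has $I_v(t)$ on its right-hand side, not $I_u(t)$; Lemma~\ref{l:projction} only bounds the left side by $I_u(t)$ plus $|\zeta|^2$-errors, and $I_v \geq I_u - |\zeta|^2$ can be much smaller than $I_u$ when $I_u(t)$ is itself of order $|\zeta|^2$. The paper handles this by first disposing of the case $I_u(t_2) \leq C_n'\lambda_0\,\omega$ (where $\omega=\lambda_0\sqrt{|\kappa|}\,(t_1/t_2)^2$ is an upper bound for $|\zeta|^2$); on what remains, monotonicity gives $I_u(t) > 2|\zeta|^2$, hence $I_u \leq 2 I_v$, and only then does the localization for $v$ go through with a uniform $C_0$.

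There is also a bookkeeping slip: your bound on $\zeta_i$ drops a factor of $(t_1/t_2)$. Lemma~\ref{l:vV2} controls $\sqrt{-t_2}\,|J_t(u,x_i)/t|$, while $\zeta_i = \sqrt{-t}\, J_t(u,x_i)/t$, so converting picks up $\sqrt{-t/(-t_2)} \leq \sqrt{t_1/t_2}$; together with the $(t_1/t_2)^{1/2}$ already present in Lemma~\ref{l:vV2} (after taking $\epsilon_1=|\kappa|^{1/4}$) this yields $|\zeta|^2 \leq C_n\lambda_0\,(t_1/t_2)^2\sqrt{|\kappa|}=C_n\,\omega$. Thus the $(t_1/t_2)^2$ in the conclusion enters through $|\zeta|^2$, not through $\int_{t_1}^{t_2}(-t)^{2\mu-3}\,dt$, which contributes only a bounded factor $(2-2\mu)^{-1}$. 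The $\mu^{-1}$ you anticipate does appear, but at a different step: it comes from the absorbing inequality used to split the cross term $\sqrt{\omega}\,(\lambda_0 I_u)^{1/2}$ arising from \eqref{e:lastuvzeta} when converting $(1-\mu/2)\,I_v \leq -t\,I_{|\nabla v|}$ back to $u$.
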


\begin{proof}
Let $C_n'$ be a large constant to be chosen, depending just on $n$.  Set  
$\omega \equiv   \lambda_0  \, \sqrt{|\kappa |} \, \left(  \frac{t_1}{t_2} \right)^2$.   
We are done if $I_u (t_2) \leq C_n' \, \lambda_0 \, \omega$, so we can assume that
\begin{align}	\label{e:littlec}
	C_n' \, \lambda_0 \,  \omega < I_u (t_2) \leq I_u (t) {\text{ for all }} t \in [t_1 , t_2] \, .
\end{align}
Following Lemma  \ref{l:atleastlinear}, we will show that $I_u$ satisfies a differential inequality that integrates to give the lemma.
Namely, we will show that there exist $\bar{C}_n$ so that  
\begin{align}	\label{e:newdiffin}
	(2-2\mu) \,  I_u (t)  &  \leq 2\, \left( 2 + \mu^{-1} \right) \, \bar{C}_n \, \lambda_0 \, \omega +t \, I_{u}'(t)
\end{align}
This gives that
\begin{align}	\label{e:theguythatholdsT}
	\left( (-t)^{2\mu-2} \, I_u \right)' = (-t)^{2\mu-3} \,  \left\{ (2-2\mu)  \, I_u (t) -t\, I_u'    \right\}  \leq   2\, \left( 2 + \mu^{-1} \right) \, \bar{C}_n \, \lambda_0 \, \omega \, (-t)^{2\mu - 3 }  \, .
\end{align}
Integrating  \eqr{e:theguythatholdsT}  from $t_1$ to $t_2$ gives
\begin{align}
	  (-t_2)^{2\mu - 2} \, I_u(t_2) -  (-t_1)^{2\mu-2} \, I_u (t_1)  \leq   2\, \left( 2 + \mu^{-1} \right) \, \bar{C}_n \, \lambda_0 \, \omega  \, \int_{t_1}^{t_2} (-t)^{2\mu -3} \, dt \, .
\end{align}
The lemma follows from this since $I_u (t_1) = 1$ and $\mu \in (0,1/4)$.

 It remains to prove \eqr{e:newdiffin}.    If $t\, I_u'(t) >   2\,  I_u $, then  \eqr{e:newdiffin} holds.  Hence, suppose  that  
  $t\, I_u'  (t)\leq 2\, I_u (t)$ and, thus,   \eqr{e:Ipri} gives
    \begin{align}	 	\label{e:insteadT}
 	-2\, t\,  I_{|\nabla u|}(t) - t \, I_{u\, |\phi|}(t) = t \, I'_u(t)  \leq 2\, I_u(t)  \, .
 \end{align}
 Combining \eqr{e:insteadT} with   the localization inequality for $u$ in Lemma \ref{l:localizeMt}, 
gives  
 \begin{align}	\label{e:locali2T}
 	 \frac{\left( -4\pi \, t \right)^{ - \frac{n}{2} }}{-t} \int_{M_t} |x|^2 \, u^2 \,  \e^{   \frac{|x|^2}{4t} } & \leq 4n \, I_u(t) - 4t \,  \left( 4 \, I_{|\nabla u|} + I_{u\, |\phi|} \right)
	 \leq (4n+16) \, I_u(t) \, .
 \end{align}
 The Poincar\'e inequality \eqr{e:PoB} for $u$ would imply \eqr{e:newdiffin}  if   $J_t (u,1) = J_t (u, x_i) = 0$ ($u$ has
  localization  by \eqr{e:locali2T}). Since this may not be the case, let $v$ be the  $J_t$-projection  of $u$ orthogonal to 
 $\{ 1, \frac{x_1}{\sqrt{-t}} , \dots , \frac{x_{n+1}}{ \sqrt{-t}}\}$.  We will prove   localization for $v$ to get \eqr{e:PoB}  for $v$ and then deduce
\eqr{e:newdiffin}.

	 Since $ J_{t_1} (u , x_i)=0$, Lemma \ref{l:vV2} gives for any $\epsilon_1 \in (0,1/2)$ and  $t \in [t_1 , t_2]$
	\begin{align}	 		\label{e:e761}
		  \sqrt{-t_2} \, \left|   \frac{J_{t} (u , x_i)}{t}\right|  \leq \frac{5}{2} \, \epsilon_1    + \frac{\kappa_0}{\epsilon_1} 
	  +  \frac{1}{2\, \epsilon_1} \left\{
	\kappa_i+ C_n \, \sqrt{\lambda_0}  \,\left(  \frac{t_1}{t_2} \right)^{ \frac{1}{2}}  \, \kappa_0^{ \frac{1}{2} }	\right\}   \, .  
	\end{align}
	We can assume that $|\kappa| < 1/16$ since   there is nothing to prove if $|\kappa|$ is bounded away from $0$.
	Taking $\epsilon_1 = |\kappa|^{ \frac{1}{4}}$ in \eqr{e:e761} gives
	\begin{align}	\label{e:Jtuxi}
		  \sqrt{-t_2} \, \left|   \frac{J_{t} (u , x_i)}{t}\right|  \leq C_n \sqrt{\lambda_0 }  \left(  \frac{t_1}{t_2} \right)^{ \frac{1}{2}} \, |\kappa|^{ \frac{1}{4}} \, .
	\end{align}
	Since Lemma \ref{l:diffcont}
gives  
  	   $J_{t}^2 (u ,1 )    \leq  \kappa_0$, \eqr{e:Jtuxi} gives a constant $C_n$   so  $  \zeta  $  from \eqr{e:hereiszeta} satisfies
	   \begin{align}		\label{e:zetabound}
	   	|\zeta |^2 \leq C_n \, \lambda_0 \, \left(  \frac{t_1}{t_2} \right)^2 \, \sqrt{|\kappa|} \equiv C_n \, \omega \, .
	   \end{align}
	   Using this in Lemma \ref{l:projction}
 gives  $\bar{C}_n$ so that 
\begin{align}	\label{e:uvzeta1T}
	 I_u (t) &\leq  I_v (t) + \bar{C}_n \, \omega \, , \\
	 \left( -4\, \pi \, t \right)^{ - \frac{n}{2} } \, \int_{M_t} \frac{|x|^2}{-t} \, v^2 \, \e^{ \frac{|x|^2}{4t}} &\leq  \bar{C}_n \, \lambda_0 \, \omega +  (n+3) \, 
	\left( -4\, \pi \, t \right)^{ - \frac{n}{2} } \, \int_{M_t} \frac{|x|^2}{-t} \,  u^2    \, \e^{ \frac{|x|^2}{4t}} \, , \label{e:uvzeta1TS} \\
	-t\, \left| I_{|\nabla u|}(t) - I_{|\nabla v|}(t) \right|  &\leq  \bar{C}_n \, \lambda_0 \,\omega   +  \sqrt{ \bar{C}_n \,\omega} \, \left( - \lambda_0 \, t\, I_{u\, |\phi|}(t) \right)^{ \frac{1}{2} }
	  \, . \label{e:lastuvzetaT}
\end{align}
As long as we choose $C_n' > 2\, \bar{C}_n$, then \eqr{e:littlec} and  \eqr{e:uvzeta1T} guarantee  that
\begin{align}	\label{e:justbeforeTV}
	I_v (t) \leq I_u (t) \leq 2 \, I_v (t) \, .
\end{align}
Similarly, \eqr{e:littlec},   \eqr{e:uvzeta1TS} and \eqr{e:locali2T} give 
\begin{align}	\label{e:IwantmyMTV}
	 \left( -4\, \pi \, t \right)^{ - \frac{n}{2} } \, \int_{M_t} \frac{|x|^2}{-t} \, v^2 \, \e^{ \frac{|x|^2}{4t}} &\leq  I_u(t) +  (n+3) \, (4n+16) \, I_u(t) 
	\, .
\end{align}
Using \eqr{e:insteadT} in \eqr{e:lastuvzetaT} gives
\begin{align}	
	-t\, \left| I_{|\nabla u|}(t) - I_{|\nabla v|}(t) \right|  &\leq  \bar{C}_n \, \lambda_0 \,\omega   + \sqrt{ \bar{C}_n \, \omega} \, \left( 2\, \lambda_0 \, I_u (t) \right)^{ \frac{1}{2} }
	 \, . \label{e:lastuvzetaTT}
\end{align}
By \eqr{e:insteadT}, \eqr{e:justbeforeTV}, \eqr{e:IwantmyMTV}, \eqr{e:lastuvzetaTT},    $v$ satisfies the localization inequality \eqr{e:concinq}.  Therefore, we can apply  \eqr{e:PoB} in Lemma \ref{l:poin1}
to get
\begin{align}	\label{e:PoBT}
	(1-\mu/2) \, I_v(t)  \leq  -t \, I_{|\nabla v|}(t)\, .
\end{align}
Using \eqr{e:uvzeta1T} and \eqr{e:lastuvzetaTT}, \eqr{e:PoBT} implies that
\begin{align}
	  (1-\mu/2) \,  I_u (t)  &\leq  (1-\mu/2) \, I_v(t)  + \bar{C}_n \, \omega  \leq   \bar{C}_n \, \omega  -t \, I_{|\nabla v|}(t) \notag \\
	  &\leq 2\, \bar{C}_n \, \lambda_0 \, \omega     +\sqrt{ \bar{C}_n \, \omega} \,  \left( 2\, \lambda_0 \, I_u (t) \right)^{ \frac{1}{2} } -t \, I_{|\nabla u|}(t)  \, .
\end{align}
Using an absorbing inequality on the middle term gives
\begin{align}	\label{e:finallythere}
		(1-\mu) \,  I_u (t)  &\leq \left( 2 + \mu^{-1} \right) \, \bar{C}_n \, \lambda_0 \, \omega -t \, I_{|\nabla u|}(t) \leq \left( 2 + \mu^{-1} \right) \, \bar{C}_n \, \lambda_0 \, \omega + \frac{t}{2} \, I_{u}'(t) \, .
\end{align}
This gives the desired differential inequality, completing the proof.
 \end{proof}

   \subsection{Sharp bounds for codimension}
     
     The following   implies Theorem \ref{t:ancientcylinder}:
   
   \begin{Thm}	\label{t:ancientP1}
   If    a tangent flow of $M_t$ at $-\infty$  is a cylinder, then $\dim \cP_1(M_t) = n+2$.
   \end{Thm}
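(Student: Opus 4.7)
For the lower bound $\dim\cP_1(M_t)\ge n+2$, the constant $1$ together with the $n+1$ Euclidean coordinates $x_1,\dots,x_{n+1}$ of the $\RR^{n+1}\subset\RR^N$ containing the tangent cylinder $\Sigma$ are linearly independent caloric functions in $\cP_1$: they are linear and hence $(\partial_t-\Delta_{M_t})$-annihilated and of linear growth, and linear independence on $M_t$ follows by rescaling by $\sqrt{-t}$ and passing to the limit at $-\infty$, where any non-trivial relation would descend to $\Sigma$ and contradict the fact that $\Sigma$ spans $\RR^{n+1}$.

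For the upper bound, after rotating so that the tangent cylinder lies in $\mathrm{span}\{x_1,\dots,x_{n+1}\}$, it suffices to show that each coordinate $x_j$ with $j>n+1$ vanishes identically on $M_t$; this forces $M_t\subset\RR^{n+1}$ and hence $\cP_1\subset\mathrm{span}\{1,x_1,\dots,x_{n+1}\}|_{M_t}$ has dimension $\le n+2$. Fix such a $j$ and set $F(t)=I_{x_j}(t)$. For each $t_1\ll 0$ along the subsequence realizing the tangent flow, define
\begin{equation*}
v_{t_1}\;=\;x_j-P_{t_1}x_j,
\end{equation*}
where $P_{t_1}$ is the $J_{t_1}$-orthogonal projection of $x_j$ onto $\mathrm{span}\{1,x_1,\dots,x_{n+1}\}$. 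Then $v_{t_1}$ is caloric (a linear combination of coordinate and constant functions, with $t_1$-independent structure once $t_1$ is fixed) and is $J_{t_1}$-orthogonal to $\{1,x_1,\dots,x_{n+1}\}$ by construction, satisfying the hypotheses of Lemma \ref{l:atleastquad} at time $t_1$. The cylindrical tangent-flow hypothesis supplies, as $t_1\to-\infty$: (i) $F(t)/(-t)\to 0$, since $y_j\equiv 0$ on $\Sigma$ and $I_{x_j}(t)=-t\cdot F_t(y_j^2)$ after rescaling; (ii) the coefficients of $P_{t_1}x_j$ decay sufficiently fast in $t_1$ that $I_{v_{t_1}}(t)=F(t)(1+o(1))$ uniformly on $[t_1,\alpha t_1]$; and (iii) the error term $|\kappa|$ in Lemma \ref{l:atleastquad} tends to zero, because each $I_1$ and each $I_{x_i}(t)/t$ has a well-defined limit at $-\infty$ by Huisken monotonicity applied to the rescaled flow approaching its self-similar cylindrical limit.

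Fix $\mu\in(0,1/4)$ and $\alpha\in(0,1)$ small. Applying Lemma \ref{l:atleastquad} to $u=v_{t_1}/\sqrt{I_{v_{t_1}}(t_1)}$ on $[t_1,\alpha t_1]$ and undoing the normalization gives, using (ii) and (iii),
\begin{equation*}
F(\alpha t_1)\;\le\;\bigl(\alpha^{2-2\mu}+o(1)\bigr)F(t_1)\qquad\text{as }t_1\to-\infty.
\end{equation*}
Iterating this relation along the geometric sequence $t^{(k)}=\alpha^k t_0$ for $k=0,\dots,K$ (using at each step the fresh projection $v_{t^{(k)}}$, whose orthogonality holds at the corresponding $t^{(k)}$), and invoking the $\cP_1$ growth bound $F(t_0)\le C(1+|t_0|)$ from Lemma \ref{l:iprimelem}, one obtains after the substitution $s=\alpha^K t_0$ the estimate
\begin{equation*}
F(s)\;\le\;C|s|\,\alpha^{K(1-2\mu)}\;+\;(\text{accumulated error}).
\end{equation*}
Since $\mu<1/2$ the main term vanishes as $K\to\infty$; choosing $\alpha$ small enough that the per-step multiplicative error is dominated by $\alpha^{2-2\mu}$ and $|s|$ large enough that every $|t^{(k)}|$ is in the regime where (ii) and (iii) apply, the accumulated error also vanishes, yielding $F(s)=0$ for some (hence all large) $|s|$. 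Since mean curvature flow preserves the property of being contained in a linear subspace (as $\bH\in\Pi$ whenever $M_t\subset\Pi$), we conclude $x_j\equiv 0$ on the entire flow, completing the upper bound.

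The main obstacle is the quantitative verification of (ii) and (iii) and the uniform control of the accumulated error throughout the iteration: each hinges on the rate at which $M_t/\sqrt{-t}$ converges to $\Sigma$ in $C^1$ on expanding balls, which the tangent-flow hypothesis supplies by means of Huisken's monotonicity combined with the Ilmanen--White compactness of rescaled flows with bounded entropy.
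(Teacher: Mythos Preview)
Your reduction for the upper bound contains a genuine logical gap. You claim that showing $x_j\equiv 0$ for $j>n+1$, and hence $M_t\subset\RR^{n+1}$, forces $\cP_1\subset\mathrm{span}\{1,x_1,\dots,x_{n+1}\}|_{M_t}$. This inference is unjustified: $\cP_1$ is by definition the space of \emph{all} caloric functions of at-most-linear growth on the flow, not the space of restrictions of affine functions from $\RR^N$. Knowing that $M_t$ is a hypersurface in some $\RR^{n+1}$ says nothing, a priori, about whether there exist ``exotic'' caloric functions in $\cP_1$ that are not linear combinations of coordinates. Indeed, the statement $\dim\cP_1\le n+2$ for a hypersurface flow with cylindrical tangent at $-\infty$ is exactly the codimension-one case of the theorem you are trying to prove; the argument does not trivialize in that case. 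So your ``hence'' is circular.

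The paper's proof avoids this by never attempting to identify $\cP_1$ with linear functions. Instead it argues by contradiction with arbitrary $u_0\equiv1,u_1,\dots,u_{n+2}$ linearly independent in $\cP_1$. The key device you are missing is Lemma~\ref{l:goodm}: it produces, at a sequence of scales $m_q\to\infty$, a $J$-orthonormal basis $v_1,\dots,v_{n+2}$ with the lower bound $\sum_i I_{v_i}(-\Omega^{m_q})\ge(n+2)\,\Omega^{-1-\mu}$. After an orthogonal change so that $v_{n+2}$ is $J$-orthogonal to the cylinder's coordinates at that single scale, one applies Lemma~\ref{l:atleastlinear} to $v_1,\dots,v_{n+1}$ and Lemma~\ref{l:atleastquad} to $v_{n+2}$, obtaining the upper bound $(n+1)\,\Omega^{\mu-1}+\Omega^{2\mu-2}+o(1)$. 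For suitable $\Omega>1$ and small $\mu$ this contradicts the lower bound. The point is that Lemma~\ref{l:goodm} supplies exactly the normalization and doubling control that your iteration lacks for a general $u\in\cP_1$; your estimates (i) and (ii) rely on the special feature $I_{x_j}(t)/(-t)\to 0$, which has no analogue for an abstract element of $\cP_1$.

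A secondary issue: you fix the subspace $\RR^{n+1}$ of a single tangent cylinder and then iterate over many scales, but Lemma~\ref{p:ancientcyl} only guarantees closeness to \emph{some} rotated cylinder at each scale, and the rotation may vary. The paper sidesteps this by working at one scale $m_q$ at a time and letting the cylinder $\Sigma_q$ depend on $q$.
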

   
  By a cylinder, we mean a multiplicity one cylinder.  
The space $\cP_1 (M_t)$ always includes the constant function and the linearly independent coordinate functions, so  $\dim \cP_1 (M_t)$  is 
     $(n+1)$ for an $n$-plane  and at least $(n+2)$ otherwise.  The point is to use the asymptotic cylindrical structure    to prove $\dim \cP_1(M_t) \leq n+2$.
   
       \vskip1mm
We will need   the following uniqueness of blowup type for $M_t$:

   \begin{Lem}	\label{p:ancientcyl}
Suppose that $ \SS^k_{\sqrt{2k}} \times \RR^{n-k}$  is a  tangent flow at $-\infty$.
   Given  $\epsilon > 0$  and $\Lambda > 1$, there exists $T< 0$ so that if $t_0< T$, then there is a rotation $\cR$ of $\RR^N$ so that  
   $B_{\Lambda} \cap \frac{M_t}{\sqrt{-t}}  $ is a graph over $\cR \left( \SS^k_{\sqrt{2k}} \times \RR^{n-k} \right)$ with $C^{2}$ norm at most $\epsilon$ for every $t\in [ \Lambda^2 \, t_0 , t_0]$.
   \end{Lem}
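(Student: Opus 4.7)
My plan is to work in rescaled time and argue by contradiction, combining Huisken's monotonicity, parabolic compactness of rescaled MCFs, and the cylindrical rigidity of Theorem \ref{t:rigidity}.

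First I would pass to $\tau=-\log(-t)$ and the rescaled flow $\tilde M_\tau:=M_t/\sqrt{-t}$; under this change the interval $t\in[\Lambda^2 t_0,t_0]$ becomes the fixed-length window $\tau\in[\tau_0-2\log\Lambda,\tau_0]$ with $\tau_0=-\log(-t_0)$, so it suffices to produce a rotation $\cR$ making $\tilde M_\tau$ an $\epsilon$-$C^2$ graph over $\cR(\cC)$ on $B_\Lambda$ uniformly in $\tau$ in the window (write $\cC:=\SS^k_{\sqrt{2k}}\times\RR^{n-k}$). Huisken's monotonicity for the rescaled MCF gives that $F(\tilde M_\tau)$ is nonincreasing in $\tau$ with $\frac{d}{d\tau}F(\tilde M_\tau)=-\int|\phi|^2\,e^{-f}$ where $\phi=\bH+x^\perp/2$. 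The tangent-flow hypothesis furnishes a sequence $\tau_i\to-\infty$ along which $\tilde M_{\tau_i}$ converges smoothly on compact subsets to some rotation of $\cC$, so $F(\tilde M_{\tau_i})\to F(\cC)$ and, by monotonicity, $\lim_{\tau\to-\infty}F(\tilde M_\tau)=F(\cC)$.

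Suppose toward contradiction the conclusion fails: there are $\epsilon_0>0$, $\Lambda_0>1$, and $\tau_0^j\to-\infty$ so that no single rotation $\cR$ satisfies the graphical bound throughout the window about $\tau_0^j$. Form the shifted flows $\tilde N^j_s:=\tilde M_{\tau_0^j+s}$, $s\in[-2\log\Lambda_0,0]$. The entropy bound $\lambda(M_t)\le\lambda_0$ together with standard parabolic regularity for the rescaled MCF (Brakke, Ecker, White) yields smooth subsequential convergence on compact space-time sets to a limiting rescaled MCF $\tilde N^\infty$. Since $F(\tilde N^j_s)=F(\tilde M_{\tau_0^j+s})\to F(\cC)$ for every fixed $s$, the limit satisfies $F(\tilde N^\infty_s)\equiv F(\cC)$, and the equality case of Huisken's formula forces $\phi\equiv 0$ along $\tilde N^\infty$; thus $\tilde N^\infty_s=S$ is a fixed shrinker with $F(S)=F(\cC)$.

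The main obstacle is to show $S\in\cC_{n,N}$. My approach is to verify that $S$ is $C^{2,\alpha}$-close on $B_{R_N}$ to a rotated cylinder and then invoke Theorem \ref{t:rigidity}. For this I pick, for each $j$, a good time $\tau_{i(j)}$ from the sequence $\tau_i$ with $\tau_{i(j)}\ge\tau_0^j$ and $\tau_{i(j)}\to-\infty$; the total entropy drop between $\tau_0^j+s$ and $\tau_{i(j)}$ equals $\int|\phi|^2 e^{-f}\,d\tau$ and is bounded above by $F(\tilde M_{\tau_0^j+s})-F(\tilde M_{\tau_{i(j)}})\to 0$. Since $\tilde M_{\tau_{i(j)}}$ is smoothly close to some rotated cylinder on $B_{R_N}$ and the rescaled flow has arbitrarily small integrated shrinker-error on $[\tau_0^j+s,\tau_{i(j)}]$, combining the evolution equation $\partial_\tau x=-\phi$ with local parabolic estimates transports $C^{2,\alpha}_{\text{loc}}$-closeness from $\tau_{i(j)}$ back to $\tau_0^j+s$, so the limit $S$ inherits $C^{2,\alpha}$-closeness to a rotated cylinder on $B_{R_N}$. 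Theorem \ref{t:rigidity} then identifies $S$ as a rotated cylinder. Once $S\in\cC_{n,N}$, the smooth convergence $\tilde N^j_s\to S$ on $B_{\Lambda_0}$ uniformly in $s$ yields, for $j$ large, a single rotation $\cR$ (the one sending $\cC$ to $S$) that realizes the required graphical bound on the entire window, contradicting the choice of $\tau_0^j$.
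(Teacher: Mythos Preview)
Your overall strategy---contradiction, parabolic compactness of the rescaled flow, and an appeal to the rigidity Theorem~\ref{t:rigidity}---is exactly what the paper's one-line proof (citing rigidity, White's curvature estimate, and \cite{CIM}) is invoking. The gap is in your step~6.

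The interval $[\tau_0^j+s,\tau_{i(j)}]$ has no length bound: you only require $\tau_{i(j)}\ge\tau_0^j$ and $\tau_{i(j)}\to-\infty$, and nothing prevents $\tau_{i(j)}-\tau_0^j\to\infty$. Small integrated shrinker-error
\[
\int_{\tau_0^j+s}^{\tau_{i(j)}}\int_{\tilde M_\tau}|\phi|^2\,\e^{-f}\,d\tau\;\longrightarrow\;0
\]
over an interval of diverging length does \emph{not} control displacement of the flow. The rescaled MCF is the gradient flow of $F$, and a gradient flow can travel arbitrarily far along a nearly-flat valley while losing arbitrarily little energy. So the sentence ``combining $\partial_\tau x=-\phi$ with local parabolic estimates transports $C^{2,\alpha}_{\rm loc}$-closeness from $\tau_{i(j)}$ back to $\tau_0^j+s$'' is not justified. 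A priori your limit $S$ is only some shrinker with $F(S)=F(\cC)$, and in high codimension there are many such shrinkers that are nowhere near cylindrical.

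The argument the paper has in mind (this is the content of the reference to corollary~0.3 in \cite{CIM}) replaces transport by an intermediate-value step. Fix $\delta>0$ smaller than both $\epsilon_0$ and the rigidity threshold $R_N^{-1}$; White's curvature estimate makes the map $\tau\mapsto\tilde M_\tau$ vary continuously in $C^{2,\alpha}_{\rm loc}$ once one is near a multiplicity-one cylinder, so the distance $d(\tau)$ from $\tilde M_\tau$ to the nearest rotated cylinder on $B_{R_N}$ is continuous there. Along the good sequence $d(\tau_i)\to 0$, while along the bad sequence $d(\tau_0^j)\ge\epsilon_0>\delta$; continuity then produces $\sigma_j\to-\infty$ with $d(\sigma_j)=\delta$ exactly. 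Passing to a subsequential limit along the $\sigma_j$ gives a shrinker $S$ with $d(S)\le\delta<R_N^{-1}$, so Theorem~\ref{t:rigidity} forces $S\in\cC_{n,N}$; but then $d(\sigma_j)\to 0$, contradicting $d(\sigma_j)=\delta$. This yields $d(\tau)\to 0$ as $\tau\to-\infty$, after which your final paragraph applies verbatim.
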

   
   \begin{proof}
This follows from the rigidity of the cylinder of Theorem \ref{t:rigidity} and White's curvature estimate \cite{W1} (cf. corollary $0.3$ in \cite{CIM}).
   \end{proof}

\begin{proof}[Proof of Theorem \ref{t:ancientP1}]
We will get a contradiction if 
 $u_0 \equiv 1, \, u_1 , \dots , u_{n+2}$ are linearly independent functions in $\cP_1 (M_t)$.   
  
Given $\mu > 0$ and $\Omega > 1$,   Lemma \ref{l:goodm} with $d=1$ gives
    $m_{q} \to \infty$ so that $v_1 , \dots , v_{n+2}$ defined by $v_i = \frac{ w_{i,-\Omega^{m_q +1}}}{ \sqrt{f_i (-\Omega^{m_q +1})}}$ satisfy
  \begin{align}	\label{e:goodmPP}
   J_{-\Omega^{m_q+1}} (v_i , v_j) = \delta_{ij}  
   {\text{ and 
}}
 	\sum_{i=1}^{n+2} I_{v_i} (-\Omega^{m_q}) \geq(n+2) \, \Omega^{-1-\mu} \, .
 \end{align}
For $m_q$ sufficiently large,  Lemma \ref{p:ancientcyl} gives that $\frac{M_t}{\sqrt{-t}}$ is as close as we want to a cylinder $\Sigma_q$ (a priori, the cylinder can vary with $q$).  Let $x_1 , \dots , x_{n+1}$ be the coordinate functions for the cylinder $\Sigma_q$.  Make an orthogonal change of basis of    $v_1 , \dots , v_{n+2}$ so that 
\begin{align}	\label{e:vnplus2}
	J_{-\Omega_{m_q+1}} (v_{n+2} , x_i) = 0 {\text{ for }} i = 1, \dots , n+1 \, .
\end{align}
Since trace is invariant under orthogonal changes, \eqr{e:goodmPP} still holds.  

Every $v_i$ is $J_{-\Omega^{m_q+1}}$-orthogonal to the constants for $i\geq 1$.  Thus, given $\mu \in (0, 1/2)$, then for every $m_q$ sufficiently large we can 
apply Lemma \ref{l:atleastlinear}
to get that  
  \begin{align}	\label{e:linear22}
  	I_{v_i} (-\Omega^{m_q}) \leq  \Omega^{\mu-1} + 2 \,  \kappa_{0}  \, .
\end{align}
However, $v_{n+2}$ is also orthogonal to the $x_i$'s and, thus,     the stronger
Lemma \ref{l:atleastquad}
gives
  \begin{align}	\label{e:betternp2}
  	I_{v_{n+2}} (-\Omega^q) \leq \Omega^{2\mu-2} + C_n' \,\left( 2 + \mu^{-1} \right)  \, \lambda_0^2 \, \sqrt{|\kappa |}  \, \Omega^2 \,  .
\end{align}
Using \eqr{e:linear22} for $i\leq n+1$ and \eqr{e:betternp2} for $i=n+2$ gives
  \begin{align}	\label{e:goodmPP1}
 	\sum_{i=1}^{n+2} I_{v_i} (-\Omega^{m_q}) \leq (n+1) \, \Omega^{\mu-1} + 2(n+1) \kappa_0 + \Omega^{2\mu-2} +C_n' \,\left( 2 + \mu^{-1} \right)  \, \lambda_0^2 \, \sqrt{|\kappa |}  \, \Omega^2 \, .
 \end{align}
Combining this with the lower bound \eqr{e:goodmPP} gives
\begin{align}	\label{e:hereIScon}
	(n+2) \, \Omega^{-1-\mu}  \leq  (n+1) \, \Omega^{\mu-1} + 2(n+1) \, |\kappa| + \Omega^{2\mu-2} +C_n' \,\left( 2 + \frac{1}{\mu} \right)  \, \lambda_0^2 \, \sqrt{|\kappa |}  \, \Omega^2   \, .
\end{align}
This gives a contradiction. To see this, fix any $\Omega > 1$ and then choose $\mu > 0$ small so that
\begin{align}	\label{e:heregg}
	(n+2) \, \Omega^{-1-\mu}  > (n+1) \, \Omega^{\mu-1} + \Omega^{2\mu-2}     \, .
\end{align}
Then take $q$ large enough so that $|\kappa|$ is small enough to contradict  \eqr{e:hereIScon}.
  \end{proof}

\end{document}